\newlength\bshft
\def\fakebold#1{\ThisStyle{\ooalign{$\SavedStyle#1$\cr%
  \kern-\bshft$\SavedStyle#1$\cr%
  \kern\bshft$\SavedStyle#1$}}}
\newtheorem{thm}{Theorem}[section]
\newtheorem{cor}[thm]{Corollary}
\newtheorem{defn}[thm]{Definition}
\newtheorem{prop}[thm]{Proposition}
\newtheorem{conj}[thm]{Conjecture}
\newtheorem{exa}[thm]{Example}
\newtheorem{obs}[thm]{Observation}
\newtheorem*{theorem*}{Theorem \ref{TorusLinksOnly2Torsion}}
\newtheorem*{theorem'}{Theorem \ref{nooddtorsion}}
\newtheorem*{theorem"}{Theorem \ref{GeneralTheorem}}
\newcommand{\R}{\mathbb{R}}
\newcommand{\Z}{\mathbb{Z}}
\newcommand{\Q}{\mathbb{Q}}
\newcommand{\C}{\mathbb{C}}
\newcommand{\MC}{\textnormal{MC}}
\newcommand{\rk}{\textnormal{rk }}
\newcommand{\rank}{\textnormal{rank}}
\newcolumntype{P}[1]{>{\centering\arraybackslash}p{#1}} 
\definecolor{forest}{rgb}{0.03, 0.47, 0.19}
\newsavebox\squaregraph
\newsavebox\twosimplex
\newsavebox\YKtwosimplex
\title{Torsion in the Magnitude Homology of Graphs}
\author{Radmila Sazdanovic and Victor Summers}
\begin{document}

\maketitle

\begin{abstract}
Magnitude homology is a bigraded homology theory for finite graphs defined by Hepworth and Willerton, categorifying the power series invariant known as magnitude which was introduced by Leinster. We analyze the structure and implications of torsion in magnitude homology. 
We show that any finitely generated abelian group may appear as a subgroup of the magnitude homology of a graph, and, in particular, that torsion of a given prime order can appear in the magnitude homology of a graph and that there are infinitely many such graphs. Finally, we provide complete computations of magnitude homology of outerplanar graphs and focus on the ranks of the groups along the main diagonal of magnitude homology.
\end{abstract}

\section{Introduction}
Magnitude is an invariant of enriched categories introduced by Tom Leinster \cite{leinster2013magnitudematricspaces}. A finite graph may be viewed as a generalized metric space, which, in turn, may be viewed as an enriched category. The resulting magnitude invariant of finite graphs takes the form of a single-variable power series \cite{leinster2019magnitude}. Magnitude homology is a bigraded homology theory for finite graphs, introduced by Hepworth and Willerton \cite{hepworth2017categorifying}, that categorifies magnitude in the sense that its graded Euler characteristic is the magnitude invariant. Hepworth and Willerton initially conjectured that torsion is not to be found in magnitude homology groups, but this was subsequently shown to be false by Kaneta and Yoshinaga \cite{YoshinagaKaneta2018}. The structure of the magnitude homology and the existence and types of torsion are the primary focus of this paper.

We show that any finitely generated abelian group may appear as a subgroup of the magnitude homology of a graph, and, in particular, that torsion of a given prime order can appear in the magnitude homology of a graph and that there are infinitely many such graphs. Finally, we provide  complete computations of magnitude homology of outerplanar graphs and focus on the ranks of the groups along the main diagonal of magnitude homology building on and contributing to results in \cite{hepworth2017categorifying, hepworth2018magnitude, YoshinagaKaneta2018, gu2018graph} and others.

This paper is organized as follows. In Section \ref{Background} we recall the constructions of magnitude and magnitude homology along with their basic properties. In Section \ref{TorsionInMagnitudeHomology} we review the current state of knowledge of torsion in magnitude homology, show that torsion of arbitrary prime order can be found in magnitude homology (Theorem \ref{ptorsioninMH}), and construct infinite families of graphs with a given prime order in magnitude homology (Theorems \ref{InfinitelyManypTorsion} and \ref{InfinitelyMany2Torsion}). Finally, in Section \ref{ComputationsAndMainDiagonal} we extend Gu's computations \cite{gu2018graph} of the magnitude homology groups of cycle graphs to a family of outerplanar graphs (Theorem \ref{Evenouterplanarconjecture}), compute the main diagonal of graphs with no induced cycles of length $3$ or $4$ (Theorem \ref{nosquaresortriangles}). In Section \ref{lastsec} we put forth several conjectures on the structure of the main diagonal for other families of graphs based on calculations performed in Python code by R. Hepworth and S. Willerton.

\section*{Acknowledgements}
We thank Tye Lidman and Simon Willerton for helpful conversations.  RS was partially supported by the Simons Collaboration Grant 318086  and NSF DMS $1854705$.

\section{Background on magnitude and magnitude homology}\label{Background}

A graph is a pair $G=(V,E)$ where $V$ is a set of vertices and $E$ is a set of unordered pairs of vertices, which we think of as a set of undirected edges between vertices. For the purposes of defining magnitude and magnitude homology, we assume all graphs to have no loops and no double edges \cite{leinster2019magnitude}. Such a graph $G$ may be viewed as an extended metric space (a metric space with infinity allowed as a distance) whose points are the vertices of $G$ by declaring each edge to be of length one and defining an extended metric $d : V \times V \rightarrow [0,\infty]$ by setting $d(x,y) $ to be equal to the length of a shortest path in $G$ from $x$ to $y$:

 $ \min\left\{ d(x,x_1) + \displaystyle{\sum_{i=1}^{k-2} d(x_i,x_{i+1})} + d(x_{k-1},y): \{x,x_1\}, \{x_{k-1},y\}, \{x_i,x_{i+1}\} \in E \ \textnormal{for} \ 1 \le i \le k-2 \right\}$

 By convention we let $d(x,y)=\infty$ if there is no path from $x$ to $y$. For example, in the cycle graph $C_6$ shown in Figure \ref{C6K4} $d(a_1,a_5)=2$ and $d(a_0,a_3)=3$, while the distance between any two distinct vertices of complete graph $K_4$ is $1$.

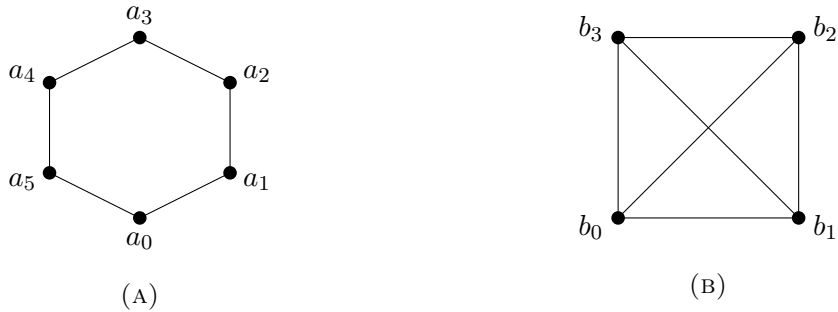
\begin{figure}[H]
\begin{subfigure}{0.45\textwidth}
\centering
\begin{center}
\begin{tikzpicture}[scale=.6]

\fill (0,2) circle (1.5mm);
\node at (-.6,1.8){$a_5$};
\fill (0,4) circle (1.5mm);
\node at (-.6,4.2){$a_4$};
\fill (2,1) circle (1.5mm);
\node at (2,.5){$a_0$};
\fill (4,2) circle (1.5mm);
\node at (4.6,1.8){$a_1$};
\fill (4,4) circle (1.5mm);
\node at (4.6,4.2){$a_2$};
\fill (2,5) circle (1.5mm);
\node at (2,5.5){$a_3$};
\draw (0,2)--(0,4);
\draw (0,4)--(2,5);
\draw (2,5)--(4,4);
\draw (4,4)--(4,2);
\draw (4,2)--(2,1);
\draw (2,1)--(0,2);
\end{tikzpicture}
\end{center}
\caption{}
\end{subfigure}
\begin{subfigure}{0.45\textwidth}
\begin{center}
\begin{tikzpicture}[scale=.6]
\fill (9,1) circle (1.5mm);
\node at (8.4,.8){$b_0$};
\fill (13,1) circle (1.5mm);
\node at (13.6,.8){$b_1$};
\fill (13,5) circle (1.5mm);
\node at (13.6,5.2){$b_2$};
\fill (9,5) circle (1.5mm);
\node at (8.4,5.2){$b_3$};
\draw (9,1)--(13,1);
\draw (13,1)--(13,5);
\draw (13,5)--(9,5);
\draw (9,5)--(9,1);
\draw (9,1)--(13,5);
\draw (9,5)--(13,1);
\end{tikzpicture}
\end{center}
\caption{}
\end{subfigure}
\caption{(a) The cycle graph $C_6$ on six vertices. (b) The complete graph $K_4 \ $on four vertices.}
    \label{C6K4}
\end{figure}

\begin{defn}[\cite{leinster2019magnitude}]
Let $G$ be a graph with (ordered) vertex set $V = \{ v_1,v_2,\ldots,v_n \}$. The similarity matrix of $G$ is the $n \times n$ square matrix with entries in the polynomial ring $\ \Z[q]$ given by
\[ Z_G(q) = 
\begin{bmatrix}
q^{d(v_1,v_1)} & q^{d(v_1,v_2)} & \dots & q^{d(v_1,v_n)}\\
q^{d(v_2,v_1)} & \ddots & & q^{d(v_2,v_n)}\\
\vdots & & & \vdots\\
q^{d(v_n,v_1)} & q^{d(v_n,v_2)} & \dots & q^{d(v_n,v_n)}
\end{bmatrix}
\]
where it is understood that $q^{\infty}=0$.
\end{defn}

Since $Z_G(0)\ $ is the identity matrix, the polynomial $\det(Z_G(q))$ has constant term $1$. Consequently, $\det(Z_G(q))$ is invertible in the ring $\Z\llbracket q \rrbracket$ of power series in the variable $q$. This allows for the following definition due to Leinster.

\begin{defn}[\cite{leinster2019magnitude}]
Let $G$ be a graph and $Z_G(q)$ its similarity matrix. The magnitude of a graph $G$, denoted by $\#G = \#G(q)$, is the sum of all entries of the inverse matrix $Z_G(q)^{-1}$.
\end{defn}

The magnitude invariant has many cardinality-like properties \cite{Leinster2008EulerChar}. For example, the magnitude of a graph $G$ with no edges is precisely its cardinality as a set: $\#G = |V(G)|$. Magnitude is also multiplicative with respect to Cartesian products and, under fairly mild conditions, also satisfies an inclusion-exclusion formula \cite{leinster2019magnitude}. Another angle from which cardinality-like properties can be seen is the magnitude function. The magnitude function $f_G(t)$ is the partially defined function of the extended real numbers obtained by setting $q=e^{-t}$. Although this function may have singularities, it is known to be increasing for large enough $t$ and satisfies $\displaystyle\lim_{t \rightarrow \infty} f_G(t) = |V(G)|$. These observations and their full details can be found in \cite{Leinster2013Asymptotic,leinster2019magnitude}.

Due to the alternating nature of the formula for Euler characteristic, an alternating sum formula for an invariant is often seen as a potential starting point for a categorification, i.e. can we build a chain complex whose graded Euler characteristic is the invariant in question? In the case of magnitude, it was the alternating sum formula of Proposition \ref{alternatingformula} that was the starting point of Hepworth and Willerton's construction of magnitude homology, a categorification of the magnitude power series.

\begin{prop}[\cite{leinster2019magnitude}, Proposition 3.9] \label{alternatingformula}
For any graph $G$,

\[
\#G(q) = \sum_{k=0}^{\infty} (-1)^k \sum _{x_0 \neq x_1 \neq \cdots \neq x_k} q^{d(x_0,x_1) + d(x_1,x_2) + \cdots + d(x_{k-1},x_k)}
\]

where the $x_i$ denote vertices of $G$. That is, if $\#G(q) = \sum_{n=0}^{\infty} c_{\ell} q^{\ell}$, then the $c_{\ell}$ are given by

\[
    c_{\ell} = \sum_{k=0}^{\ell} (-1)^k |  \{ (x_0,x_1,\ldots,x_k) : x_0 \neq x_1 \neq \cdots \neq x_k, \ d(x_0,x_1) + d(x_1,x_2) + \cdots + d(x_{k-1},x_k) = \ell \} |
\]
\end{prop}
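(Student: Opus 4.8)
The plan is to expand the inverse similarity matrix as a matrix geometric (Neumann) series and then sum the entries term by term. Write $Z_G(q) = I + H$, where $I$ is the $n \times n$ identity matrix and $H = H(q)$ is the matrix with $H_{ij} = q^{d(v_i,v_j)}$ for $i \neq j$ and $H_{ii} = 0$; this uses only that $d(v_i,v_i)=0$ and that $d(v_i,v_j) \ge 1$ whenever $i \neq j$ (any path between distinct vertices traverses at least one edge), with the convention $q^{\infty} = 0$ for vertices in different components. Since every entry of $H$ is divisible by $q$, every entry of $H^k$ is divisible by $q^k$, so $H^k \to 0$ in the $q$-adic topology on $M_n(\Z\llbracket q \rrbracket)$ and the series $\sum_{k=0}^{\infty} (-1)^k H^k$ converges there; multiplying by $I+H$ on either side telescopes, so $Z_G(q)^{-1} = \sum_{k=0}^{\infty} (-1)^k H^k$.

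Next I would apply the ``sum of all entries'' map $s : M_n(\Z\llbracket q \rrbracket) \to \Z\llbracket q \rrbracket$, which is $\Z\llbracket q \rrbracket$-linear and continuous, hence commutes with the convergent series: $\#G(q) = s(Z_G(q)^{-1}) = \sum_{k=0}^{\infty} (-1)^k s(H^k)$. It remains to identify $s(H^k)$. Expanding the matrix product, $(H^k)_{i_0 i_k} = \sum H_{i_0 i_1} H_{i_1 i_2} \cdots H_{i_{k-1} i_k}$, the sum ranging over all intermediate indices $i_1, \ldots, i_{k-1}$; because $H$ vanishes on the diagonal, the only nonzero terms are those with $i_0 \neq i_1 \neq \cdots \neq i_k$, and such a term equals $q^{d(v_{i_0},v_{i_1}) + \cdots + d(v_{i_{k-1}},v_{i_k})}$. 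Summing over $i_0$ and $i_k$ as well gives $s(H^k) = \sum_{x_0 \neq x_1 \neq \cdots \neq x_k} q^{d(x_0,x_1) + \cdots + d(x_{k-1},x_k)}$, which is exactly the inner sum in the statement. This proves the first displayed formula, once one checks the right-hand side is a well-defined power series: for fixed $\ell$, a tuple of length $k+1$ contributes to the coefficient of $q^{\ell}$ only if $d(x_0,x_1)+\cdots+d(x_{k-1},x_k) = \ell$, and since each summand is at least $1$ this forces $k \le \ell$, leaving finitely many tuples.

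That last observation also yields the formula for $c_{\ell}$: collecting the coefficient of $q^{\ell}$ on both sides, the outer sum over $k$ may be truncated at $k = \ell$, and within it one is counting precisely the tuples $(x_0,\ldots,x_k)$ with $x_0 \neq \cdots \neq x_k$ and $d(x_0,x_1) + \cdots + d(x_{k-1},x_k) = \ell$, weighted by the sign $(-1)^k$.

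I do not expect a genuine obstacle here; the argument is essentially the matrix geometric series. The only points requiring care are topological --- justifying convergence of $\sum_{k=0}^{\infty}(-1)^k H^k$ in $M_n(\Z\llbracket q \rrbracket)$ and the interchange of $s$ with the infinite sum --- together with the combinatorial bookkeeping that the vanishing diagonal of $H$ is exactly what enforces the constraint $x_0 \neq x_1 \neq \cdots \neq x_k$.
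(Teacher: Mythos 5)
The paper does not prove this proposition itself---it is quoted from Leinster \cite{leinster2019magnitude}---but your argument is correct and is essentially the standard proof given there: write $Z_G(q)=I+H$ with $H$ vanishing on the diagonal and divisible by $q$, expand $Z_G(q)^{-1}=\sum_{k\ge 0}(-1)^kH^k$ as a $q$-adically convergent Neumann series, and sum entries, the zero diagonal of $H$ being exactly what enforces $x_0\neq x_1\neq\cdots\neq x_k$. No gaps.
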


Next, recall Hepworth and Willerton's construction \cite{hepworth2017categorifying} of the magnitude homology groups of a graph, along with some of its most notable properties.

\begin{defn}
A $k$-path in $G$ is a $(k+1)$-tuple $(x_0,x_1,\ldots,x_k)$ of vertices in $G$ with $x_i \neq x_{i+1}$ and $d(x_i,x_{i+1}) < \infty$ for each $0 \le i \le k-1$. The length of a $k$-path $(x_0,x_1,\ldots,x_k)$ in $G$ is
\[ \ell(x_0,x_1,\ldots,x_k) = d(x_0,x_1) + d(x_1,x_2) + \cdots d(x_{k-1},x_k). \]
\end{defn}

\begin{defn}[Magnitude chain complex, \cite{hepworth2017categorifying}]
Let $G$ be a graph. Let $\MC(G) = \bigoplus_{k,\ell \ge 0} \MC_{k,\ell}(G)$ be the bigraded $\Z$-module with components
\[
\MC_{k,\ell}(G) := \Z\{ \textbf{x} = (x_0,x_1,\ldots,x_k) : x_0 \neq x_1 \neq \ldots \neq x_k, \ \ell(\textbf{x})=\ell) \}
\]
That is, $\MC(G)$ is generated in bigrading $(k,\ell)$ by all $k$-paths in $G$ of length $\ell$. For $1 \le i \le k-1$, define maps $\partial_i : \MC_{k,\ell}(G) \rightarrow \MC_{k-1,\ell}(G)$ by
\[
\partial_i( x_0,x_1,\ldots,x_k) = \delta^{\ell,\ell(x_0,\ldots,x_{i-1},x_{i+1},\ldots,x_k)} (x_0,\ldots,x_{i-1},x_{i+1},\ldots,x_k)
\]
That is, $\partial_i$ removes vertex $x_i$ if the length of the path is preserved, and is the zero map otherwise. Defining maps $\partial : \MC_{k,\ell} \rightarrow \MC_{k-1,\ell}$ by $\partial = \sum_{i=1}^{k-1} (-1)^{i} \partial_i$
we find that $(\MC_{*,\ell},\partial)$ forms a chain complex for each $\ell \ge 0$. The magnitude homology of $G$ is the bigraded $\Z$-module  $\textnormal{MH}(G)$ given in bigrading $(k,\ell)$ by
\[
\textnormal{MH}_{k,\ell}(G) := H_k(\MC_{*,\ell}(G)).
\]
\end{defn}

Let us recall some basic properties of magnitude homology. First, magnitude homology categorifies the magnitude invariant in the sense that the magnitude of a graph $G$ may be recovered as the graded Euler characteristic of its magnitude homology:

\begin{align}
\chi_q(\MC(G))
&= \sum_{\ell \ge 0} \left( \sum_{k \ge 0} (-1)^k \ \rk(\textnormal{MH}_{k,\ell}(G)) \right)\cdot q^{\ell}\notag\\
&= \sum_{\ell \ge 0} \left( \sum_{k \ge 0} (-1)^k \ \rk(\MC_{k,\ell}(G)) \right)\cdot q^{\ell}\label{line2}\\
&= \#G(q)\label{line3}.
\end{align}

Line (\ref{line2}) holds because the differential preserves the second grading. i.e. is degree $0$, while line (\ref{line3}) follows by Proposition \ref{alternatingformula}. Second, magnitude homology is strictly stronger than magnitude; in \cite{gu2018graph}, Gu shows that the Rook(4,4) and Shrikhande graphs have the same magnitude but non-isomorphic magnitude homology groups. Third, the multiplicativity of magnitude with respect to Cartesian products of graphs lifts to a K\"{u}nneth sequence in magnitude homology \cite{hepworth2017categorifying}.

\begin{defn}[\cite{hepworth2017categorifying}]
The Cartesian product $G_1 \Box G_2$ of graphs $G_1$ and $G_1$ has vertex set $V(G_1) \times V(G_2)$ and an edge between vertices $(x_1,x_2)$ to $(y_1,y_2)$ if either $x_1=y_1$ and there is an edge in $G_2$ between $x_2$ and $y_2$ in $G_2$, or $x_2=y_2$ and there is an edge between $x_1$ and $y_1$ in $G_1$.
\end{defn}

\begin{thm}[\cite{hepworth2017categorifying}]
Let $G_1$ and $G_2$ be graphs. Then, magnitude satisfies the formula
\begin{equation}\label{Multiplicativity}
\#(G_1 \Box G_2) = \#G_1 \cdot \#G_2
\end{equation}
\end{thm}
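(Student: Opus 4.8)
The plan is to reduce the multiplicativity formula \eqref{Multiplicativity} to a purely linear‑algebraic statement about Kronecker products of similarity matrices. The one genuinely geometric ingredient is the identification of the graph metric on $G_1 \Box G_2$ with the $\ell^1$‑sum of the two factor metrics:
\[
d_{G_1\Box G_2}\big((x_1,x_2),(y_1,y_2)\big)=d_{G_1}(x_1,y_1)+d_{G_2}(x_2,y_2),
\]
where any sum involving $\infty$ is declared to be $\infty$. The inequality ``$\le$'' comes from concatenating a shortest $x_1$-to-$y_1$ path in $G_1$, traversed in the first coordinate only, with a shortest $x_2$-to-$y_2$ path in $G_2$, traversed in the second coordinate only; by the definition of the edge set of the Cartesian product each such step is an edge of $G_1\Box G_2$. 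The inequality ``$\ge$'' follows because every edge of $G_1\Box G_2$ changes exactly one coordinate, so any walk between the two vertices projects onto a walk in $G_1$ and a walk in $G_2$ whose lengths sum to that of the original walk; in particular if one factor has no connecting path, neither does the product, which handles the infinite case.

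Granting this, I would fix the lexicographic ordering on $V(G_1)\times V(G_2)$ and observe, using $q^{a+b}=q^{a}q^{b}$ (still valid with $q^{\infty}=0$), that the $\big((x_1,x_2),(y_1,y_2)\big)$-entry of $Z_{G_1\Box G_2}(q)$ is $q^{d_{G_1}(x_1,y_1)}q^{d_{G_2}(x_2,y_2)}$, so that $Z_{G_1\Box G_2}(q)=Z_{G_1}(q)\otimes Z_{G_2}(q)$. Since each $Z_{G_i}(0)$ is an identity matrix, each $\det Z_{G_i}(q)$ has constant term $1$ and $Z_{G_i}(q)$ is invertible over $\Z\llbracket q\rrbracket$; the mixed‑product identity $(A\otimes B)(C\otimes D)=AC\otimes BD$, valid over any commutative ring, then yields $Z_{G_1\Box G_2}(q)^{-1}=Z_{G_1}(q)^{-1}\otimes Z_{G_2}(q)^{-1}$. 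Finally, writing $\mathbf 1_n$ for the all‑ones column vector, the total of all entries of an $n\times n$ matrix $M$ is $\mathbf 1_n^{\top}M\,\mathbf 1_n$, and $\mathbf 1_{n_1n_2}=\mathbf 1_{n_1}\otimes\mathbf 1_{n_2}$, so one more application of the mixed‑product identity gives
\begin{align*}
\#(G_1\Box G_2)&=(\mathbf 1_{n_1}\otimes\mathbf 1_{n_2})^{\top}\big(Z_{G_1}(q)^{-1}\otimes Z_{G_2}(q)^{-1}\big)(\mathbf 1_{n_1}\otimes\mathbf 1_{n_2})\\
&=\big(\mathbf 1_{n_1}^{\top}Z_{G_1}(q)^{-1}\mathbf 1_{n_1}\big)\big(\mathbf 1_{n_2}^{\top}Z_{G_2}(q)^{-1}\mathbf 1_{n_2}\big)\\
&=\#G_1\cdot\#G_2.
\end{align*}

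The only step with real content is the metric identity of the first paragraph; everything afterward is formal bookkeeping with Kronecker products. I expect a careful write‑up to spend most of its length making the ``projection of walks'' argument precise and checking the degenerate case in which one factor is disconnected, so that the convention $q^{\infty}=0$ interacts correctly with the claimed factorization $Z_{G_1\Box G_2}(q)=Z_{G_1}(q)\otimes Z_{G_2}(q)$.
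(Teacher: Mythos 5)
The paper does not prove this statement; it is imported verbatim from Hepworth--Willerton (the result itself goes back to Leinster, \cite{leinster2019magnitude}), so there is no in-paper argument to compare against. Your proof is correct and is essentially the standard one in the literature: establish the $\ell^1$ identity $d_{G_1\Box G_2}((x_1,x_2),(y_1,y_2))=d_{G_1}(x_1,y_1)+d_{G_2}(x_2,y_2)$ (with the $q^{\infty}=0$ convention handling disconnected factors), deduce $Z_{G_1\Box G_2}(q)=Z_{G_1}(q)\otimes Z_{G_2}(q)$, and conclude by the mixed-product identity that the sum of the entries of the inverse factors as $\#G_1\cdot\#G_2$.
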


\begin{thm}[K\"{u}nneth sequence, \cite{hepworth2017categorifying}]
Let $G_1$ and $G_2$ be graphs. Then, magnitude homology satisfies a short exact sequence of the form
\begin{equation}\label{Kunneth}
0 \rightarrow \textnormal{MH}_{*,*}(G_1) \otimes \textnormal{MH}_{*,*}(G_2) \rightarrow \textnormal{MH}_{*,*}(G_1 \Box G_2) \rightarrow \textnormal{Tor}_1^{\Z}(\textnormal{MH}_{*-1,*}(G_1),\textnormal{MH}_{*,*}(G_2)) \rightarrow 0.
\end{equation}
\end{thm}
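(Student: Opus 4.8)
The plan is to reduce \eqref{Kunneth} to the algebraic Künneth theorem for a tensor product of chain complexes of free abelian groups, after identifying $\MC_{*,*}(G_1 \Box G_2)$ up to quasi-isomorphism with the bigraded tensor product $\MC_{*,*}(G_1) \otimes \MC_{*,*}(G_2)$. The geometric input is the metric counterpart of \eqref{Multiplicativity}: distances in a Cartesian product add, $d_{G_1 \Box G_2}\big((x_1,x_2),(y_1,y_2)\big) = d_{G_1}(x_1,y_1) + d_{G_2}(x_2,y_2)$. Hence, writing a $k$-path of $G_1 \Box G_2$ as $\mathbf z = \big((g_0,h_0),\dots,(g_k,h_k)\big)$, its length splits as $\ell(\mathbf z) = a(\mathbf z) + b(\mathbf z)$, where $a(\mathbf z) = \sum_i d_{G_1}(g_i,g_{i+1})$ and $b(\mathbf z) = \sum_i d_{G_2}(h_i,h_{i+1})$. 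Deleting a vertex decreases each of $a$ and $b$ by a nonnegative amount, and these amounts sum to the decrease in $\ell$; so the face map $\partial_i$, which is nonzero precisely when $\ell$ is unchanged, preserves both $a$ and $b$. Therefore $\MC_{*,\ell}(G_1 \Box G_2) = \bigoplus_{a+b=\ell} C^{a,b}_{*}$ as chain complexes, where $C^{a,b}_{k}$ is spanned by those $k$-paths $\mathbf z$ with $a(\mathbf z)=a$ and $b(\mathbf z)=b$.

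Next I would construct the shuffle (Eilenberg--Zilber) map $\nabla \colon \MC_{*,*}(G_1) \otimes \MC_{*,*}(G_2) \to \MC_{*,*}(G_1 \Box G_2)$. Given a $p$-path $\mathbf g = (g_0,\dots,g_p)$ in $G_1$ of length $a$ and a $q$-path $\mathbf h = (h_0,\dots,h_q)$ in $G_2$ of length $b$, each $(p,q)$-shuffle $\mu$ interleaves the $p$ steps of $\mathbf g$ with the $q$ steps of $\mathbf h$ into a sequence of $p+q+1$ vertices of $G_1 \Box G_2$ running from $(g_0,h_0)$ to $(g_p,h_q)$; consecutive vertices differ in a single coordinate by a value occurring consecutively in $\mathbf g$ or in $\mathbf h$, so they are distinct and at finite distance, and by the length-splitting this is a $(p+q)$-path in $C^{a,b}_{p+q}$. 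Set $\nabla(\mathbf g \otimes \mathbf h) = \sum_{\mu} \operatorname{sgn}(\mu)\, \mu(\mathbf g,\mathbf h)$. A computation formally identical to the proof that the classical shuffle map is a chain map gives $\partial\nabla = \nabla\partial$: deleting a vertex of $\mu(\mathbf g,\mathbf h)$ where a $\mathbf g$-step meets an $\mathbf h$-step is length-preserving (by additivity of distance) and yields a ``diagonal'' path, but such terms cancel in pairs indexed by shuffles differing by an adjacent transposition. So $\nabla$ is a map of bigraded chain complexes respecting the decomposition above: $\nabla = \bigoplus_{a,b}\nabla^{a,b}$ with $\nabla^{a,b} \colon \MC_{*,a}(G_1) \otimes \MC_{*,b}(G_2) \to C^{a,b}_{*}$.

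The crux is showing each $\nabla^{a,b}$ is a quasi-isomorphism --- an Eilenberg--Zilber theorem in this combinatorial setting, made slightly delicate by the magnitude differential's omission of the outermost face maps (path endpoints are never deleted). I would prove it by exhibiting an Alexander--Whitney-type map $\mathrm{AW}^{a,b} \colon C^{a,b}_{*} \to \bigoplus_{p+q=*} \MC_{p,a}(G_1) \otimes \MC_{q,b}(G_2)$ built from front faces of the $G_1$-projection and back faces of the $G_2$-projection, with any projected tuple having a repeated consecutive vertex read as $0$; then checking that $\mathrm{AW}^{a,b}$ is a chain map, that $\mathrm{AW}^{a,b}\circ\nabla^{a,b} = \mathrm{id}$, and supplying the usual explicit contracting homotopy for $\nabla^{a,b}\circ\mathrm{AW}^{a,b} \simeq \mathrm{id}$ (alternatively, an acyclic-models argument with products of intervals as models). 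This makes each $\nabla^{a,b}$, hence $\nabla$, a chain homotopy equivalence.

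Given the quasi-isomorphism, the rest is formal. Every $\MC_{k,\ell}(G_1)$ is free abelian, so the algebraic Künneth theorem applies to $\MC_{*,*}(G_1) \otimes \MC_{*,*}(G_2)$ and produces a natural short exact sequence $0 \to \MH_{*,*}(G_1) \otimes \MH_{*,*}(G_2) \to H_{*,*}\big(\MC(G_1) \otimes \MC(G_2)\big) \to \textnormal{Tor}_1^{\Z}\big(\MH_{*-1,*}(G_1),\MH_{*,*}(G_2)\big) \to 0$, in which homological degrees and lengths are each additive and the $\textnormal{Tor}$-term carries the customary degree shift. Transporting this sequence along the quasi-isomorphism $\nabla$ replaces the middle term by $\MH_{*,*}(G_1 \Box G_2)$, and naturality of both the shuffle map and the algebraic Künneth sequence gives \eqref{Kunneth}. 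The main obstacle is the Eilenberg--Zilber step: making the Alexander--Whitney map and its homotopy precise once degenerate projected tuples are discarded and the endpoint-fixing convention is taken into account.
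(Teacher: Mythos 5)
This theorem is stated in the paper only as a quoted result of Hepworth and Willerton, so there is no in-paper proof to compare against; your proposal is, however, essentially the strategy of the cited source: use additivity of distances in $G_1 \Box G_2$ to split $\MC_{*,\ell}(G_1\Box G_2)$ over pairs $(a,b)$ with $a+b=\ell$, define a shuffle (Eilenberg--Zilber) map from $\MC_{*,*}(G_1)\otimes\MC_{*,*}(G_2)$, show it is an equivalence, and then invoke the algebraic K\"unneth theorem, which applies because the magnitude chain groups are free abelian. Your preliminary reductions are correct and carefully justified (in particular the observation that a length-preserving face map must preserve $a$ and $b$ separately, and the cancellation of ``corner'' deletions in the chain-map check for the shuffle map).

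The one substantive issue is that the Eilenberg--Zilber step --- which you correctly identify as the crux --- is only described, not carried out, and it is where essentially all of the content of the theorem lives. The magnitude complex is not the normalized chain complex of a simplicial set in the standard sense: the differential omits the outermost faces, and ``degenerate'' projected tuples (those with a repeated consecutive vertex) are declared to be zero rather than quotiented out of an honest simplicial structure, so neither the classical Alexander--Whitney identities nor the method of acyclic models applies off the shelf. Hepworth and Willerton handle this by an explicit argument showing the shuffle map is split injective with an explicitly contractible complement (equivalently, a deformation retraction of $\MC_{*,*}(G_1\Box G_2)$ onto the image of the shuffle map), and writing down and verifying that homotopy in the presence of the endpoint-fixing and length-preservation constraints is genuinely delicate. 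As submitted, your argument is a correct and well-organized plan whose key lemma is asserted rather than proved; completing it requires either constructing the Alexander--Whitney map and contracting homotopy explicitly in this setting, or citing the corresponding proposition of the source paper.
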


The K\"{u}nneth sequence (\ref{Kunneth}) lifts equation (\ref{Multiplicativity}) in the sense that taking the graded Euler characteristic of the former yields the latter.

The fourth property noted here is a Mayer-Vietoris-type sequence for magnitude homology. This sequence relates the magnitude homology of a union of two subgraphs to the magnitude of the subgraphs and their intersection. As we will see, this sequence lifts the inclusion-exclusion formula for magnitude and holds for so-called projecting decompositions. The following definitions can be found in \cite{leinster2019magnitude} and \cite{hepworth2017categorifying}.

\begin{defn}[Convex subgraph, \cite{leinster2019magnitude}]
Let $H$ be a subgraph of a graph $G$, let $d$ be the shortest path metric on $G$, and let $d_H$ be the shortest path metric $H$. $H$ is said to be convex in $G$ if $d_H(x,y) = d(x,y)$
for vertices $x,y \in H$.
\end{defn}

\begin{defn}[Projecting subgraph, \cite{leinster2019magnitude}]
Let $H$ be a convex subgraph of a graph $G$, and write
\[
V_H(G) = \bigcup_{h \in H} \{ x \in G \ | \ d(x,h) < \infty \}.
\]
In other words, $V_H(G)$ consists of those vertices of $G$ in the same connected component of $H$. $G$ is said to project onto $H$ if for each $x \in V_H(G)$ there is a vertex $\pi(x) \in H$ such that for each $h \in H$,
\[
d(x,h) = d(x,\pi(x)) + d(\pi(x),h).
\]
\end{defn}

If a graph $G$ projects onto a convex subgraph $H$, then we get a projection map $\pi : V_H(G) \rightarrow V(H), \ x \mapsto \pi(x)$, sending each $x$ to its unique closest neighbor in $H$. This map is analogous to the projection map onto a closed, convex subset of Euclidean space.

\begin{exa}
For a positive integer $r \ge 2$, the cycle graph $C_r$ has vertex set $V = \Z_r$ and edge set $E = \{ \{ i, i+1 \} : i \in \Z_r \}$. Each even cycle graph $C_{2n}$ projects onto a single edge, but no odd cycle graph $C_{2n+1}$ projects onto a single edge.
\end{exa}

\begin{defn}[Projecting decomposition, \cite{hepworth2017categorifying}]
Let $H_1$ and $H_2$ be subgraphs of a graph $G$. The triple $(G;H_1,H_2)$ is said to be a projecting decomposition of $G$ if the following hold:

\[
1. \; G = H_1 \cup H_2 \qquad 2. \; H_1 \cap H_2 \ \textnormal{is convex in} \ G \qquad 3. \; H_1 \ \textnormal{projects onto} \ H_1 \cap H_2.
\]

\end{defn}

\begin{thm}[Inclusion-exclusion formula, \cite{leinster2019magnitude}]
Let $(G;H_1,H_2)$ be a projecting decomposition of a graph $G$. Then, magnitude satisfies $ \ \#G = \#H_1 + \#H_2 - \#(H_1 \cup H_2)$.
\end{thm}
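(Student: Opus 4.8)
\textit{Proof proposal.} Write $A = H_1 \cap H_2$; this is the subgraph whose magnitude occurs (with a minus sign) in the formula, since $H_1 \cup H_2 = G$. The plan is to use Leinster's weighting description of magnitude: for any graph $K$ the matrix $Z_K(q)$ is invertible over $\Z\llbracket q\rrbracket$ (its determinant has constant term $1$), so there is a unique \emph{weighting} $w_K \colon V(K)\to \Z\llbracket q\rrbracket$ with $\sum_{y\in V(K)} q^{d_K(x,y)} w_K(y) = 1$ for all $x\in V(K)$, and then $\#K = \sum_{x} w_K(x)$. It therefore suffices to glue $w_{H_1}, w_{H_2}, w_A$ into one function $w$ on $V(G) = \big(V(H_1)\setminus V(A)\big)\sqcup V(A)\sqcup\big(V(H_2)\setminus V(A)\big)$, by $w = w_{H_1}$ on $V(H_1)\setminus V(A)$, $w = w_{H_2}$ on $V(H_2)\setminus V(A)$, and $w = w_{H_1} + w_{H_2} - w_A$ on $V(A)$, and to verify that $w$ is the weighting on $G$. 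Once that holds, summing over $V(G)$ gives $\#G = \#H_1 + \#H_2 - \#A$ immediately.

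Before the verification I would record the metric consequences of the hypotheses. Since $A$ is convex in $G$ and $A\subseteq H_i\subseteq G$, the chain $d_A \ge d_{H_i}\ge d_G$ collapses on $V(A)$, so $A$ is convex in each $H_i$. A short excision argument (replace any geodesic excursion of a $G$-geodesic outside $H_i$ by an $A$-geodesic between its endpoints, which necessarily lie in $V(A)$ since $G$ has no edge joining $V(H_1)\setminus V(H_2)$ to $V(H_2)\setminus V(H_1)$) shows that $H_1$ and $H_2$ are themselves convex in $G$. Writing $\pi\colon V_A(H_1)\to V(A)$ for the projection of $H_1$ onto $A$, the projecting identity together with the fact that every $G$-path from $H_1\setminus A$ to $H_2\setminus A$ meets $V(A)$ yields the key factorization through $\pi$: for $x\in V(H_1)$, $y\in V(H_2)$ one has $d_G(x,y) = d_{H_1}(x,\pi x) + d_{H_2}(\pi x,y)$ (both sides $\infty$ if $x\notin V_A(H_1)$), and symmetrically $d_G(x,y) = d_{H_2}(x,\pi y) + d_{H_1}(\pi y,y)$ for $x\in V(H_2)$, $y\in V(H_1)$ — note only the $H_1$-coordinate is ever projected.

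The computational core is one conditional weighting identity. For $b\in V(A)$ the partial sum $\phi_i(b) := \sum_{y\in V(H_i)\setminus V(A)} q^{d_{H_i}(b,y)} w_{H_i}(y)$ equals $1 - \sum_{y\in V(A)} q^{d_A(b,y)} w_{H_i}(y)$ by the weighting property on $H_i$; grouping the same sum by the value $a = \pi y$ and using the factorization identity (and $d_{H_i}(b,y) = d_A(b,a) + d_{H_i}(a,y)$ when $\pi y = a$) rewrites it as $\sum_{a\in V(A)} q^{d_A(b,a)} W_i(a)$ with $W_i(a) := \sum_{y\colon \pi y = a,\ y\notin V(A)} q^{d_{H_i}(a,y)} w_{H_i}(y)$. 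Matching the two expressions reads $Z_A W_i = \mathbf 1 - Z_A (w_{H_i}|_{V(A)})$, and inverting $Z_A$ over $\Z\llbracket q\rrbracket$ gives $W_i(a) = w_A(a) - w_{H_i}(a)$. With this, checking $\sum_{y\in V(G)} q^{d_G(x,y)} w(y) = 1$ reduces to the two cases $x\in V(H_1)$ and $x\in V(H_2)\setminus V(A)$ (components of $H_i$ missing $A$ being handled by the restricted weighting equation directly): substitute the factorization formula into the cross terms, pull out the common factor $q^{d(x,\pi x)}$ (resp. rewrite the $H_1\setminus A$ contribution via $W_1$), and watch the $A$-indexed sums cancel in pairs, leaving $1$.

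The case-by-case substitutions at the end are routine. The main obstacle is the preliminary geometry of projecting decompositions — establishing convexity of $H_1$ and $H_2$ in $G$ and, above all, the factorization-through-$\pi$ identity in clean form — since these are exactly what forces every stray $A$-indexed sum to collapse; after that the argument is bookkeeping with the weighting equations plus a single use of invertibility of $Z_A$.
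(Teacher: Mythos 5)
The paper states this result purely as a citation to Leinster and supplies no proof of its own, so there is nothing internal to compare against; your proposal correctly reconstructs the standard (Leinster's) argument, gluing the three weightings $w_{H_1}$, $w_{H_2}$, $w_{A}$ into a single weighting on $G$ by means of the convexity of $H_1,H_2$ in $G$ and the factorization $d_G(x,y)=d_{H_1}(x,\pi x)+d_{H_2}(\pi x,y)$, with the two verification cases and the identity $W_1=w_A-w_{H_1}|_{V(A)}$ going through exactly as you sketch. Two cosmetic points: the $\#(H_1\cup H_2)$ in the statement is a typo for $\#(H_1\cap H_2)$, which you correctly repaired; and your $\phi_i$, $W_i$ should be written only for $i=1$, since the definition of a projecting decomposition gives a projection $\pi$ only on $H_1$ (so $W_2$ is undefined), but this is harmless because your final cancellation uses only $W_1$.
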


This inclusion-exclusion formula lifts to a Mayer-Vietoris-type sequence in magnitude homology.

\begin{thm}[Mayer-Vietoris for magnitude homology, \cite{hepworth2017categorifying}]
Let $(G;H_1,H_2)$ be a projecting decomposition of a graph $G$. Then, magnitude homology satisfies a short exact sequence of the form
\begin{equation}\label{MVSequence}
0 \rightarrow \textnormal{MH}_{*,*}(H_1 \cap H_2) \rightarrow \textnormal{MH}_{*,*}(H_1) \oplus \textnormal{MH}_{*,*}(H_2) \rightarrow \textnormal{MH}_{*,*}(G) \rightarrow 0
\end{equation}
where the middle two maps are induced by inclusions. Moreover, the sequence splits.
\end{thm}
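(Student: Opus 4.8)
The plan is to build the short exact sequence at the level of magnitude \emph{chain complexes} and then argue that it remains exact after passing to homology, using the fact that the inclusions of $H_1$, $H_2$, and $H_1 \cap H_2$ into $G$ induce maps of chain complexes that preserve both gradings. First I would observe that a $k$-path $(x_0,\ldots,x_k)$ in $G$ is automatically a $k$-path in $H_i$ provided all of its vertices lie in $H_i$, and that convexity of $H_1 \cap H_2$ in $G$ guarantees that lengths are computed consistently, so we get inclusion-induced maps $\MC_{*,*}(H_1 \cap H_2) \to \MC_{*,*}(H_1) \oplus \MC_{*,*}(H_2)$ (by $\textbf{x} \mapsto (\textbf{x}, -\textbf{x})$, up to sign conventions) and $\MC_{*,*}(H_1) \oplus \MC_{*,*}(H_2) \to \MC_{*,*}(G)$ (by $(\textbf{x},\textbf{y}) \mapsto \textbf{x} + \textbf{y}$). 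Injectivity on the left and the fact that the composite is zero are immediate from the definitions.

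The crux is to prove that the middle map $\MC_{*,*}(H_1) \oplus \MC_{*,*}(H_2) \to \MC_{*,*}(G)$ is \emph{surjective} and that its kernel is exactly the image of the left-hand map; equivalently, every $k$-path in $G$ has all its vertices in $H_1$ or all its vertices in $H_2$. This is where the projecting hypothesis does the real work: given a $k$-path $\textbf{x} = (x_0,\ldots,x_k)$ in $G$, I would use the projection $\pi : V_{H_1 \cap H_2}(G) \to H_1 \cap H_2$ together with the additivity formula $d(x,h) = d(x,\pi(x)) + d(\pi(x),h)$ to show that if some $x_j \notin H_1$ then, since $G = H_1 \cup H_2$, we have $x_j \in H_2 \setminus H_1$, and then a geodesic realizing the consecutive distances $d(x_{j-1},x_j)$ and $d(x_j,x_{j+1})$ cannot cross into the $H_1$-only region without violating either convexity of $H_1 \cap H_2$ or the projection identity; the upshot is that once a path leaves $H_1$ it cannot return, and symmetrically for $H_2$, forcing the path to lie entirely in one piece. (This is essentially the argument Hepworth and Willerton use; I would reconstruct it carefully, as it is the one genuinely nontrivial point.) A path lying in both $H_1$ and $H_2$ lies in $H_1 \cap H_2$, which identifies the kernel of the middle map with the image of the left map, giving a short exact sequence of chain complexes, levelwise split because $\MC_{k,\ell}$ is free abelian and the sequence is a sequence of free modules with the middle one visibly a direct sum.

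Having a short exact sequence of chain complexes that is split in each bidegree, I would then note that the splitting can be chosen compatibly with the differentials is \emph{not} automatic from levelwise splitting alone; instead I would take homology to obtain the long exact sequence
\[
\cdots \to \MH_{k,\ell}(H_1 \cap H_2) \to \MH_{k,\ell}(H_1) \oplus \MH_{k,\ell}(H_2) \to \MH_{k,\ell}(G) \xrightarrow{\ \partial\ } \MH_{k-1,\ell}(H_1 \cap H_2) \to \cdots
\]
and argue that the connecting homomorphism $\partial$ vanishes. The cleanest way is to exhibit a chain-level retraction $\MC_{*,*}(G) \to \MC_{*,*}(H_1) \oplus \MC_{*,*}(H_2)$ that commutes with $\partial$: since every generator $k$-path of $\MC(G)$ lies in exactly one of $H_1$ or $H_2$ or in their intersection, one can send a path lying in $H_1$ to its copy in the first summand, a path lying in $H_2$ but not $H_1$ to its copy in the second summand, and check this is a chain map splitting the surjection; its existence forces the long exact sequence to break into split short exact sequences. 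The main obstacle I anticipate is precisely the verification that this set-theoretic splitting respects the differential $\partial = \sum (-1)^i \partial_i$, since deleting an interior vertex could a priori move a path between the regions --- but convexity of $H_1 \cap H_2$ ensures that deleting a vertex of a path contained in $H_i$ yields a path still contained in $H_i$ with the same length behavior, so the chain-map property goes through, and the splitting of the sequence in (\ref{MVSequence}) follows.
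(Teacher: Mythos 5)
The statement is quoted from Hepworth--Willerton and the present paper gives no proof of it, so your proposal has to stand on its own --- and it does not. The central step, that the chain-level map $\MC_{*,*}(H_1)\oplus\MC_{*,*}(H_2)\to\MC_{*,*}(G)$ is surjective because ``every $k$-path in $G$ has all its vertices in $H_1$ or all its vertices in $H_2$,'' is false. Consecutive entries of a $k$-path need only be at finite distance, not adjacent, so a path can jump directly from $H_1\setminus H_2$ to $H_2\setminus H_1$. Concretely, let $G$ be the path graph on vertices $a,b,c$ with edges $\{a,b\}$ and $\{b,c\}$, and take $H_1=\{a,b\}$, $H_2=\{b,c\}$; this is a projecting decomposition, yet the $1$-path $(a,c)$, of length $2$, lies in neither $H_1$ nor $H_2$. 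So there is no short exact sequence of chain complexes of the shape you describe, and the chain-level retraction in your last paragraph does not exist either. Your heuristic that ``once a path leaves $H_1$ it cannot return'' is also not what the projecting hypothesis yields: $(a,c,a)$ is a perfectly good $2$-path that leaves and returns.

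The actual argument is homological rather than chain-level. One does get a short exact sequence
\begin{equation*}
0 \to \MC(H_1\cap H_2) \to \MC(H_1)\oplus\MC(H_2) \to \MC(H_1)+\MC(H_2) \to 0,
\end{equation*}
where $\MC(H_1)+\MC(H_2)$ is the \emph{subcomplex} of $\MC(G)$ spanned by paths lying wholly in $H_1$ or wholly in $H_2$ (here convexity is what makes these honest subcomplexes), and the genuine content is that the inclusion $\MC(H_1)+\MC(H_2)\hookrightarrow\MC(G)$ is a quasi-isomorphism. Hepworth and Willerton prove this by showing the quotient complex --- spanned by paths meeting both $H_1\setminus H_2$ and $H_2\setminus H_1$ --- is acyclic, and it is exactly here that the projection $\pi$ onto $H_1\cap H_2$ is used, to build a contracting homotopy by inserting $\pi(x_i)$ into a path; the identity $d(x,h)=d(x,\pi(x))+d(\pi(x),h)$ is what guarantees the insertion preserves length. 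The splitting then also requires its own argument rather than falling out of a levelwise-split chain sequence. In the example above, the class of $(a,c)$ dies in homology because $(a,c)=\partial_1(a,b,c)$ --- precisely the phenomenon your proposed surjectivity claim cannot account for.
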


The Mayer-Vietoris sequence lifts the inclusion-exclusion formula in the sense that taking the graded Euler characteristic of the former yields the latter. The Mayer-Vietoris sequence (\ref{MVSequence}) is the essential computational tool of Section \ref{ComputationsAndMainDiagonal} when it comes to determining homology groups of a family of outerplanar graphs.

\section{Torsion in Magnitude Homology}\label{TorsionInMagnitudeHomology}

In this section, we review the current state of knowledge regarding torsion in magnitude homology, show that torsion of a prescribed prime order can be found in magnitude homology, and construct infinite families of graphs with a given order of torsion in magnitude homology.

\subsection{Torsion of prime order in magnitude homology}\label{ExistenceOfPTorsion}

In 2017, Kaneta and Yoshinaga demonstrated the existence of a graph with torsion of order two in magnitude homology \cite{YoshinagaKaneta2018}. More specifically, they describe a method for constructing a graph from a simplicial complex, such that the homology of the simplicial complex embeds into the magnitude homology of the resulting graph. Next, we describe this construction and extend their result on torsion of order two by constructing graphs whose magnitude homology contains torsion of a given order, and more generally a subgroup isomorphic to a given finitely generated abelian group.

\begin{defn}[Face poset, \cite{Wachs2006PosetsOrderComplexes}]
Let $K$ be a simplicial complex. The face poset of $K$ is the partially ordered set whose elements are the faces of $K$, ordered by inclusion. Denote this poset by $P(K)$.
\end{defn}

\begin{defn}[\cite{YoshinagaKaneta2018}]
Let $K$ be a simplicial complex of dimension $m$ such that each face of $K$ is the face of an $m$-simplex. Kaneta and Yoshinaga construct a graph $G(K)$ as follows. Let $\widehat{P(K)}$ be the poset obtained from $P(K)$ by adjoining a minimum element $\hat{0}$ (if it does not already have one) and a maximum element $\hat{1}$ (if it does not already have one). Then, $G(K)$ is the underlying graph of the Hasse diagram of $\widehat{P(K)}$.
\end{defn}

\begin{figure}[H]
\begin{center}
\begin{tikzpicture}[scale=.8]
\node at (2,2.25){\usebox\twosimplex};

\node at (2,2){\textcolor{red}{$K$}};

\draw[->] (4,2.25)--(6,2.25);

\node at (9.5,2.25){\usebox\YKtwosimplex};

\node at (10.1,2.1){\textcolor{red}{$G(K)$}};
\end{tikzpicture}
\end{center}
\caption{A minimal triangulation $K$ of a disc and the underlying graph $G(K)$ of the poset $\widehat{P(K)}$.}
\end{figure}
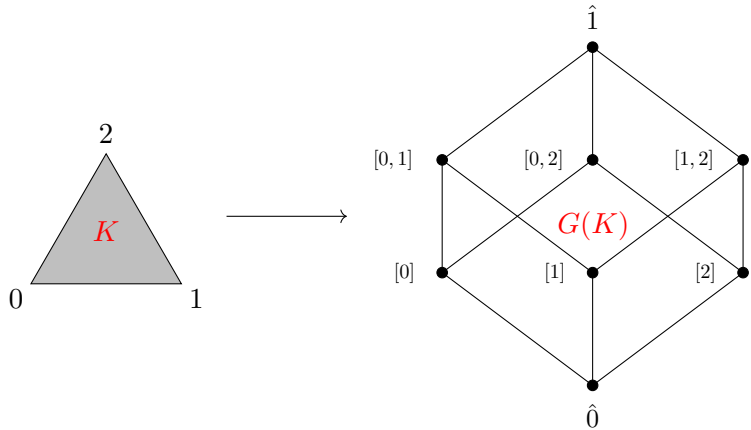

Recall that a triangulation of a topological space $X$ is a pair $(K,h)$ where $K$ is a simplicial complex and $h : |K| \rightarrow X$ is a homeomorphism of the geometric realization $|K|$ of $K$ with $X$. By a small abuse of notation, let us write $K=(K,h)$. The following lemma is our main tool for producing graphs with torsion in magnitude homology.

\begin{thm}[\cite{YoshinagaKaneta2018}, Corollary 5.12 (a)]\label{YKembeddinglemma}

Let $K$ be a triangulation of a manifold $M$ and let $\ell = d(\hat{0},\hat{1})$ be the distance between to the minimum and maximum elements of $\widehat{P(K)}$. For each $k \ge 1$ there is an embedding of the reduced singular homology groups of $M$ into the magnitude homology of the associated graph $G(K)$,

\begin{equation}\label{YKembeddingequation}
\widetilde{H}_{k-2}(M) \hookrightarrow \textnormal{MH}_{k,\ell}(G(K)).
\end{equation}
\end{thm}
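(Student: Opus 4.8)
The plan is to reduce the statement to a computation of the magnitude homology of $G(K)$ in the top degree $\ell = d(\hat 0,\hat 1)$ and then identify that homology with the (reduced, shifted) simplicial chain complex of $K$. First I would observe that in the graph $G(K)$ built from the Hasse diagram of $\widehat{P(K)}$, the length of any path is governed entirely by how the ranks of the poset elements increase and decrease: an edge of the Hasse diagram between comparable faces $F \subsetneq F'$ has length $1$, the distance $d(\hat 0,\hat 1)$ equals the number of rank-jumps needed to climb from the minimum to the maximum (for a triangulation of an $M$ of dimension $m$, this is $m+2$ via $\hat 0 \to [\text{vertex}] \to \cdots \to [\text{top simplex}] \to \hat 1$), and a $k$-path $(x_0,\dots,x_k)$ of this minimal total length $\ell$ must be strictly increasing in the poset at every step — any "descent" would force the length to exceed $\ell$. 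So $\MC_{k,\ell}(G(K))$ in this top degree is spanned exactly by strictly increasing chains $\hat 0 = y_0 \subsetneq y_1 \subsetneq \cdots \subsetneq y_{k-1} \subsetneq y_k = \hat 1$ of length $\ell$ in $\widehat{P(K)}$, i.e. by chains that hit every rank once; these are in bijection with flags of faces $\emptyset \subsetneq F_1 \subsetneq \cdots \subsetneq F_m$, which is exactly the data of the order complex / barycentric subdivision of $K$.

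Next I would match the differentials. The magnitude boundary $\partial = \sum_{i=1}^{k-1}(-1)^i \partial_i$ deletes an interior vertex $x_i$ precisely when the length is unchanged; for a chain that occupies every rank, deleting $y_i$ creates a rank-gap and strictly increases the length, so that term is zero — except when the deleted vertex is adjacent to $\hat 0$ or $\hat 1$ in a way that... no: in fact I expect that on the top-length chains the only nonzero contributions correspond to the simplicial face maps on the flag, after one accounts for the fixed endpoints $\hat 0,\hat 1$ which are never deleted. Carrying this through identifies $(\MC_{*,\ell}(G(K)),\partial)$ in an appropriate range with (a shift of) the simplicial chain complex of $\mathrm{sd}(K)$, hence of $K$, hence of $M$; the grading shift by $2$ comes from the two extra vertices $\hat 0,\hat 1$ padding each chain, which is the source of $\widetilde H_{k-2}(M)$ landing in $\MH_{k,\ell}$. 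The reduced homology (rather than unreduced) appears because the "empty flag" chain $(\hat 0,\hat 1)$ contributes the augmentation.

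The main obstacle — and the step I would be most careful about — is verifying that the top-length part $\MC_{*,\ell}(G(K))$ is actually a \emph{subcomplex} (closed under $\partial$, which it is since $\partial$ preserves $\ell$) \emph{and} that its homology injects into, rather than merely maps to, $\MH_{*,\ell}(G(K))$: a priori there could be lower-degree generators in bigrading $(k,\ell)$ coming from non-monotone $k$-paths that also have length $\ell$ but use more steps, so $\MC_{k,\ell}$ is in general strictly larger than the span of the monotone maximal chains, and one must check these extra generators do not kill the simplicial cycles. This is exactly where the hypothesis that $K$ triangulates a manifold (and that every face lies in a top simplex) is used: it controls the geometry of geodesics in $G(K)$ tightly enough — e.g. via the projection/convexity structure, or by a direct combinatorial argument that any length-$\ell$ path either is a monotone maximal chain or is a boundary — so that the inclusion of the "staircase" subcomplex is a homology monomorphism. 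I would expect the cleanest route is to exhibit an explicit splitting or a filtration argument showing the monotone chains form a direct summand up to the relevant degree; alternatively one simply cites that this is precisely the content of \cite{YoshinagaKaneta2018}, Corollary 5.12(a), whose proof does this bookkeeping, and our use of it is as a black box.
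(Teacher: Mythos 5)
First, a point of reference: the paper does not prove this statement --- it is imported verbatim from Kaneta--Yoshinaga (their Corollary 5.12(a)) and used as a black box --- so there is no in-paper proof to compare against, and your sketch has to stand on its own. Your overall strategy (identify part of $\MC_{*,\ell}(G(K))$ with the order complex of the nonempty faces of $K$, i.e.\ the barycentric subdivision of $K$, with the degree shift of $2$ coming from the padding vertices $\hat 0,\hat 1$ and reduced homology coming from the path $(\hat 0,\hat 1)$ as augmentation) is the right one. But the step you yourself flag as the ``main obstacle'' --- why the homology of the chain subcomplex \emph{injects} into $\MH_{k,\ell}(G(K))$ rather than merely mapping to it --- is the entire content of the theorem, and you leave it unresolved except by proposing to cite the result being proved. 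The missing idea is elementary: the differential $\partial=\sum_{i=1}^{k-1}(-1)^i\partial_i$ deletes only \emph{interior} vertices, so it preserves the endpoints $x_0$ and $x_k$ of every generator. Hence $(\MC_{*,\ell}(G),\partial)$ splits as a direct sum of chain complexes indexed by ordered pairs of endpoints $(a,b)$, and $\textnormal{MH}_{k,\ell}(G)\cong\bigoplus_{a,b}H_k(\MC_{*,\ell}(a,b))$. Taking $(a,b)=(\hat 0,\hat 1)$ with $\ell=d(\hat 0,\hat 1)$ exhibits the desired group as a direct summand, so injectivity is automatic; no filtration, splitting, or geodesic analysis is needed, and the length-$\ell$ paths with other endpoints that worried you simply live in other summands.

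Two local assertions in your sketch are also wrong and would need repair. First, the $k$-paths from $\hat 0$ to $\hat 1$ of length $\ell$ are \emph{all} strictly increasing chains $\hat 0<x_1<\dots<x_{k-1}<\hat 1$, not only those ``hitting every rank once'': since each Hasse edge changes rank by $\pm 1$, one has $d(x,y)\ge|\mathrm{rk}(y)-\mathrm{rk}(x)|$ with equality exactly when $x$ and $y$ are comparable, and summing over the path shows $\ell(\mathbf{x})\ge\ell$ with equality precisely for chains; the saturated chains are only the generators in the single top degree $k=\ell$. Second, on such a chain $\partial_i$ is \emph{never} zero --- deleting $x_i$ from $x_{i-1}<x_i<x_{i+1}$ preserves the length because $d(x_{i-1},x_{i+1})=d(x_{i-1},x_i)+d(x_i,x_{i+1})$ for comparable elements --- and it is exactly the $i$-th face map of the order complex of the open interval $(\hat 0,\hat 1)$, which is $\mathrm{sd}(K)\cong M$. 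With the endpoint decomposition supplied and these two points corrected, your sketch closes up into a complete proof.
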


\begin{exa}[\cite{YoshinagaKaneta2018}]\label{exYKRP2}
Let $K$ be a minimal triangulation of $\R\mathbb{P}^2$ as shown in Figure \ref{Fig:RP2TriangulationAndGraph} (a). By Theorem \ref{YKembeddinglemma} there is an embedding
$\Z_2 \cong H_1(\mathbb{R}\mathbb{P}^2) \hookrightarrow \textnormal{MH}_{3,4}(G(K))$.

\begin{figure}[H]
\begin{subfigure}{0.48\textwidth}
\centering
\begin{tikzpicture}[scale=.45]

\filldraw[draw=black, fill=red!15] (6,2)--(4,5)--(1,5)--cycle;
\filldraw[draw=black, fill=red!15] (6,2)--(8,5)--(4,5)--cycle;
\filldraw[draw=black, fill=red!15] (6,2)--(11,5)--(8,5)--cycle;
\filldraw[draw=black, fill=red!15] (1,5)--(4,5)--(1,9)--cycle;
\filldraw[draw=black, fill=red!15] (4,5)--(8,5)--(6,9)--cycle;
\filldraw[draw=black, fill=red!15] (8,5)--(11,5)--(11,9)--cycle;
\filldraw[draw=black, fill=red!15] (4,5)--(6,9)--(1,9)--cycle;
\filldraw[draw=black, fill=red!15] (8,5)--(11,9)--(6,9)--cycle;
\filldraw[draw=black, fill=red!15] (1,9)--(6,9)--(6,12)--cycle;
\filldraw[draw=black, fill=red!15] (6,9)--(11,9)--(6,12)--cycle;

\end{tikzpicture}
\caption{}
\end{subfigure}
\begin{subfigure}{0.48\textwidth}
\centering
\begin{tikzpicture}[scale=.35]
\fill (9,0) circle (1.5mm);

\draw (9,0)--(4,3);
\draw (9,0)--(6,3);
\draw (9,0)--(8,3);
\draw (9,0)--(10,3);
\draw (9,0)--(12,3);
\draw (9,0)--(14,3);

\fill (4,3) circle (1.5mm);
\fill (6,3) circle (1.5mm);
\fill (8,3) circle (1.5mm);
\fill (10,3) circle (1.5mm);
\fill (12,3) circle (1.5mm);
\fill (14,3) circle (1.5mm);

\draw (2,6)--(4,3);
\draw (3,6)--(4,3);
\draw (5,6)--(4,3);
\draw (8,6)--(4,3);
\draw (12,6)--(4,3);

\draw (4,6)--(6,3);
\draw (6,6)--(6,3);
\draw (9,6)--(6,3);
\draw (13,6)--(6,3);
\draw (2,6)--(6,3);

\draw (3,6)--(8,3);
\draw (4,6)--(8,3);
\draw (7,6)--(8,3);
\draw (10,6)--(8,3);
\draw (14,6)--(8,3);

\draw (5,6)--(10,3);
\draw (6,6)--(10,3);
\draw (7,6)--(10,3);
\draw (11,6)--(10,3);
\draw (15,6)--(10,3);

\draw (8,6)--(12,3);
\draw (9,6)--(12,3);
\draw (10,6)--(12,3);
\draw (11,6)--(12,3);
\draw (16,6)--(12,3);

\draw (16,6)--(14,3);
\draw (15,6)--(14,3);
\draw (14,6)--(14,3);
\draw (13,6)--(14,3);
\draw (12,6)--(14,3);

\draw (0,9)--(2,6);
\draw (8,9)--(2,6);

\draw (2,9)--(3,6);
\draw (4,9)--(3,6);

\draw (6,9)--(4,6);
\draw (10,9)--(4,6);

\draw (0,9)--(5,6);
\draw (2,9)--(5,6);

\draw (0,9)--(6,6);
\draw (14,9)--(6,6);

\draw (2,9)--(7,6);
\draw (16,9)--(7,6);

\draw (4,9)--(8,6);
\draw (12,9)--(8,6);

\draw (6,9)--(9,6);
\draw (14,9)--(9,6);

\draw (4,9)--(10,6);
\draw (6,9)--(10,6);

\draw (14,9)--(11,6);
\draw (18,9)--(11,6);

\draw (8,9)--(12,6);
\draw (12,9)--(12,6);

\draw (8,9)--(13,6);
\draw (10,9)--(13,6);

\draw (10,9)--(14,6);
\draw (16,9)--(14,6);

\draw (16,9)--(15,6);
\draw (18,9)--(15,6);

\draw (12,9)--(16,6);
\draw (18,9)--(16,6);

\draw (0,9)--(9,12);
\draw (2,9)--(9,12);
\draw (4,9)--(9,12);
\draw (6,9)--(9,12);
\draw (8,9)--(9,12);
\draw (10,9)--(9,12);
\draw (12,9)--(9,12);
\draw (14,9)--(9,12);
\draw (16,9)--(9,12);
\draw (18,9)--(9,12);

\fill (2,6) circle (1.5mm);
\fill (3,6) circle (1.5mm);
\fill (4,6) circle (1.5mm);
\fill (5,6) circle (1.5mm);
\fill (6,6) circle (1.5mm);
\fill (7,6) circle (1.5mm);
\fill (8,6) circle (1.5mm);
\fill (9,6) circle (1.5mm);
\fill (10,6) circle (1.5mm);
\fill (11,6) circle (1.5mm);
\fill (12,6) circle (1.5mm);
\fill (13,6) circle (1.5mm);
\fill (14,6) circle (1.5mm);
\fill (15,6) circle (1.5mm);
\fill (16,6) circle (1.5mm);

\fill (0,9) circle (1.5mm);
\fill (2,9) circle (1.5mm);
\fill (4,9) circle (1.5mm);
\fill (6,9) circle (1.5mm);
\fill (8,9) circle (1.5mm);
\fill (10,9) circle (1.5mm);
\fill (12,9) circle (1.5mm);
\fill (14,9) circle (1.5mm);
\fill (16,9) circle (1.5mm);
\fill (18,9) circle (1.5mm);

\fill (9,12) circle (1.5mm);

\end{tikzpicture}
\caption{}
\label{G(RP2)}
\end{subfigure}
\caption{(a) A plane drawing of a minimal triangulation $K$ of $\mathbb{R}\mathbb{P}^2$, with outer edges appropriately identified. (b) The graph $G(K)$ obtained from this triangulation using the approach of Kaneta and Yoshinaga.}
\label{Fig:RP2TriangulationAndGraph}
\end{figure}
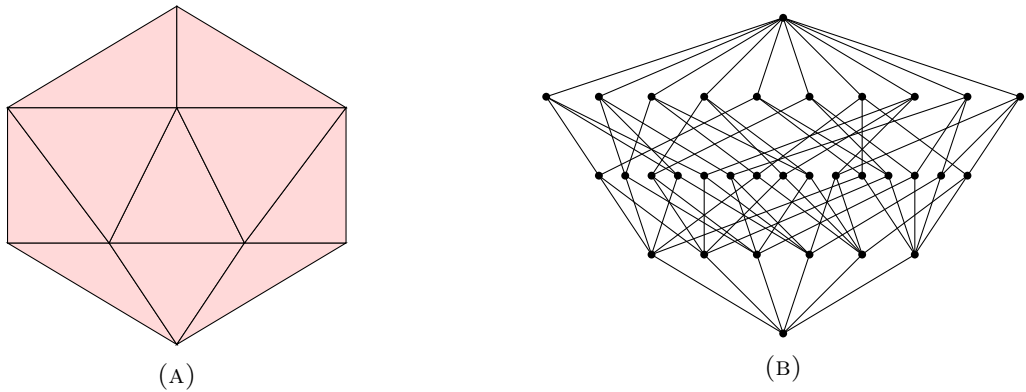
\end{exa}

We now extend this result of Kaneta and Yoshinaga in two directions. First, we show there exist graphs with torsion of order two in magnitude homology in bigradings other than $(3,4)$. Then, we use lens spaces to show that there are graphs with torsion of a given order in magnitude homology.

\begin{thm}\label{RPKtwotorsion}
For any odd integer $k \ge 3$, there is a graph $G$ such that $\textnormal{MH}_{k,k+1}(G)$ contains a subgroup isomorphic to $\Z_2$.
\end{thm}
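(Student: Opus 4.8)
The plan is to use the Kaneta--Yoshinaga embedding of Theorem \ref{YKembeddinglemma} with a carefully chosen triangulation of real projective space. The key observation is that $\widetilde{H}_{n-1}(\mathbb{RP}^n) \cong \Z_2$ whenever $n$ is even (and more generally $H_i(\mathbb{RP}^n;\Z)$ has a $\Z_2$ in every odd degree $i < n$, plus in degree $n$ when $n$ is even). So if we take $K$ to be a triangulation of $M = \mathbb{RP}^n$ in which each face lies in some $n$-simplex, Theorem \ref{YKembeddinglemma} gives an embedding $\widetilde{H}_{k-2}(\mathbb{RP}^n) \hookrightarrow \textnormal{MH}_{k,\ell}(G(K))$ where $\ell = d(\hat 0,\hat 1)$ in $\widehat{P(K)}$. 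Choosing $k-2 = n-1$, i.e. $k = n+1$ with $n$ even, the left-hand group is $\Z_2$; since $n$ even means $k = n+1$ is odd, this will realize $\Z_2$ in bigrading $(k,\ell)$ for every odd $k \ge 3$, provided we can arrange $\ell = k+1$.

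First I would record the homology computation $\widetilde{H}_{n-1}(\mathbb{RP}^n;\Z) = \Z_2$ for $n$ even, so that with $n = k-1$ the reduced homology group $\widetilde{H}_{k-2}(\mathbb{RP}^{k-1})$ is exactly $\Z_2$. Next I would pin down $\ell = d(\hat 0,\hat 1)$. In $\widehat{P(K)}$ the minimum $\hat 0$ is (or is adjoined below) the empty face and the maximum $\hat 1$ is adjoined above all the top-dimensional faces; a path of minimal length from $\hat 0$ to $\hat 1$ goes $\hat 0 \to (\text{vertex}) \to (\text{edge}) \to \cdots \to (n\text{-simplex}) \to \hat 1$, which has length $n+2 = k+1$ once we check that this poset does not already possess a maximum (true as soon as $K$ has more than one top-dimensional simplex, which any triangulation of a closed manifold of dimension $\ge 1$ does). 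This gives $\ell = k+1$ as desired.

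The remaining point is simply to exhibit, for each even $n$, a triangulation $K$ of $\mathbb{RP}^n$ with the property that every face is contained in an $n$-simplex — equivalently, $K$ is a pure $n$-dimensional simplicial complex. This is automatic: any triangulation of a closed $n$-manifold is pure, since every simplex is a face of some maximal simplex and maximal simplices are $n$-dimensional (a top face not contained in an $n$-simplex would produce a boundary or lower-dimensional point, contradicting the manifold condition). Concretely one may take iterated barycentric subdivisions of any fixed triangulation of $\mathbb{RP}^n$, or cite the existence of (minimal) triangulations in the literature. Then $G = G(K)$ works. The main obstacle is not any single step but making sure the distance computation $d(\hat 0,\hat 1) = n+2$ is argued carefully — in particular that no shorter path exists and that the adjoined maximum is genuinely needed so that the Hasse diagram has the claimed diameter between $\hat 0$ and $\hat 1$; everything else (the homology of $\mathbb{RP}^n$, purity of manifold triangulations, and the existence of such triangulations) is standard and can be invoked directly, and the torsion subgroup is then delivered verbatim by Theorem \ref{YKembeddinglemma}.
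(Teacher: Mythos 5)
Your proposal is correct and takes essentially the same route as the paper: set $n=k-1$ (even), use $\widetilde{H}_{k-2}(\mathbb{RP}^{k-1})\cong\Z_2$, and apply Theorem \ref{YKembeddinglemma} to a triangulation of $\mathbb{RP}^{k-1}$, with $d(\hat 0,\hat 1)=k+1$. The paper's proof is a two-line version of this; your added checks (purity of manifold triangulations and the rank argument showing $d(\hat 0,\hat 1)=n+2$) are correct and merely make explicit what the paper leaves implicit.
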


\begin{proof}
$\widetilde{H}_{k-2}(\R\mathbb{P}^{k-1}) \cong \Z_2$ and for a triangulation $K$ of   \, $\mathbb{R}\mathbb{P}^{k-1}$, Theorem \ref{YKembeddinglemma} gives an embedding
$\widetilde{H}_{k-2}(\mathbb{R}\mathbb{P}^{k-1}) \hookrightarrow \textnormal{MH}_{k,k+1}(G(K))$.
\end{proof}

For coprime integers $p$ and $q$, the lens space $L(p,q)$ is a three-dimensional triangulable manifold. Since the fundamental group (hence first homology group) of $L(p,q)$ is isomorphic to $\Z_p$, it follows by Kaneta and Yoshinaga's embedding Theorem \ref{YKembeddinglemma} that torsion of a given order can show up in the magnitude homology of a graph. However, this approach would only guarantee the existence of graphs with torsion in polynomial degree $5$. Using generalized lens spaces, we can prove a more general result.

\begin{defn}[Generalized lens space, \cite{li2007fundamental}]
Let $S^{2n+1} = \{ (z_0,z_1,\ldots,z_n) \in \C^{n+1} \ : \ \sum_{i=0}^n |z_i|^2 = 1\}$ be the unit sphere in $C^{n+1}$. Let $p,q_1,q_2,\ldots,q_n$ be integers with $\textnormal{gcd}(p,q_i)=1$ for each $1 \le i \le n$. Consider the action of $\Z_p$ on $S^{2n+1}$ defined for each $g \in \Z_p$ by
\[ g \cdot (z_0,z_1,\ldots,z_n) = \left( z_0e^{\frac{2\pi i g}{p}},z_1e^{\frac{2\pi i g q_1}{p}},z_2e^{\frac{2\pi i g q_2}{p}},\ldots,z_ne^{\frac{2\pi i g q_n}{p}} \right). \]
The generalized lens space $L(p,q_1,q_2,\ldots,q_n)$ is the quotient space $S^{2n+1}/\Z_p$.
\end{defn}

The generalized lens space $L(p,q_1,q_2,\ldots,q_n)$ is  a triangulable $(2n+1)$-dimensional manifold with fundamental group isomorphic to $\Z_p$ \cite{li2007fundamental,rubinstein2018generalized}.

\begin{thm}\label{ptorsioninMH}
For each prime $p$ and positive integer $r$, there is a graph with $\Z_{p^r}$ torsion in its magnitude homology. More specifically, for integers $n,r \ge 1$ and each prime $p$, there is a graph $G$ such that $\textnormal{MH}_{3,2n+3}(G)$ contains $\Z_{p^r}$ torsion.
\end{thm}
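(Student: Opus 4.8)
The plan is to feed a suitable \emph{generalized lens space} into the Kaneta--Yoshinaga embedding of Theorem \ref{YKembeddinglemma}, exploiting the fact that the polynomial degree of the target bigrading is governed by the dimension of the triangulated manifold. Fix a prime $p$ and integers $n,r \ge 1$. Take $L = L(p^r,1,1,\ldots,1)$ (with $n$ entries equal to $1$), a generalized lens space of dimension $2n+1$; this is a closed, triangulable manifold with $\pi_1(L) \cong \Z_{p^r}$, and since $L$ is connected and $\pi_1(L)$ is already abelian, $\widetilde H_1(L) = H_1(L) \cong \Z_{p^r}$. Let $K$ be any triangulation of $L$. Because $L$ is a closed manifold, $K$ is a pure $(2n+1)$-dimensional complex, i.e.\ every face lies in a top-dimensional simplex, so the hypotheses needed to form $G(K)$ are satisfied.

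Next I would pin down $\ell = d(\hat 0,\hat 1)$ in $\widehat{P(K)}$. In the Hasse diagram of $\widehat{P(K)}$ the only neighbours of $\hat 0$ are the vertices of $K$, the only neighbours of $\hat 1$ are the $(2n+1)$-simplices of $K$, and every edge joins faces whose dimensions differ by exactly one. Hence any walk from $\hat 0$ to $\hat 1$ must change ``rank'' by $\pm 1$ at each step, and so has length at least $(2n+1)+2 = 2n+3$; a maximal flag $\hat 0 < v < \sigma_1 < \sigma_2 < \cdots < \sigma_{2n+1} < \hat 1$ in $\widehat{P(K)}$ attains this bound, so $\ell = 2n+3$. (Here one uses that a triangulation of a positive-dimensional closed manifold has more than one top-dimensional simplex, so $P(K)$ genuinely requires a maximum to be adjoined; this is exactly the case $d(\hat 0,\hat 1) = m+2$ with $m = \dim L$.)

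Finally, applying Theorem \ref{YKembeddinglemma} with $M = L$ and $k = 3$ yields an embedding
\[
\Z_{p^r} \;\cong\; \widetilde H_{1}(L) \;=\; \widetilde H_{k-2}(M) \;\hookrightarrow\; \textnormal{MH}_{k,\ell}(G(K)) \;=\; \textnormal{MH}_{3,\,2n+3}(G(K)),
\]
so $G := G(K)$ has the desired property, and letting $n$ and $r$ range over all positive integers also settles the first sentence of the statement. There is no genuinely hard step once Theorem \ref{YKembeddinglemma} is granted: the only points needing care are the existence and triangulability of a generalized lens space with fundamental group $\Z_{p^r}$ in each odd dimension $2n+1$, which we may quote from \cite{li2007fundamental,rubinstein2018generalized}, and the elementary poset computation $d(\hat 0,\hat 1) = \dim L + 2$. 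The mild ``defect'' of the argument is that torsion is produced only in bigradings of the form $(3,2n+3)$ rather than an arbitrary one, but that is precisely the shape asserted by the theorem.
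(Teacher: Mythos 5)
Your proposal is correct and follows essentially the same route as the paper: triangulate a generalized lens space $L(p^r,q_1,\ldots,q_n)$ of dimension $2n+1$ with $\pi_1 \cong H_1 \cong \Z_{p^r}$ and apply Theorem \ref{YKembeddinglemma} with $k=3$, where $\ell = d(\hat 0,\hat 1)=2n+3$. Your explicit verification that $d(\hat 0,\hat 1)=\dim L+2$ via the ranked structure of the Hasse diagram is a detail the paper leaves implicit, but the argument is the same.
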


\begin{proof}
Let $G(K)$ the graph obtained from a triangulation $K$ of the generalized lens space $L(p^r, q_1,q_2,\ldots,q_n)$. By Theorem \ref{YKembeddinglemma}, there is an embedding
\[ \Z_{p^r} \cong \pi_1(L(p^r, q_1,q_2,\ldots,q_n)) \cong \widetilde{H}_1(L(p^r, q_1,q_2,\ldots,q_n)) \hookrightarrow \textnormal{MH}_{3,2n+3}(G(K)). \]
\end{proof}

\begin{defn}
Let $r \ge 1$ be a positive integer and $G$ be a graph. The $r^{th}$ diagonal of the magnitude homology of $G$ is the sequence of groups $(\textnormal{MH}_{\ell-r+1, \ell}(G))_{\ell \ge r-1}$. We refer to the $1^{st}$ diagonal as the \textit{main} diagonal.
\end{defn}

Note that the proof of Theorem \ref{RPKtwotorsion} shows there can be torsion of order two in any bigrading along the second diagonal of magnitude homology, while Theorem \ref{ptorsioninMH} shows that, for any odd integer $r \ge 3$, torsion of arbitrary prime order can be found in homological degree $3$ of the $r^{th}$ diagonal.

So far we have constructed a single graph with torsion of a desired order. In the remainder of this section we provide affirmative answers  the following questions:
\begin{itemize}
    \item[A)] Is there a graph with torsion in magnitude homology which is not obtained from a triangulation via the method of Kaneta and Yoshinaga?
  \item[B)] Can we produce entire families of graphs with torsion of a given prime order?
\end{itemize}

\begin{prop}
There is a graph $G$, not obtained from a triangulation via the Kaneta-Yoshinaga construction, with torsion of order two in magnitude homology.
\end{prop}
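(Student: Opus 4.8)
The plan is to take a Kaneta--Yoshinaga graph that already carries $2$-torsion and to modify it so as to keep the torsion while destroying a structural feature common to all such graphs. Every $G(K')$ is, by construction, the underlying graph of the Hasse diagram of the bounded graded poset $\widehat{P(K')}$, and such a graph is connected (each element lies on a saturated chain from $\hat{0}$ to $\hat{1}$) and triangle-free: three pairwise--cover-related elements would form a chain $a\lessdot b\lessdot c$, and then the intermediate element $b$ prevents $\{a,c\}$ from being a covering edge. So any graph with $2$-torsion in magnitude homology that is disconnected, or that contains a triangle, already proves the statement.

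A disconnected example is immediate: let $K$ be a minimal triangulation of $\R\mathbb{P}^2$, so that $\MH_{3,4}(G(K))\supseteq\Z_2$ by Example \ref{exYKRP2}, and take $G=G(K)\sqcup\{*\}$. Because a $k$-path has all consecutive distances finite, it lies in a single connected component, so the magnitude chain complex of a disjoint union is the direct sum of the two complexes; hence $\MH_{3,4}(G)\cong\MH_{3,4}(G(K))\supseteq\Z_2$, while $G$ is disconnected. To get a \emph{connected} witness I would instead attach a triangle and use Mayer--Vietoris. Fix a vertex $v$ of $K$, adjoin a new vertex $w$ adjacent to both $\hat{0}$ and $v$, let $T$ be the triangle on $\{\hat{0},v,w\}$, and set $G=G(K)\cup T$. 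I would then check that $(G;G(K),T)$ is a projecting decomposition: $G(K)\cap T$ is the edge $\{\hat{0},v\}$, which is still a geodesic in $G$ and hence convex, and $G(K)$ projects onto this edge, each element of $\widehat{P(K)}$ lying (weakly) above $v$ --- including $\hat{1}$ --- projecting to $v$, and every other element, in particular $\hat{0}$, projecting to $\hat{0}$. Verifying the projection identity is a short computation with the ranks and geodesics of $\widehat{P(K)}$, and it is essentially the only real work.

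Granting this, the split short exact sequence (\ref{MVSequence}) gives $\MH_{*,*}(G(K))\oplus\MH_{*,*}(T)\cong\MH_{*,*}(\{\hat{0},v\})\oplus\MH_{*,*}(G)$. In bidegree $(3,4)$ both outer terms vanish, since the edge $K_2$ has no $3$-path of length $4$ and every $3$-path in $T\cong C_3$ has length $3$; therefore $\MH_{3,4}(G)\cong\MH_{3,4}(G(K))\supseteq\Z_2$. Since $G$ contains the triangle $T$ it is not the Hasse diagram of any poset, so $G\ne G(K')$ for every simplicial complex $K'$, and the statement follows with a connected graph. I expect the only genuine obstacle to be the projecting-decomposition check; this is a bookkeeping exercise rather than a conceptual one, and the triangle-freeness of Kaneta--Yoshinaga graphs together with the vanishing of $\MH_{3,4}$ for $K_2$ and $C_3$ are all immediate.
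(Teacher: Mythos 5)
Your proof is correct, but it proves the proposition by an entirely different route than the paper. The paper's argument is computational: it takes the graph $G(K)$ of Figure \ref{G(RP2)}, adds a single edge, verifies by machine computation (Hepworth--Willerton's Python program, comparing ranks over $\Q$ and $\Z_2$) that the $\Z_2$ torsion survives, and certifies that the new graph is not of the form $G(K')$ by exhibiting a forbidden five-vertex subgraph. You instead observe that every Kaneta--Yoshinaga graph is connected and triangle-free (both facts are correct: underlying graphs of Hasse diagrams cannot contain triangles, since a tournament on three pairwise cover-related elements is either cyclic, contradicting antisymmetry, or transitive, contradicting that the long relation is a cover), and then produce witnesses violating these properties while preserving the torsion --- a disjoint union for the cheap disconnected example, and a triangle glued along the edge $\{\hat 0,v\}$ plus Mayer--Vietoris for the connected one. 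The one step you defer, that $G(K)$ projects onto the edge $\{\hat 0,v\}$, does go through, and more cleanly than you suggest: $G(K)$ is bipartite with respect to rank parity, so $d(x,\hat 0)$ and $d(x,v)$ always differ by exactly $1$ (they differ by at least $1$ by parity and by at most $1$ by the triangle inequality), which is precisely the projection identity with $\pi(x)$ the nearer of $\hat 0$ and $v$; note also that the new vertex $w$ creates no shortcuts since $\hat 0$ and $v$ are already adjacent, and that the decomposition must be ordered as $(G;G(K),T)$ since $C_3$ does not project onto an edge --- which you do. The trade-off: the paper's proof yields the extra empirical observations that deleting an edge kills the torsion while some edge additions preserve it, whereas your argument is computer-free, works verbatim for any $G(K)$ carrying torsion (hence for any order of torsion, not just $2$), and is arguably a more satisfying proof of the stated proposition.
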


\begin{proof}
The program rational\_graph\_homology\_arxiv.py was uploaded along with Hepworth and Willerton's paper \cite{hepworth2017categorifying}, and can be used to calculate magnitude homology groups over $\Q$ and over finite fields $\Z_p$. Calculations using this program show that removing a single edge from the graph of Figure \ref{G(RP2)} produces a graph whose magnitude homology has the same rank over $\Q$ and over $\Z_2$ coefficients. Consequently, removing a single edge destroys the property leading to $\Z_2$ torsion in the magnitude homology of this graph. On the other hand, adding a single edge to the same graph sometimes vanishes the $\Z_2$ torsion and sometimes does not. For instance, the graph in Figure \ref{NotYKGraph} contains $\Z_2$ torsion.
\end{proof}

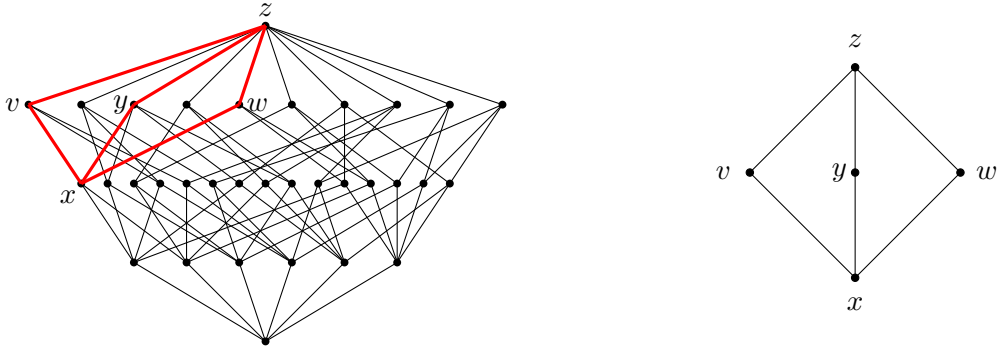
\begin{figure}[h]
\begin{subfigure}{0.48\textwidth}
\begin{center}
\begin{tikzpicture}[scale=.35]
\fill (9,0) circle (1.5mm);

\draw (9,0)--(4,3);
\draw (9,0)--(6,3);
\draw (9,0)--(8,3);
\draw (9,0)--(10,3);
\draw (9,0)--(12,3);
\draw (9,0)--(14,3);

\fill (4,3) circle (1.5mm);
\fill (6,3) circle (1.5mm);
\fill (8,3) circle (1.5mm);
\fill (10,3) circle (1.5mm);
\fill (12,3) circle (1.5mm);
\fill (14,3) circle (1.5mm);

\draw (2,6)--(4,3);
\draw (3,6)--(4,3);
\draw (5,6)--(4,3);
\draw (8,6)--(4,3);
\draw (12,6)--(4,3);

\draw (4,6)--(6,3);
\draw (6,6)--(6,3);
\draw (9,6)--(6,3);
\draw (13,6)--(6,3);
\draw (2,6)--(6,3);

\draw (3,6)--(8,3);
\draw (4,6)--(8,3);
\draw (7,6)--(8,3);
\draw (10,6)--(8,3);
\draw (14,6)--(8,3);

\draw (5,6)--(10,3);
\draw (6,6)--(10,3);
\draw (7,6)--(10,3);
\draw (11,6)--(10,3);
\draw (15,6)--(10,3);

\draw (8,6)--(12,3);
\draw (9,6)--(12,3);
\draw (10,6)--(12,3);
\draw (11,6)--(12,3);
\draw (16,6)--(12,3);

\draw (16,6)--(14,3);
\draw (15,6)--(14,3);
\draw (14,6)--(14,3);
\draw (13,6)--(14,3);
\draw (12,6)--(14,3);

\draw (0,9)--(2,6);
\draw (8,9)--(2,6);

\draw (2,9)--(3,6);
\draw (4,9)--(3,6);

\draw (6,9)--(4,6);
\draw (10,9)--(4,6);

\draw (0,9)--(5,6);
\draw (2,9)--(5,6);

\draw (0,9)--(6,6);
\draw (14,9)--(6,6);

\draw (2,9)--(7,6);
\draw (16,9)--(7,6);

\draw (4,9)--(8,6);
\draw (12,9)--(8,6);

\draw (6,9)--(9,6);
\draw (14,9)--(9,6);

\draw (4,9)--(10,6);
\draw (6,9)--(10,6);

\draw (14,9)--(11,6);
\draw (18,9)--(11,6);

\draw (8,9)--(12,6);
\draw (12,9)--(12,6);

\draw (8,9)--(13,6);
\draw (10,9)--(13,6);

\draw (10,9)--(14,6);
\draw (16,9)--(14,6);

\draw (16,9)--(15,6);
\draw (18,9)--(15,6);

\draw (12,9)--(16,6);
\draw (18,9)--(16,6);

\draw (0,9)--(9,12);
\draw (2,9)--(9,12);
\draw (4,9)--(9,12);
\draw (6,9)--(9,12);
\draw (8,9)--(9,12);
\draw (10,9)--(9,12);
\draw (12,9)--(9,12);
\draw (14,9)--(9,12);
\draw (16,9)--(9,12);
\draw (18,9)--(9,12);

\fill (2,6) circle (1.5mm);
\fill (3,6) circle (1.5mm);
\fill (4,6) circle (1.5mm);
\fill (5,6) circle (1.5mm);
\fill (6,6) circle (1.5mm);
\fill (7,6) circle (1.5mm);
\fill (8,6) circle (1.5mm);
\fill (9,6) circle (1.5mm);
\fill (10,6) circle (1.5mm);
\fill (11,6) circle (1.5mm);
\fill (12,6) circle (1.5mm);
\fill (13,6) circle (1.5mm);
\fill (14,6) circle (1.5mm);
\fill (15,6) circle (1.5mm);
\fill (16,6) circle (1.5mm);

\fill (0,9) circle (1.5mm);
\fill (2,9) circle (1.5mm);
\fill (4,9) circle (1.5mm);
\fill (6,9) circle (1.5mm);
\fill (8,9) circle (1.5mm);
\fill (10,9) circle (1.5mm);
\fill (12,9) circle (1.5mm);
\fill (14,9) circle (1.5mm);
\fill (16,9) circle (1.5mm);
\fill (18,9) circle (1.5mm);

\fill (9,12) circle (1.5mm);



\node at (-.6,9){$v$};
\node at (8.7,9){$w$};
\node at (1.5,5.5){$x$};
\node at (3.5,9){$y$};
\node at (9,12.6){$z$};
\draw[red,very thick] (2,6)--(4,9);
\draw[red, very thick] (4,9)--(9,12);
\draw[red,very thick] (2,6)--(0,9);
\draw[red,very thick] (0,9)--(9,12);
\draw[red, very thick] (2,6)--(8,9);
\draw[red, very thick] (8,9)--(9,12);

\end{tikzpicture}
\end{center}
\label{nonYKgraphwithtorsion}
\end{subfigure}
\begin{subfigure}{0.48\textwidth}
\begin{center}
\begin{tikzpicture}[scale=.7]

\draw[fill=black] (2,0) circle (.7mm);
\node at (2,-.5){$x$};

\draw[fill=black] (4,2) circle (.7mm);
\node at (4.5,2){$w$};

\draw[fill=black] (0,2) circle (.7mm);
\node at (-.5,2){$v$};

\draw[fill=black] (2,4) circle (.7mm);
\node at (2,4.5){$z$};

\draw[fill=black] (2,2) circle (.7mm);
\node at (1.7,2){$y$};

\draw (2,0)--(2,2);
\draw (2,2)--(2,4);
\draw (2,0)--(0,2);
\draw (2,0)--(4,2);
\draw (0,2)--(2,4);
\draw (4,2)--(2,4);

\end{tikzpicture}
\end{center}
\end{subfigure}
\caption{A graph obtained from that in Figure \ref{G(RP2)} by adding the single edge $\{x,y\}$. This graph cannot be obtained via the Kaneta-Yoshinaga construction due to the presence of the forbidden 5-vertex subgraph with vertices $v,w,x,y$ and $z$ (red).}
\label{NotYKGraph}
\end{figure}

\subsection{Infinite families of graphs with $\Z_{p^m}$ torsion in magnitude homology}\label{InfiniteFamiliesWithPTorsion}

In this section we show that there are infinitely many graphs containing torsion of a prescribed order.

\begin{defn}[\cite{Pachner1991Moves}]
Let $K$ be a triangulation of an $m$-manifold $M$. Let $A \subset K$ be a subcomplex of dimension $m$, and let $\varphi : A \rightarrow A' \subset \partial \Delta^{m+1}$ be a simplicial isomorphism. The \textit{Pachner move} associated to the triple $(K,A,\varphi)$ is the adjunction space
\[
	P_{\varphi}K := (K-A) 		\sqcup_{\varphi} (\partial\Delta^{m+1}-A')
\]
\end{defn}

In dimension $1$, Pachner moves consist of either subdividing an edge into two edges, or the reverse. Pachner moves in dimension $2$ are illustrated in Figure \ref{Fig: PachnerMoves}.

\begin{figure}[H]
\begin{center}
\begin{tikzpicture}[scale=1]


\filldraw[draw=black, fill=red!25] (0,3.55)--(2,3.55)--(1,5.28)--cycle;

\filldraw[draw=black, fill=red!25] (4.5,3.55)--(6.5,3.55)--(5.5,5.28)--cycle;
\draw (5.5,4.15)--(4.5,3.55);
\draw (5.5,4.15)--(6.5,3.55);
\draw (5.5,4.15)--(5.5,5.28);


\draw[->,decorate,decoration={snake}] (2.3,4.8)--(4.2,4.8) node[midway,above] {$P_1$};
\draw[->,decorate,decoration={snake}] (4.2,4.2)--(2.3,4.2);
\node at (3.35,3.6){$P_1^{-1}$};


\draw[dashed] (7.5,3.2)--(7.5,5.8);


\filldraw[draw=black, fill=red!25] (8.5,3.55)--(10.23,3.55)--(10.23,5.28)--(8.5,5.28)--cycle;
\draw (8.5,3.55)--(10.23,5.28);

\filldraw[draw=black, fill=red!25] (12.93,3.55)--(14.7,3.55)--(14.7,5.28)--(12.93,5.28)--cycle;
\draw (14.7,3.55)--(12.93,5.28);

\draw[->,decorate,decoration={snake}] (10.6,4.8)--(12.5,4.8) node[midway,above] {$P_2$};
\draw[->,decorate,decoration={snake}] (12.5,4.2)--(10.6,4.2);
\node at (11.65,3.6){$P_2^{-1}$};
\end{tikzpicture}
\caption{Pachner moves on simplicial complexes of dimension $2$.}
\label{Fig: PachnerMoves}
\end{center}
\end{figure}
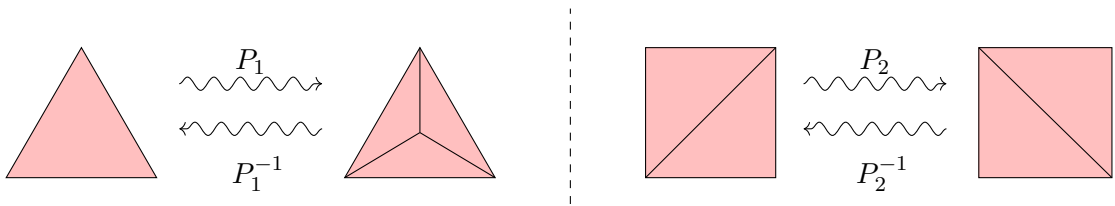

\vspace{-7mm}

Pachner moves send simplicial complexes to simplicial complexes, and preserve the underlying manifold in the sense that applying a Pachner move leaves the homeomorphism class of the geometric realization unchanged \cite{Pachner1991Moves}. Applying a Pachner move to a simplicial complex gives rise to a corresponding change on the level of graphs: $G(K) \mapsto G(P_{\varphi}K)$. As a consequence of Theorem \ref{YKembeddinglemma} we have the following.

\begin{cor}\label{PachnerInvariance}
Let $K$ and $K'$ be triangulations of a manifold $M$ related by a finite sequence of Pachner moves. For each $k \ge 1$, both $\textnormal{MH}_{k,*}(G(K))$ and $\textnormal{MH}_{k,*}(G(K'))$ contain a subgroup isomorphic to $\widetilde{H}_{k-2}(M)$.
\end{cor}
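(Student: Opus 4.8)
The plan is to treat this as an essentially immediate consequence of Theorem~\ref{YKembeddinglemma}, applied once to $K$ and once to $K'$; the work is all in checking that the Kaneta--Yoshinaga construction is available in both cases and that $K$ and $K'$ really are triangulations of the same manifold $M$. First I would record that any triangulation $K$ of an $m$-manifold $M$ automatically satisfies the hypothesis of the Kaneta--Yoshinaga construction, namely that $K$ is an $m$-dimensional simplicial complex each of whose faces is a face of some $m$-simplex. This is the purity of triangulated manifolds: a point in the relative interior of a face $\sigma$ of dimension $<m$ has a Euclidean neighbourhood in $|K|$, which forces $\sigma$ to lie in a top-dimensional simplex. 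Hence $G(K)$, and likewise $G(K')$, is defined.

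Next I would invoke the fact, due to Pachner \cite{Pachner1991Moves} and recalled just above, that a Pachner move sends a simplicial complex to a simplicial complex and leaves the homeomorphism class of the geometric realization unchanged. Iterating along the given finite sequence yields $|K'| \cong |K| \cong M$, so $K'$ is again a triangulation of $M$. Note that no property of the graphs $G(K)$, $G(K')$ themselves is needed at this stage --- the sole role of the Pachner moves in the hypothesis is to guarantee that we are still looking at triangulations of the same manifold. Applying Theorem~\ref{YKembeddinglemma} to $K$ with $\ell = d(\hat 0,\hat 1)$ computed in $\widehat{P(K)}$ then gives, for every $k \ge 1$, an embedding $\widetilde{H}_{k-2}(M) \hookrightarrow \textnormal{MH}_{k,\ell}(G(K))$, and applying it to $K'$ with the analogous value $\ell'$ computed in $\widehat{P(K')}$ gives $\widetilde{H}_{k-2}(M) \hookrightarrow \textnormal{MH}_{k,\ell'}(G(K'))$. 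Since the conclusion only asserts the existence of such a subgroup somewhere in the row $\textnormal{MH}_{k,*}$, this finishes the proof.

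There is no hard step here; the only genuinely delicate point --- and the reason the statement is phrased with a $*$ in the second grading rather than a fixed polynomial degree --- is that the distance $d(\hat 0,\hat 1)$ need not be invariant under a Pachner move. For instance, a move can alter whether $P(K)$ already has a maximum element, or change the length of a shortest $\hat 0$--$\hat 1$ chain in the Hasse diagram, so the polynomial degree at which the subgroup $\widetilde{H}_{k-2}(M)$ appears may differ between $G(K)$ and $G(K')$. Thus one should resist the temptation to strengthen the conclusion to a common bigrading, and instead state it diagonal-by-diagonal (in $k$) as above.
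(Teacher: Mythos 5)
Your proof is correct and follows the same route the paper takes: the paper gives no explicit argument beyond noting (just before the corollary) that Pachner moves preserve the homeomorphism class of the geometric realization, so that $K'$ is again a triangulation of $M$ and Theorem~\ref{YKembeddinglemma} applies separately to $K$ and to $K'$, exactly as you do. Your added remarks on purity of manifold triangulations and on why the second grading is left as $*$ are sensible refinements of the same argument rather than a different approach.
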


\begin{thm}\label{InfinitelyMany2Torsion}
Let $k \ge 3$ be an integer. There exist infinitely many distinct classes of graphs whose magnitude homology contains $\Z_2$ torsion in bigrading $(k,k+1)$.
\end{thm}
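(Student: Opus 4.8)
The plan is to realize the prescribed $\Z_2$ torsion not in a single graph but in an infinite sequence of graphs $G(K_0),G(K_1),G(K_2),\dots$ arising from triangulations $K_j$ of one fixed manifold, where each $K_j$ is obtained from its predecessor by a Pachner move. Corollary \ref{PachnerInvariance} then produces the torsion in all of them at once, so the remaining work is only to (i) choose the manifold so that the relevant bigrading is exactly $(k,k+1)$, and (ii) ensure the resulting graphs are pairwise non-isomorphic.

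For step (i) I would look for a closed, triangulable manifold $M_k$ of dimension $k-1$ with $\widetilde H_{k-2}(M_k;\Z)\cong\Z_2$. Since $M_k$ is a closed manifold of dimension $\ge 1$, none of its triangulations consists of a single top-dimensional simplex, so any triangulation $K$ of $M_k$ has $d(\hat0,\hat1)=(k-1)+2=k+1$; Theorem \ref{YKembeddinglemma} then yields $\Z_2\cong\widetilde H_{k-2}(M_k)\hookrightarrow\textnormal{MH}_{k,k+1}(G(K))$. For odd $k$ I would take $M_k=\mathbb{R}\mathbb{P}^{k-1}$: then $k-2$ is odd and lies strictly between $0$ and $k-1$, so $\widetilde H_{k-2}(\mathbb{R}\mathbb{P}^{k-1})\cong\Z_2$ — this is the torsion already used in the proof of Theorem \ref{RPKtwotorsion}. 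For even $k$, $\mathbb{R}\mathbb{P}^{k-1}$ fails, its codimension-one integral homology being torsion-free; instead I would take $M_k=\mathbb{R}\mathbb{P}^{k-2}\times S^1$, a closed triangulable $(k-1)$-manifold, and read off $\widetilde H_{k-2}$ from the K\"unneth theorem: the term $H_{k-2}(\mathbb{R}\mathbb{P}^{k-2})\otimes H_0(S^1)$ vanishes because $k-2$ is even, the Tor terms vanish because $S^1$ has free homology, and the surviving term is $H_{k-3}(\mathbb{R}\mathbb{P}^{k-2})\otimes H_1(S^1)\cong\Z_2$.

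For step (ii) I would fix one triangulation $K_0$ of $M_k$ and let $K_j$ be obtained from $K_0$ by applying the $1\to k$ Pachner move — starring a single $(k-1)$-simplex at a new interior vertex — a total of $j$ times. Each such move is admissible on any top simplex, preserves the homeomorphism type, and adds exactly one vertex to the triangulation, so $|V(K_j)|=|V(K_0)|+j$; since the vertices of $K_j$ are among its faces and hence among the vertices of $G(K_j)$, we get $|V(G(K_j))|\ge|V(K_0)|+j\to\infty$, forcing the $G(K_j)$ into infinitely many isomorphism classes. Each $K_j$ is still a triangulation of $M_k$ with more than one top simplex, so by step (i) the group $\textnormal{MH}_{k,k+1}(G(K_j))$ contains a subgroup isomorphic to $\Z_2$ for every $j$, which is what we want.

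The one genuinely non-formal point is the even-$k$ case: one has to produce a closed $(k-1)$-manifold whose $\Z_2$ torsion lives in homological degree exactly one below the dimension, so that the Kaneta and Yoshinaga embedding lands in bigrading $(k,k+1)$ rather than somewhere else; $\mathbb{R}\mathbb{P}^{k-2}\times S^1$ is the natural candidate and a short computation confirms it works. Everything else is routine bookkeeping: the admissibility and vertex-count growth of the $1\to k$ Pachner move, the standard fact that every face of a triangulated closed manifold lies in a top-dimensional simplex (so each $G(K_j)$ is a legitimate input to the Kaneta and Yoshinaga construction), and the Pachner invariance of the embedded subgroup, i.e.\ Corollary \ref{PachnerInvariance}.
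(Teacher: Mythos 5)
Your proposal is correct and follows the same overall strategy as the paper: fix a closed manifold with the required torsion, generate infinitely many triangulations of it by iterated Pachner moves (the starring move, which strictly increases the vertex count and hence forces the graphs $G(K_j)$ into distinct isomorphism classes), and then invoke Theorem \ref{YKembeddinglemma} together with Corollary \ref{PachnerInvariance}. Where you genuinely depart from the paper is in the choice of manifold, and this is not a cosmetic difference: the paper's proof uses a triangulation of $\mathbb{R}\mathbb{P}^{k-1}$ for \emph{every} $k \ge 3$, but $\widetilde{H}_{k-2}(\mathbb{R}\mathbb{P}^{k-1}) \cong \Z_2$ only when $k-2$ is odd, i.e.\ only for odd $k$ (consistent with Theorem \ref{RPKtwotorsion}, which is correctly restricted to odd $k$). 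For even $k \ge 4$ one has $\widetilde{H}_{k-2}(\mathbb{R}\mathbb{P}^{k-1}) = 0$, so the paper's argument as written does not establish the stated result in those degrees. Your substitute $M_k = \mathbb{R}\mathbb{P}^{k-2} \times S^1$ for even $k$ is a closed triangulable $(k-1)$-manifold, and your K\"unneth computation giving $\widetilde{H}_{k-2}(M_k) \cong H_{k-3}(\mathbb{R}\mathbb{P}^{k-2}) \otimes H_1(S^1) \cong \Z_2$ is correct (note $k-3$ is odd and lies strictly below the top dimension $k-2$, so that homology group is indeed $\Z_2$). Your check that $d(\hat{0},\hat{1}) = k+1$ for any triangulation of a closed $(k-1)$-manifold, so that the embedding lands in bigrading $(k,k+1)$, is also right and is a detail the paper leaves implicit. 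In short, your argument buys the full statement for all $k \ge 3$, whereas the paper's own proof only covers odd $k$; the rest of the two arguments coincide.
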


\begin{proof}
Let $K$ be a triangulation of $\mathbb{R}\mathbb{P}^{k-1}$ and define a sequence of graphs $(G_r)_{r \in \mathbb{N}}$ as follows. Let $K_1=K$ and for $r \ge 1$, define $K_{r+1} = P_{\varphi_r}K_r$ where $\varphi_r : A_r \rightarrow \partial \Delta^k$ is a simplicial isomorphism and $A_r$ is a $(k-1)$-simplex of $K_r$. Now set $G_r=G(K_r)$. These graphs are mutually distinct because, for example, the $\varphi_r$ have been chosen so that the number of simplices in $K_{r+1}$ is strictly greater than in $K_r$, and correspondingly we have $|V(G(K_{r+1})| > |V(G(K_r))|$. For each $r \ge 1$, $\textnormal{MH}_{k,k+1}(G_r)$ contains a subgroup isomorphic to $\Z_2$ by Theorem \ref{YKembeddinglemma} and Corollary \ref{PachnerInvariance}.
\end{proof}

\begin{thm}\label{InfinitelyManypTorsion}
Let $p$ be a prime and $n,m \ge 1$ integers. There exist infinitely many distinct isomorphism classes of graphs whose magnitude homology contains $\Z_{p^m}$ torsion in bigrading $(3,2n+3)$. 
\end{thm}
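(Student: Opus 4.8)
The plan is to run the argument of Theorem~\ref{InfinitelyMany2Torsion} essentially verbatim, but with the real projective space $\mathbb{R}\mathbb{P}^{k-1}$ replaced by a generalized lens space, so that the $\Z_2$ becomes $\Z_{p^m}$ and the bigrading is pinned to $(3,2n+3)$ exactly as in the proof of Theorem~\ref{ptorsioninMH}. Fix a prime $p$ and integers $n,m\ge 1$, choose $q_1,\dots,q_n$ coprime to $p^m$, and set $L=L(p^m,q_1,\dots,q_n)$, a triangulable closed $(2n+1)$-manifold with $\widetilde H_1(L)\cong\pi_1(L)\cong\Z_{p^m}$. Let $K_1$ be any triangulation of $L$, and recursively define $K_{r+1}=P_{\varphi_r}K_r$, where $A_r$ is a single $(2n+1)$-simplex of $K_r$ and $\varphi_r:A_r\to A_r'\subset\partial\Delta^{2n+2}$ identifies it with a facet of $\partial\Delta^{2n+2}$; this is the higher-dimensional analogue of the move $P_1$ of Figure~\ref{Fig: PachnerMoves}, which replaces one top simplex by $2n+2$ of them and introduces exactly one new vertex, so in particular $K_{r+1}$ has strictly more simplices than $K_r$. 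Put $G_r=G(K_r)$.

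First I would record that each $G_r$ carries $\Z_{p^m}$ torsion in the desired bigrading. By Theorem~\ref{YKembeddinglemma} there is an embedding $\widetilde H_1(L)\hookrightarrow\textnormal{MH}_{3,\ell_r}(G_r)$, where $\ell_r=d(\hat 0,\hat 1)$ is computed in $\widehat{P(K_r)}$; this is precisely the mechanism of Theorem~\ref{ptorsioninMH}, and it is stable under the Pachner moves by Corollary~\ref{PachnerInvariance} since the homeomorphism type of $L$ is unchanged. It then remains to check that $\ell_r=2n+3$ for all $r$: because $L$ is a closed manifold of positive dimension, no triangulation of $L$ has a unique top-dimensional simplex, so $P(K_r)$ has no maximum element, $\hat 1$ must be adjoined, and a shortest chain $\hat 0 < (\text{a vertex}) < \cdots < (\text{a top face}) < \hat 1$ in $\widehat{P(K_r)}$ has length $\dim L + 2 = 2n+3$ (the same count gives $\ell=4$ in the $\mathbb{R}\mathbb{P}^2$ case of Example~\ref{exYKRP2}). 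Hence $\textnormal{MH}_{3,2n+3}(G_r)$ contains a subgroup isomorphic to $\Z_{p^m}$ for every $r\ge 1$.

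Finally, the graphs $G_r$ fall into infinitely many isomorphism classes: the chosen Pachner moves strictly increase the number of faces of $K_r$, so $|V(G_r)|$, which equals the number of elements of $\widehat{P(K_r)}$, is strictly increasing in $r$, and the $G_r$ are therefore pairwise non-isomorphic. The only point requiring genuine care — and the place a careless argument could break — is the invariance of the second grading: one must verify that $\ell=d(\hat 0,\hat 1)$ does not drift under the Pachner moves, which as above reduces to the elementary fact that a closed manifold of dimension $\ge 1$ never admits a triangulation with a single top-dimensional simplex, together with the standard fact that a subdivision-type Pachner move is always available and always adds simplices \cite{Pachner1991Moves}. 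Everything else is a direct transcription of the proofs of Theorems~\ref{ptorsioninMH} and~\ref{InfinitelyMany2Torsion}.
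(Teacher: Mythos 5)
Your proposal is correct and follows essentially the same route as the paper: triangulate the generalized lens space $L(p^m,q_1,\dots,q_n)$, generate infinitely many pairwise non-isomorphic triangulations by repeated subdivision-type Pachner moves (distinguished by vertex count), and invoke Theorem \ref{YKembeddinglemma} together with Corollary \ref{PachnerInvariance}. Your extra check that $d(\hat 0,\hat 1)=2n+3$ is stable under the Pachner moves is a welcome piece of care that the paper leaves implicit, but it does not change the argument.
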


\begin{proof}
Let $p$ be a prime and $q_1,q_2,\ldots,q_n$ be integers coprime to $p^m$. Let $K$ be a triangulation of the lens space $L(p^m,q_1,q_2,\ldots,q_n)$. Set $K_1=K$ and for $r \ge 1$ define $K_{r+1}=P_{\varphi_r}K_r$ where $\varphi_r : A_r \rightarrow \partial \Delta^{2n+2}$ is a simplicial isomorphism and $A_r$ is a $(2n+1)$-simplex of $K_r$. By a similar argument as in the proof of Theorem \ref{InfinitelyMany2Torsion}, the graphs $G_r=G(K_r)$ are mutually distinct, and Theorem \ref{YKembeddinglemma} and Corollary \ref{PachnerInvariance} imply that $\textnormal{MH}_{3,2n+3}(G_r)$ contains a subgroup isomorphic to $\Z_{p^m}$ for each $r \ge 1$.
\end{proof}

Another way to produce families of graphs with a given prime order torsion was suggested to me by a member of my committee, Dr. Tye Lidman, to whom I am very grateful for the suggestion. It goes as follows. Let $p$ be a prime and for each $n \ge 1$, let $K_n$ be a triangulation of the lens space $L(p^n,1)$. By Theorem \ref{YKembeddinglemma}, $\textnormal{MH}(G(K_n))$ has a subgroup isomorphic to $\Z_{p^n}$. Dr. Lidman also pointed out that we can take any finitely generated abelian group and realize it as a subgroup of the singular homology of a topological space. Furthermore, we can always choose such a space that is \textit{triangulable}. As a consequence of the embedding of Theorem \ref{YKembeddinglemma}, we thus have the following.

\begin{thm}\label{FGAbGrp}
Let $M$ be any finitely generated finite abelian group. Then, there exists a graph $G$ whose magnitude homology $\textnormal{MH}(G)$ contains a subgroup isomorphic to $M$.
\end{thm}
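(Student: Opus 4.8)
The plan is to invoke Theorem~\ref{YKembeddinglemma} applied to a well-chosen triangulable space. First I would reduce the statement to producing, for a given finitely generated abelian group $M$, a triangulable topological space $X$ whose reduced singular homology $\widetilde{H}_j(X)$ is isomorphic to $M$ in some degree $j \ge 0$. Writing $M \cong \Z^b \oplus \Z_{d_1} \oplus \cdots \oplus \Z_{d_s}$ by the structure theorem for finitely generated abelian groups (though note the statement says ``finitely generated finite'', so the free part is absent and $M \cong \bigoplus_i \Z_{d_i}$), the natural candidate is a wedge of Moore spaces: take $X = \bigvee_i M(\Z_{d_i}, n)$ for a fixed $n \ge 2$, so that $\widetilde{H}_n(X) \cong \bigoplus_i \Z_{d_i} \cong M$ by the Mayer--Vietoris/wedge axiom, and all other reduced homology vanishes. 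A Moore space $M(\Z_d, n)$ can be built as a finite CW complex — $S^n$ with an $(n+1)$-cell attached by a degree-$d$ map — and finite CW complexes are homotopy equivalent to finite simplicial complexes, hence triangulable; a finite wedge of triangulable spaces is triangulable.

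Next I would feed a triangulation $K$ of $X$ into the Kaneta--Yoshinaga construction. By Theorem~\ref{YKembeddinglemma}, setting $k = n+2$ and $\ell = d(\hat 0, \hat 1)$ in $\widehat{P(K)}$, we obtain an embedding $M \cong \widetilde{H}_{n}(X) = \widetilde{H}_{k-2}(X) \hookrightarrow \textnormal{MH}_{k,\ell}(G(K))$. Taking $G = G(K)$ finishes the proof, since $\textnormal{MH}(G)$ is the direct sum over all bigradings of its components and thus contains the subgroup $\textnormal{MH}_{k,\ell}(G(K)) \supseteq M$.

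One subtlety to address: Theorem~\ref{YKembeddinglemma} as stated is for triangulations of a \emph{manifold} $M'$, whereas a wedge of Moore spaces is not a manifold. There are two ways around this. The cleaner route is to realize $M$ instead as a subgroup (indeed as the whole group) of $\widetilde{H}_1$ of a closed $3$-manifold: every finitely generated abelian group arising as $H_1$ of a closed orientable $3$-manifold includes all finite abelian groups (e.g. connected sums of lens spaces realize all finite cyclic $H_1$, and connected sum adds first homology, so $\#_i L(d_i, 1)$ has $H_1 \cong \bigoplus_i \Z_{d_i} \cong M$), and closed $3$-manifolds are triangulable. Then Theorem~\ref{YKembeddinglemma} with $k=3$ gives $M \cong \widetilde{H}_1 \hookrightarrow \textnormal{MH}_{3,\ell}(G(K))$ directly. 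The alternative is to check that the proof of Corollary~5.12(a) in \cite{YoshinagaKaneta2018} only uses that $|K|$ is a triangulable space, not genuinely the manifold hypothesis — but relying on the published manifold version and the connected-sum-of-lens-spaces model is safer and self-contained. The main obstacle, then, is not any deep difficulty but simply the bookkeeping of picking the right triangulable model: I would go with $G = G(K)$ for $K$ a triangulation of $\#_{i=1}^s L(d_i, 1)$, which keeps everything inside the hypotheses already justified in the paper.
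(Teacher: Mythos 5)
Your final argument --- triangulating the connected sum $\#_{i} L(d_i,1)$ of lens spaces, whose first homology is $\bigoplus_i \Z_{d_i} \cong M$, and applying Theorem~\ref{YKembeddinglemma} with $k=3$ --- is exactly the route the paper takes, and your decision to discard the wedge-of-Moore-spaces idea because the embedding theorem is stated for manifolds is the right call. The only difference is cosmetic: the paper uses the primary decomposition $\bigoplus \Z_{p_i^{r_i}}$ rather than invariant factors, and separately (and somewhat redundantly, given the ``finite'' hypothesis) argues that a free summand $\Z^r$ can also be accommodated via the main diagonal.
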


\begin{proof}
By the fundamental theorem of finitely generated abelian groups, $M$ is of the form
\[
M \cong \Z^r \oplus \Z_{p_1^{r_1}} \oplus \Z_{p_2^{r_2}} \oplus \cdots \oplus \Z_{p_m^{r_m}}
\]
for some integers $r, r_i$ and $m$, and primes $p_i$ (not necessarily distinct). The lens spaces $L(p^{r_i},1)$ have first homology groups $H_1(L(p^{r_i},1)) \cong \Z_{p^{r_i}}$. For manifolds $X$ and $Y$ of the same dimension, the homology groups satisfy $H_k(X \# Y) \cong H_k(X) \oplus H_l(Y)$ for each $k \ge 0$, where $\#$ denotes the connected sum. This identity extends by induction to $m$-fold connected sums. Consequently,  $$H_1(L(p^{r_1},1) \ \# L(p^{r_2},1) \ \# \cdots \# \ L(p^{r_m},1)) \cong \Z_{p_1^{r_1}} \oplus \Z_{p_2^{r_2}} \oplus \cdots \oplus \Z_{p_m^{r_m}}.$$ Let $K$ be a triangulation of the 3-manifold $L(p^{r_1},1) \ \# L(p^{r_2},1) \ \# \cdots \# \ L(p^{r_m},1)$. Then, by Theorem \ref{YKembeddinglemma}, the graph $G(K)$ obtained via the Kaneta-Yoshinaga construction has magnitude homology with a subgroup isomorphic to $\Z_{p_1^{r_1}} \oplus \Z_{p_2^{r_2}} \oplus \cdots \oplus \Z_{p_m^{r_m}}$.  For any graph $G$ with at least one edge, $\rank(\textnormal{MH}_{k,k}(G)) \ge 1$ for every $k \ge 0$. Consequently, $G(K)$ also has a subgroup isomorphic to $\Z^r$ for every integer $r$. Consequently, the magnitude homology of $G(K)$ has a subgroup isomorphic to $M$.
\end{proof}



\section{Magnitude homology of outerplanar graphs and graphs with no $3$- or $4$-cycles}\label{ComputationsAndMainDiagonal}

In this section we use Gu's computations of the magnitude homology groups of cycle graphs \cite{gu2018graph} to compute the magnitude homology of a family of outerplanar graphs, compute the main diagonal of all graphs with no cycles of length $3$ or $4$, and put forth several conjectures regarding the main diagonal for other families of graphs based on calculations performed using Python.

\subsection{Cycle graphs and graphs without $3$- or $4$-cycles}

\begin{prop}\label{C3homology}
The magnitude homology of the cycle graph $C_3$ is torsion-free, supported on the main diagonal, and satisfies $\textnormal{MH}_{k,k}(C_3) \cong \Z^{3 \cdot 2^{k}}$.

\end{prop}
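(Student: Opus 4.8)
The plan is to exploit the fact that $C_3$ is the complete graph $K_3$, in which any two distinct vertices lie at distance $1$. First I would observe that, as a consequence, every $k$-path $\mathbf{x}=(x_0,\dots,x_k)$ in $C_3$ has length exactly $\ell(\mathbf{x})=\sum_{i=0}^{k-1}d(x_i,x_{i+1})=k$, since each summand equals $1$. Hence $\MC_{k,\ell}(C_3)=0$ whenever $\ell\neq k$, and the magnitude homology is automatically supported on the main diagonal; it remains only to identify $\MH_{k,k}(C_3)=H_k(\MC_{*,k}(C_3))$.

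Next I would count the generators of $\MC_{k,k}(C_3)$: these are the tuples $(x_0,\dots,x_k)$ with $x_{i-1}\neq x_i$ for $1\le i\le k$, of which there are $3$ choices for $x_0$ and $2$ for each subsequent entry, giving $3\cdot 2^{k}$ in total. Thus $\MC_{k,k}(C_3)\cong\Z^{3\cdot 2^{k}}$ as an abelian group.

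The crucial step is to show that the differential vanishes identically. For $1\le i\le k-1$, deleting $x_i$ from $\mathbf{x}$ replaces the contribution $d(x_{i-1},x_i)+d(x_i,x_{i+1})=2$ by $d(x_{i-1},x_{i+1})$, which is $0$ if $x_{i-1}=x_{i+1}$ and $1$ otherwise; in either case it is strictly less than $2$. Therefore the length of the shortened tuple is strictly smaller than $\ell(\mathbf{x})=k$, so the Kronecker delta in the definition of $\partial_i$ is zero and $\partial_i=0$; summing over $i$ gives $\partial=0$ on all of $\MC_{*,k}(C_3)$. Consequently $\MH_{k,k}(C_3)=\MC_{k,k}(C_3)\cong\Z^{3\cdot 2^{k}}$, which is free and in particular torsion-free, while $\MH_{k,\ell}(C_3)=0$ for $\ell\neq k$.

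There is essentially no obstacle: the entire argument rests on the diameter of $K_3$ being $1$, which forces both the diagonal support and the vanishing of $\partial$. The only point requiring a moment's care is that one must also handle the case $x_{i-1}=x_{i+1}$ when checking that removing an interior vertex cannot preserve length, but this is subsumed in the inequality $d(x_{i-1},x_{i+1})<2$ above. (The same reasoning computes the magnitude homology of $K_n$ along the main diagonal for every $n$; this proposition is merely the case $n=3$, recorded separately because $C_3$ will serve as a building block in Section \ref{ComputationsAndMainDiagonal}.)
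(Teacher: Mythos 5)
Your proof is correct and follows essentially the same route as the paper's: the differential vanishes on every generator because $d(v_{j-1},v_{j+1})\le 1 < 2 = d(v_{j-1},v_j)+d(v_j,v_{j+1})$, the generators are counted as $3\cdot 2^k$, and the off-diagonal chain groups vanish since every $k$-path in $C_3$ has length $k$. The only addition is your explicit handling of the case $x_{i-1}=x_{i+1}$ and the remark about $K_n$, neither of which changes the argument.
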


\begin{proof}
	Consider a generator $(v_0, v_1, \ldots, v_k)$ of $\MC_{k,k}(C_3)$. Since $d(v_{j-1},v_{j+1}) \le 1$ while $d(v_{j-1},v_{j}) + d(v_{j},v_{j+1}) = 2$, it follows that $\partial(v_0, v_1, \ldots, v_i) = 0$ for every generator.
	Hence, $\textnormal{MH}_{k,k}(C_3) \cong \MC_{k,k}(C_3)$. There are three choices of initial vertex $v_0$ and two choices for each of the subsequent vertices $v_1,v_2,\ldots,v_k$, giving a total of $3 \cdot 2^k$ generators. For the last statement, simply note that for $l \neq k$ we have  $l(v_0,v_1,\ldots,v_k) = k \neq l$, so that $\MC_{k,\ell}(C_3) = 0$ for $l \neq k$.
\end{proof}

Using the K\"{u}nneth sequence, Hepworth and Willerton computed the magnitude homology of the cycle graph $C_4 = K_2 \Box K_2$ as follows.

\begin{prop}[\cite{hepworth2017categorifying}]
The magnitude homology of the cycle graph $C_4$ is torsion-free, supported on the main diagonal and satisfies $\textnormal{MH}_{k,k}(C_4) \cong \Z^{4 + 4k}$.

\end{prop}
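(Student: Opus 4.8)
The plan is to leverage the product decomposition $C_4 = K_2 \Box K_2$ together with the Künneth sequence (\ref{Kunneth}), exactly as indicated. The first step is to pin down the magnitude homology of the single edge $K_2$. Writing $V(K_2) = \{a,b\}$ with $d(a,b)=1$, any $k$-path must alternate between $a$ and $b$ since consecutive vertices are required to be distinct and there are only two of them; such a path has length exactly $k$, so $\MC_{k,\ell}(K_2) = 0$ for $\ell \neq k$ and $\MC_{k,k}(K_2) \cong \Z^2$ (two choices of starting vertex). Deleting an interior vertex $x_i$ from an alternating path forces $x_{i-1} = x_{i+1}$, which drops the length by $2$; hence every $\partial_i$, and therefore $\partial$, vanishes, so $\textnormal{MH}_{k,k}(K_2) \cong \MC_{k,k}(K_2) \cong \Z^2$ while $\textnormal{MH}_{k,\ell}(K_2) = 0$ otherwise. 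This is the same mechanism as in Proposition \ref{C3homology}.

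Next I would feed this into the Künneth sequence. Since $\textnormal{MH}_{*,*}(K_2)$ is a free $\Z$-module, the group $\textnormal{Tor}_1^{\Z}(\textnormal{MH}_{*-1,*}(K_2), \textnormal{MH}_{*,*}(K_2))$ vanishes, so (\ref{Kunneth}) collapses to a bigraded isomorphism
\[
\textnormal{MH}_{*,*}(C_4) \;\cong\; \textnormal{MH}_{*,*}(K_2) \otimes \textnormal{MH}_{*,*}(K_2).
\]
Unwinding the bigrading, the $(k,\ell)$ component of the right-hand side is $\bigoplus \textnormal{MH}_{k_1,\ell_1}(K_2) \otimes \textnormal{MH}_{k_2,\ell_2}(K_2)$ taken over $k_1+k_2 = k$ and $\ell_1+\ell_2=\ell$. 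Each nonzero summand requires $k_1 = \ell_1$ and $k_2 = \ell_2$, forcing $\ell = k$; this immediately yields that $\textnormal{MH}(C_4)$ is supported on the main diagonal and is torsion-free, being a tensor product of free abelian groups with no Tor contribution.

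Finally, on the main diagonal the same computation reads
\[
\textnormal{MH}_{k,k}(C_4) \cong \bigoplus_{k_1=0}^{k} \textnormal{MH}_{k_1,k_1}(K_2) \otimes \textnormal{MH}_{k-k_1,k-k_1}(K_2) \cong \bigoplus_{k_1=0}^{k} \Z^2 \otimes \Z^2 = \Z^{4(k+1)} = \Z^{4+4k},
\]
as claimed. There is no serious obstacle here; the only point requiring care is the bigraded bookkeeping in the Künneth sequence, and one can sanity-check the final rank against the known magnitude $\#C_4 = (\#K_2)^2 = 4(1+q)^{-2}$ by taking graded Euler characteristics.
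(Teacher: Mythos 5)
Your argument is correct and is exactly the route the paper indicates: the paper cites Hepworth--Willerton's computation via the decomposition $C_4 = K_2 \Box K_2$ and the K\"{u}nneth sequence, and your direct computation of $\textnormal{MH}_{*,*}(K_2)$ (which also follows from the tree theorem with $|V|=2$, $|E|=1$) plus the vanishing of the Tor term by freeness gives precisely $\textnormal{MH}_{k,k}(C_4) \cong \Z^{4+4k}$ supported on the main diagonal. No gaps; the Euler-characteristic sanity check against $\#C_4 = 4(1+q)^{-2}$ is a nice touch.
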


\begin{thm}\label{nosquaresortriangles}
Let $G$ be a graph with vertex set $V$ and edge set $E$. If $G$ has no $3$- or $4$-cycles, then the first diagonal in the magnitude homology of $G$ is torsion-free and satisfies

\[
\textnormal{MH}_{k,k}(G) \cong
\left\{ \begin{array}{cc}
\Z^{|V|} & k=0,\\
\Z^{2|E|} & k>0.
\end{array}
\right.
\]

\end{thm}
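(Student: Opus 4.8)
The plan is to identify $\MH_{k,k}(G)$ with the kernel of a single differential and then use the two hypotheses to determine that kernel exactly. The case $k=0$ is immediate: every $1$-path has length at least $1$, so $\MC_{1,0}(G)=0$ and $\MH_{0,0}(G)=\MC_{0,0}(G)\cong\Z^{|V|}$. For $k\ge 1$, every $(k+1)$-path has length at least $k+1$, so $\MC_{k+1,k}(G)=0$ and hence
\[
\MH_{k,k}(G)=\ker\big(\partial\colon\MC_{k,k}(G)\to\MC_{k-1,k}(G)\big).
\]
I would then record two structural facts. A $k$-path of length $k$ must have all $k$ of its steps of length $1$, so $\MC_{k,k}(G)$ is free on the set of \emph{edge-walks} $w=(x_0,\dots,x_k)$, meaning $x_j$ adjacent to $x_{j+1}$ for every $j$. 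A $(k-1)$-path of length $k$ has exactly one step of length $2$ and all others of length $1$, so $\MC_{k-1,k}(G)=\bigoplus_{j=0}^{k-2}C^{(j)}$, where $C^{(j)}$ is the free abelian group on those $(k-1)$-paths whose unique length-$2$ step runs from the $j$-th to the $(j+1)$-st vertex.

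Next I would track the face maps, and here the no-$3$-cycles hypothesis enters. For an edge-walk $w$ and $1\le i\le k-1$: if $x_{i-1}=x_{i+1}$ then $\partial_i w=0$; if $x_{i-1}\ne x_{i+1}$ then $x_{i-1}$ and $x_{i+1}$ are distinct and non-adjacent (else $x_{i-1},x_i,x_{i+1}$ is a triangle) with common neighbour $x_i$, so $d(x_{i-1},x_{i+1})=2$ and $\partial_i w=(x_0,\dots,x_{i-1},x_{i+1},\dots,x_k)$ is a generator lying in $C^{(i-1)}$. Since $i\mapsto i-1$ is a bijection $\{1,\dots,k-1\}\to\{0,\dots,k-2\}$, the differential $\partial=\sum_{i=1}^{k-1}(-1)^i\partial_i$ is literally the direct sum of the maps $(-1)^i\partial_i\colon\MC_{k,k}(G)\to C^{(i-1)}$. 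Therefore a chain $\sum_w a_w w$ is a cycle if and only if $\sum_w a_w\,\partial_i w=0$ for each $i$ \emph{separately} — no cancellation between distinct face maps is possible. This decoupling is the crux and the step I expect to be the main obstacle to spot; without it one would have to control complicated interactions among the $\partial_i$, and it is exactly where the girth hypothesis does its work.

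Finally I would compute the kernel using the no-$4$-cycles hypothesis. Fix $i$. If $w\ne w'$ are edge-walks with $\partial_i w=\partial_i w'\ne 0$, then $w$ and $w'$ agree in every coordinate except the $i$-th, and their $i$-th vertices are two distinct common neighbours of the non-adjacent pair $x_{i-1},x_{i+1}$, producing a $4$-cycle — impossible. Hence $\partial_i$ is injective on the set of edge-walks with $x_{i-1}\ne x_{i+1}$, so $\sum_w a_w\,\partial_i w=0$ forces $a_w=0$ for every such $w$. Imposing this for all $i=1,\dots,k-1$, any cycle is supported on the edge-walks satisfying $x_{i-1}=x_{i+1}$ for all $i$, i.e.\ on the alternating walks $(a,b,a,b,\dots)$ with $a$ adjacent to $b$; conversely every such walk is a cycle by the previous step. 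There are exactly $2|E|$ of these, one for each ordered adjacent pair $(a,b)$, so $\MH_{k,k}(G)\cong\Z^{2|E|}$, which is free and in particular torsion-free. Together with the $k=0$ case this yields the stated formula.
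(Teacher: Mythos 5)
Your proof is correct and follows essentially the same route as the paper: identify $\textnormal{MH}_{k,k}(G)$ with $\ker\big(\partial\colon \MC_{k,k}\to\MC_{k-1,k}\big)$, use the absence of $3$-cycles to show every non-alternating edge-walk has a nonzero face, and use the absence of $4$-cycles to rule out cancellation among distinct walks, leaving exactly the $2|E|$ alternating walks. Your explicit direct-sum decomposition $\MC_{k-1,k}(G)=\bigoplus_j C^{(j)}$ cleanly justifies the no-cancellation step that the paper's proof treats more informally, but the underlying argument is the same.
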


\begin{proof}
Note that $\MC_{k+1,k}(G) = 0$, so $\textnormal{MH}_{k,k}(G)$ is the kernel of the map $\partial : \MC_{k,k}(G) \longrightarrow \MC_{k-1,k}(G)$. For $k=0$, this kernel is generated by the vertices of $V$. For $k=1$, there are $2|E|$ generating tuples $(x_0,x_1)$ for $\MC_{k,k}(G)$, and each has differential zero. For $k \ge 2$, assume without loss of generality that $k$ is odd, and consider the set
\[
	B = \{ (v,w,v,w,\ldots,w), \ (w,v,w,v,\ldots,v) \ | \ \{v,w\} \in E \}.
\]
Since $|B| = 2|E|$, it suffices to show that $B$ forms a basis for the kernel of $\partial : \MC_{k,k}(G) \longrightarrow \MC_{k-1,k}(G)$. Indeed, $\MC_{k,k}(G)$ is, by definition, generated by tuples $(x_0,x_1,\ldots,x_k)$ with $d(x_i,x_{i+1})=1$ for each $0 \le i \le k-1$. For such a tuple to lie in the kernel of $\partial$, it must satisfy $\partial(x_0,x_1,\ldots,x_k)=0$. This happens if and only iff $\partial_i(x_0,x_1,\ldots,x_k)=0$ for each $1 \le i \le k-1$. If there is an index $i$ with $x_i \neq x_{i+2}$, then $\partial_i(x_0,x_1,\ldots,x_k) = 0$ and $x_i, x_{i+1}$ and $x_{i+2}$ form a 3-cycle in $G$. But $G$ has no 3-cycles, so no such generators of $\MC_{k,k}(G)$ are in the kernel of $\partial$. It remains to show that no linear combination of generating tuples lies in the kernel of $\partial$. Let $c$ be a linear combination of generating tuples for $\MC_{k,k}(G)$, and assume without loss of generality that no tuple in $c$ belongs to the set $B$. Let $(x_0,x_1,\ldots,x_k) \in c$. For some $1 \le i \le k-1$, we have $d(x_{i-1},x_{i+1})=2$. Since $\partial(c)=0$, the tuple $(x_0,\ldots,x_{i-1},x_{i+1},\ldots,x_k) \in \partial(c)$ must cancel with $\partial_i(x_0,x_1,\ldots,x_{i-1},y,x_{i+1},\ldots,x_k) \in c-(x_0,x_1,\ldots,x_k)$. Then, $x_{i-1},x_i,x_{i+1}$ and $y$ form a 4-cycle in $G$.  But $G$ has no $4$-cycles.

\end{proof}

\subsection{Outerplanar Graphs}

This sole dependence of the main diagonal on the number of vertices and edges extends to certain types of outerplanar graphs.

\begin{defn}
A graph $G$ is \textit{outer planar} if it has a plane drawing with no crossings all of whose vertices lies on an outer face of $G$. Equivalently, a graph is outer planar if it can be constructed from a finite collection $\{ H_1, H_2, \ldots, H_t \}$ of copies of $K_2$ (the complete graph on two vertices i.e. a single edge) and cycle graphs $C_n$, by gluing along single vertices or edges as follows. $G = H_1 \star H_2 \star \cdots \star H_t$ where for each $1 \le s \le t$, $H_1 \star  H_2 \star \cdots \star H_{s-1} \star H_s$ is formed from $H_1 \star  H_2 \star \cdots \star H_{s-1}$ by identifying an edge/vertex of $H_s$ with an outer-face edge/vertex of $H_1 \star  H_2 \star \cdots \star H_{s-1}$. We refer to the subgraphs $H_s$ as the components of $G$.
\end{defn}

\begin{thm}\label{evenouterplanardiagonal}
Let $G$ be an outer planar graph with vertex set $V$ and edge set $E$ whose components are either $K_2$ or even cycles $C_n$ with $n \neq 4$. Then, the main diagonal of the magnitude homology of $G$ is torsion-free with
\begin{align*}
\rank(\textnormal{MH}_{k,k}(G)) =
\left\{
\begin{array}{ll}
|V| & k=0,\\
2|E| & k > 0.
\end{array}
\right.
\end{align*}
\end{thm}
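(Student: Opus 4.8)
The plan is to induct on the number of components $t$ in the outerplanar decomposition $G = H_1 \star H_2 \star \cdots \star H_t$, using the Mayer--Vietoris sequence (\ref{MVSequence}) at each gluing step. The base case $t=1$ is already handled: if $H_1 = K_2$ this is the tree computation, and if $H_1 = C_n$ with $n$ even, $n \neq 4$, then $n \ge 6$ has no $3$- or $4$-cycles, so Theorem \ref{nosquaresortriangles} applies and gives $\rank(\textnormal{MH}_{k,k}) = 2|E|$ for $k>0$ and $|V|$ for $k=0$. (The case $C_4$ is genuinely excluded here: $\textnormal{MH}_{k,k}(C_4) \cong \Z^{4+4k}$ grows in $k$, so $C_4$ components would break the conclusion.)

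For the inductive step, write $G = G' \star H_t$ where $G' = H_1 \star \cdots \star H_{t-1}$, and let $J = G' \cap H_t$ be the shared vertex or edge along which $H_t$ is glued. The key claim is that $(G; G', H_t)$ is a projecting decomposition. One checks: (1) $G = G' \cup H_t$ by construction; (2) $J$ (a single vertex, or a single edge $K_2$) is convex in $G$ --- a single edge is convex in any graph, and a single vertex trivially so; (3) $G'$ projects onto $J$. Point (3) is the step I expect to require the most care: I would argue that because $H_t$ is attached only along $J$, any shortest path in $G$ from a vertex $x \in G'$ to a vertex of $J$ stays inside $G'$, and moreover every geodesic from $x$ into $H_t$ must pass through $J$; this forces the existence of a well-defined closest point $\pi(x) \in J$ satisfying $d(x,h) = d(x,\pi(x)) + d(\pi(x),h)$ for all $h \in J$. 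When $J$ is a single vertex this is automatic; when $J = K_2$ is an edge $\{u,v\}$, one must verify that for each $x \in G'$ the two distances $d(x,u), d(x,v)$ differ by exactly $1$ (they cannot be equal, since $G'$ has no odd cycles through that edge when the relevant component is an even cycle or $K_2$, and an odd cycle would be needed to make them equal) --- this is where the hypothesis that all components are $K_2$ or \emph{even} cycles is used. Granting the projecting decomposition, the split short exact sequence (\ref{MVSequence}) gives
\begin{equation*}
\rank(\textnormal{MH}_{k,k}(G)) = \rank(\textnormal{MH}_{k,k}(G')) + \rank(\textnormal{MH}_{k,k}(H_t)) - \rank(\textnormal{MH}_{k,k}(J)).
\end{equation*}

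Now feed in the known values. By induction $\rank(\textnormal{MH}_{k,k}(G')) = 2|E(G')|$ for $k>0$ and $|V(G')|$ for $k=0$; by the base case $\rank(\textnormal{MH}_{k,k}(H_t)) = 2|E(H_t)|$ for $k>0$ and $|V(H_t)|$ for $k=0$; and $J$ is either a point (with $\textnormal{MH}_{k,k}$ of rank $1$ for $k=0$ and $0$ for $k>0$, since a single vertex has $\MC_{k,k} = 0$ for $k \ge 1$) or an edge $K_2$ (with $\textnormal{MH}_{k,k}$ of rank $2$ for $k>0$ and $2$ for $k=0$, the tree formula with $|V|=2, |E|=1$). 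In the vertex-gluing case $|V(G)| = |V(G')| + |V(H_t)| - 1$ and $|E(G)| = |E(G')| + |E(H_t)|$, so for $k>0$ we get $2|E(G')| + 2|E(H_t)| - 0 = 2|E(G)|$, and for $k=0$ we get $|V(G')| + |V(H_t)| - 1 = |V(G)|$. In the edge-gluing case $|V(G)| = |V(G')| + |V(H_t)| - 2$ and $|E(G)| = |E(G')| + |E(H_t)| - 1$, so for $k>0$: $2|E(G')| + 2|E(H_t)| - 2 = 2|E(G)|$, and for $k=0$: $|V(G')| + |V(H_t)| - 2 = |V(G)|$. Torsion-freeness is inherited because the sequence splits and all three terms $\textnormal{MH}_{k,k}(G')$, $\textnormal{MH}_{k,k}(H_t)$, $\textnormal{MH}_{k,k}(J)$ are torsion-free (by induction and the base cases), so their direct sum, hence the quotient appearing as $\textnormal{MH}_{k,k}(G)$, is torsion-free and in fact free of the stated rank. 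The main obstacle, as noted, is the verification that each gluing yields a genuine projecting decomposition --- in particular checking convexity of the overlap inside the \emph{whole} graph $G$ and the projection property, which is exactly where the "even cycle, $n \neq 4$" hypothesis enters and where I would spend the bulk of the argument.
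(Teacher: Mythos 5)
Your proof follows essentially the same route as the paper's: induction on the number of components, applying the split Mayer--Vietoris sequence at each gluing step, with Theorem \ref{nosquaresortriangles} handling the even-cycle pieces, Theorem \ref{treehomology} handling the tree pieces, and the intersection being a vertex or an edge. The only real difference is that you spell out the verification that each gluing yields a projecting decomposition (including the bipartiteness argument showing $d(x,u)$ and $d(x,v)$ differ by one for an edge-gluing), a hypothesis the paper's proof simply asserts; that is a correct and worthwhile addition rather than a divergence.
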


The proof of this theorem  will involve an induction argument, the Mayer-Vietoris sequence, and the following theorem of Hepworth and Willerton.

\begin{thm}[\cite{hepworth2017categorifying}]\label{treehomology}
Let $T$ be a tree with vertex set $V$ and edge set $E$. $\textnormal{MH}_{*,*}(T)$ is torsion-free group with

\[
\rank (\textnormal{MH}_{k,\ell}(T)) =
\left\{ \begin{array}{ll}
|V(T)| & k=\ell=0,\\
2|E(T)| & k=\ell \ge 1,\\
0 & k \neq \ell.
\end{array}
\right.
\]
\end{thm}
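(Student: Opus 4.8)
The plan is to induct on the number of edges of $T$, exploiting the leaf structure of trees together with the splitting Mayer--Vietoris sequence (\ref{MVSequence}). Two base computations serve as inputs. First, for a single vertex (a point) the magnitude chain complex has exactly one generator, in bidegree $(0,0)$, so $\MH_{0,0}(\text{pt})\cong\Z$ and every other group vanishes. Second, for $K_2$ with vertices $u,v$ at distance $1$, the only generators of $\MC_{k,\ell}$ are the two alternating tuples $(u,v,u,\dots)$ and $(v,u,v,\dots)$; these live in bidegree $(k,k)$ and have vanishing differential, so $\MH_{0,0}(K_2)\cong\Z^2$, $\MH_{k,k}(K_2)\cong\Z^2$ for $k\ge 1$, and all off-diagonal groups vanish. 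Both computations agree with the claimed formula and anchor the induction.

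For the inductive step, suppose $T$ has $|E|\ge 1$ edges. A finite tree with an edge has a leaf, so I would choose a leaf $v$ with unique neighbor $u$ and let $T'=T-v$ be the tree obtained by deleting $v$ and its incident edge. Setting $H_1=K_2$ to be the leaf edge $\{u,v\}$ and $H_2=T'$, we get $H_1\cap H_2=\{u\}$. The key geometric check is that $(T;H_1,H_2)$ is a projecting decomposition: the condition $T=H_1\cup H_2$ is clear; the single vertex $\{u\}$ is convex both in $T$ and in $H_1$; and $H_1=K_2$ projects onto $\{u\}$ trivially, since $u$ is the only possible image of the projection and $d(x,u)=d(x,u)+d(u,u)$ for $x\in\{u,v\}$. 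This is the one point at which the tree hypothesis is genuinely used: because $v$ is a leaf, the new component is glued along exactly one vertex, which is automatically convex and automatically projected onto.

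With the decomposition in hand, the splitting of (\ref{MVSequence}) yields
\[
\MH_{*,*}(K_2)\oplus\MH_{*,*}(T')\;\cong\;\MH_{*,*}(\{u\})\oplus\MH_{*,*}(T).
\]
I would then read off each bidegree using the inductive hypothesis for $T'$ (which has $|V|-1$ vertices and $|E|-1$ edges) together with the two base computations. In bidegree $(0,0)$ the relation $\MH_{0,0}(T)\oplus\Z\cong\Z^2\oplus\Z^{|V|-1}$ gives $\Z^{|V|}$; in bidegree $(k,k)$ with $k\ge 1$ the relation $\MH_{k,k}(T)\cong\Z^2\oplus\Z^{2(|E|-1)}$ gives $\Z^{2|E|}$; and off the diagonal all summands on the known side vanish, forcing $\MH_{k,\ell}(T)=0$ for $k\ne\ell$. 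Torsion-freeness then comes for free: since the sequence splits, $\MH_{*,*}(T)$ is a direct summand of $\MH_{*,*}(K_2)\oplus\MH_{*,*}(T')$, which is free by induction, and a direct summand of a free abelian group is free.

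I expect the only real subtlety to be the careful verification of the projecting-decomposition conditions, in particular confirming that the intersection is exactly the single vertex $\{u\}$ and that the convexity and projection clauses are applied in the correct direction $H_1\to H_1\cap H_2$; once that is settled, the homological bookkeeping is routine. As a consistency check on the diagonal part, note that a tree contains no $3$- or $4$-cycles, so Theorem \ref{nosquaresortriangles} already forces $\MH_{k,k}(T)\cong\Z^{|V|}$ for $k=0$ and $\Z^{2|E|}$ for $k>0$, matching the induction; the content of the present theorem beyond Theorem \ref{nosquaresortriangles} is precisely the off-diagonal vanishing and the global torsion-freeness, both of which the Mayer--Vietoris induction delivers uniformly.
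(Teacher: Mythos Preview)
Your argument is correct. The induction on the number of edges via leaf removal, together with the splitting Mayer--Vietoris sequence for the projecting decomposition $(T;K_2,T')$ with $H_1\cap H_2=\{u\}$, goes through exactly as you describe; the base computations for a point and for $K_2$ are accurate, and the bookkeeping in each bidegree is clean. The torsion-freeness conclusion via ``direct summand of free is free'' is also sound.

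There is nothing to compare against in this paper: Theorem~\ref{treehomology} is quoted from \cite{hepworth2017categorifying} and is not reproved here---it is invoked as a black box in the proof of Theorem~\ref{evenouterplanardiagonal}. Your write-up is in fact the standard argument from the original source: Hepworth and Willerton prove the tree case precisely by peeling off a leaf edge and applying their Mayer--Vietoris theorem, so you have reconstructed their proof rather than found an alternative. One small presentational point: your ``two base computations'' are playing different roles---the single vertex is the genuine base of the induction, while the $K_2$ computation is an auxiliary input needed at every inductive step (since $H_1=K_2$ each time). It might be worth saying this explicitly, but it does not affect correctness.
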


\begin{proof}[Proof of Theorem \ref{evenouterplanardiagonal}]
If $G$ is a tree, then the result follows from Theorem \ref{treehomology}. Otherwise, suppose $G = H_1 \star  H_2 \star \cdots \star H_t$ and let $X = H_1 \star  H_2 \star \cdots \star H_{t-1}$ and $Y = H_t$, where $H_t = C_n$ for some even integer $n\ge 6$. Then, $(G;X,Y)$ is a projecting decomposition. By the Mayer-Vietoris theorem, there is an isomorphism $\textnormal{MH}_{*,*}(G) \oplus \textnormal{MH}_{*,*}(X \cap Y) \cong \textnormal{MH}_{*,*}(X) \oplus \textnormal{MH}_{*,*}(Y)$. In particular,
\begin{equation}\label{MVrankequation}
\rank(\textnormal{MH}_{i,i}(G)) = \rank(\textnormal{MH}_{i,i}(X)) + \rank(\textnormal{MH}_{i,i}(Y)) - \rank(\textnormal{MH}_{i,i}(X \cap Y)).
\end{equation}

Since $X \cap Y$ is a tree, its magnitude homology is torsion-free with
\begin{align*}
\rank(\textnormal{MH}_{k,k}(X \cap Y)) =
\left\{
\begin{array}{ll}
|V(X \cap Y)| & k=0,\\
2|E(X \cap Y)| & k > 0.
\end{array}
\right.
\end{align*}

By Theorem \ref{nosquaresortriangles}, $\textnormal{MH}_{k,k}(Y)$ is torsion-free with
\begin{align*}
\rank(\textnormal{MH}_{k,k}(Y)) =
\left\{
\begin{array}{ll}
|V(Y)| & k=0,\\
2|E(Y)| & k>0.
\end{array}
\right.
\end{align*}

And by induction, the magnitude homology of $X$ is torsion-free with
\begin{align*}
\rank(\textnormal{MH}_{k,k}(X)) =
\left\{
\begin{array}{ll}
|V(X)| & k=0,\\
2|E(X)| & k>0.
\end{array}
\right.
\end{align*}

Equation (\ref{MVrankequation}) then gives
\begin{align*}
\rank((\textnormal{MH}_{k,k}(G)) &=
\left\{
\begin{array}{ll}
|V(X)| + |V(Y)| - |X \cap Y| & k=0,\\
2|E(X)| + 2|E(Y)| - 2|E(X \cap Y)| & k>0,\\
\end{array}
\right.\\
&= \left\{
\begin{array}{cc}
|V(G)| & k=0,\\
2|E(G)| & k>0.
\end{array}
\right.
\end{align*}
\end{proof}

In 2015, Hepworth and Willerton proposed the following recursive formula for the magnitude homology of the even cycle graphs based on experimental data \cite{hepworth2017categorifying}, which was subsequently proved in 2018 by Yuzhou Gu \cite{gu2018graph} using the tools of algebraic Morse theory.

\begin{thm}[\cite{gu2018graph}]
Fix an integer $m \ge 3$. The magnitude homology of the cycle graph $C_{2m}$ is described as follows.
\begin{enumerate}
    \item[(1)] All groups $\textnormal{MH}_{k,\ell}(C_{2m})$ are torsion-free.
    \item[(2)] Define a function $T : \Z \times \Z \rightarrow \Z$ as
    \begin{enumerate}
        \item $T(k,\ell) = 0$ if $k < 0$ or $\ell < 0$;
        \item $T(0,0) = 2m, T(1,1) = 4m$;
        \item $T(k,\ell) = \max\{T(k-1,\ell-1), T(k-2,\ell-m)\}$ for $(k,\ell) \neq (0,0), (1,1)$. 
    \end{enumerate}
    Then, $\rank(\textnormal{MH}_{k,\ell}(C_{2m})) = T(k,\ell)$.
\end{enumerate}
\end{thm}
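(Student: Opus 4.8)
\bigskip
\noindent\textbf{Proof proposal.} The plan is to compute each chain complex $(\MC_{*,\ell}(C_{2m}),\partial)$ directly, using algebraic (discrete) Morse theory to replace it with a small ``Morse complex'' having the same homology. First I would record the combinatorics explicitly. Label $V(C_{2m})$ by $\Z_{2m}$, so that $d(i,j)=\min\{|i-j|,\,2m-|i-j|\}\in\{0,1,\dots,m\}$; two non-antipodal vertices are joined by a unique geodesic, while antipodal vertices (those with $d=m$) are joined by exactly two. A generator of $\MC_{k,\ell}(C_{2m})$ is a tuple $\mathbf{x}=(x_0,\dots,x_k)$ with $x_i\ne x_{i+1}$ and $\sum_i d(x_i,x_{i+1})=\ell$, and $\partial_i\mathbf{x}$ deletes $x_i$ exactly when $x_i$ lies on a geodesic from $x_{i-1}$ to $x_{i+1}$. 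Since $\partial$ has degree $0$ in $\ell$, everything below takes place inside a fixed $\MC_{*,\ell}$.

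Second, I would build an acyclic matching on the generators. Scanning $\mathbf{x}$ from the left, one locates the first position that is an ``event'' and pairs $\mathbf{x}$ either with the tuple obtained by inserting the canonical next vertex of a long step (a step of distance $\ge 2$ admitting a unique geodesic), or with the tuple obtained by deleting an interior vertex that is itself such a canonical inserted vertex; these two moves are mutually inverse and preserve $\ell$. The decisive feature of $C_{2m}$ is that an antipodal step cannot be resolved by this rule, having two geodesics, so antipodal ``out-and-back'' patterns survive the matching. The critical cells should then be exactly the ``rigid'' tuples assembled from back-and-forth blocks $(v,w,v,w,\dots)$ of length-$1$ steps together with antipodal out-and-back blocks. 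I would enumerate these by induction on $(k,\ell)$: the two terms $T(k-1,\ell-1)$ and $T(k-2,\ell-m)$ in the recursion correspond to prepending one more length-$1$ step, respectively one more antipodal block, and the base cases count the obvious small rigid tuples — the $2m$ single vertices giving $T(0,0)=2m$, and the $4m$ ordered adjacent pairs giving $T(1,1)=4m$.

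Third, with the critical complex in hand I would compute its Morse differential and read off $\MH_{k,\ell}(C_{2m})$. The appearance of $\max$ rather than a sum in the recursion is a signal that this differential is not identically zero — it must identify overlapping families of rigid tuples wherever both are present — so the task is to pin it down up to change of basis and conclude that $\MH_{k,\ell}(C_{2m})$ is free of rank $T(k,\ell)$. Torsion-freeness would come out of the same computation, by exhibiting bases of the critical complex in which the Morse differential is a $0/1$ matrix with at most one $1$ in each row and column, so that its image and cokernel are free.

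The main obstacle is twofold, and both halves concern the antipodal steps that make $C_{2m}$ interesting. First, proving the matching acyclic: I would attempt a weight/ranking function on tuples that strictly decreases along matched arrows, but this needs genuine care around antipodal steps, where the ``insert the next geodesic vertex'' rule is ambiguous and a naive matching can fail to be acyclic. Second, computing the Morse differential and showing it splits over $\Z$. As a fallback I would try a more hands-on route: a short exact sequence of chain complexes obtained by splitting each tuple at its first antipodal step, of the shape $0\to \MC_{*-2,\ell-m}(C_{2m})\to \MC_{*,\ell}(C_{2m})\to Q_{*,\ell}\to 0$ with $Q_{*,\ell}$ chain-equivalent to a shift of $\MC_{*-1,\ell-1}(C_{2m})$; the associated long exact sequence would yield the recursion, but — again consistent with the $\max$ — its connecting homomorphisms cannot all vanish, and identifying them would be the crux.
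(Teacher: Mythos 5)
This theorem is quoted from Gu \cite{gu2018graph}; the paper you are working from contains no proof of it, so there is no internal argument to compare against --- but your outline does match the strategy of the cited source, namely algebraic (discrete) Morse theory applied to each fixed-$\ell$ complex $\MC_{*,\ell}(C_{2m})$, with the antipodal vertex pairs (the unique source of non-unique geodesics in $C_{2m}$) as the combinatorial engine. Your reading of the two terms of the recursion --- prepend a unit step versus prepend an antipodal out-and-back block --- and of the base counts $2m$ and $4m$ is also the right one.

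That said, what you have written is a plan rather than a proof, and the two steps you defer are precisely where all of the content lives. First, the matching: ``scan for the first event and insert/delete the canonical next vertex of a long step'' is not yet a definition. You must specify the rule precisely enough that (i) it is an involution on the non-critical generators, (ii) each matched pair has incidence coefficient $\pm 1$ in $\partial$, and (iii) the matching is acyclic; the acyclicity argument interacts in an essential way with exactly the antipodal steps you set aside. Second, and more seriously, the Morse differential. As you yourself observe, the $\max$ in the recursion (rather than a sum) forces the critical complex to carry a nonzero differential, so the theorem cannot follow from a count of critical cells alone; you must compute the induced differential, exhibit bases in which it is a $0/1$ matrix of the permutation-like form you describe, and only then extract both $\rank(\textnormal{MH}_{k,\ell}(C_{2m}))=T(k,\ell)$ and torsion-freeness. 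Without that computation neither conclusion of the theorem is established. Your fallback short exact sequence is also shaky as stated: the set of tuples containing an antipodal step in a prescribed position is not obviously a subcomplex, since deleting an interior vertex $x_i$ can create an antipodal step (whenever $d(x_{i-1},x_i)+d(x_i,x_{i+1})=m$) or destroy one, so the filtration by the position of the first antipodal step need not be preserved by $\partial$. To complete the argument you should either carry out the Morse-theoretic computation in full, following \cite{gu2018graph}, or verify carefully that whatever filtration you substitute is actually compatible with the differential.
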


As pointed out by Hepworth and Willerton \cite{hepworth2017categorifying}, the magnitude homology groups for the even cycle graphs are given by the following equivalent, but more explicit, formula.

\begin{obs}[\cite{hepworth2017categorifying}]
Fix an integer $m \ge 3$. The magnitude homology groups of the cycle graph are torsion-free and arrange themselves into diagonals. Let $T^m_{i,j}$ denote the rank of the magnitude homology group in the $j^{th}$ entry of the $i^{th}$ diagonal. That is, $T^m_{i,j} = \rank(\textnormal{MH}_{2(i-1)+(j-1),m(i-1)+(j-1)}(C_{2m}))$. Then, $T^m_{i,1}=2m$ for each $i \ge 1$, while $T^m_{i,j}=4m$ whenever $i \ge 1$ and $j \ge 2$. See Table \ref{C8Homology}.
\end{obs}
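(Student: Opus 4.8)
The Observation is a bookkeeping reformulation of Gu's theorem, so the plan is to identify the explicit array $(T^m_{i,j})$ with the recursively defined function $T$ appearing there. Torsion-freeness of every group $\textnormal{MH}_{k,\ell}(C_{2m})$ is exactly item (1) of Gu's theorem; the assertion that the nonzero groups arrange into diagonals, and the rank values $2m$ and $4m$, will both follow once we know that $T$ is given by the closed form $T^m_{i,j}$.

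First I would re-coordinatize. Writing $a=i-1\ge 0$ and $b=j-1\ge 0$, the $(i,j)$-slot of the diagonal array sits in bigrading $(k,\ell)=(2a+b,\,ma+b)$; conversely a pair $(k,\ell)$ arises this way precisely when $k\ge 0$, $\ell\ge k$, $(m-2)\mid(\ell-k)$ and $2\ell\le km$ (then $a=(\ell-k)/(m-2)$ and $b=k-2a$). Call the set of such bigradings the support $S$. Define $\widetilde T\colon\Z\times\Z\to\Z$ to be $2m$ on the slots with $b=0$, $4m$ on the slots with $b\ge 1$, and $0$ off $S$. The content of the Observation is exactly that $\widetilde T=T$.

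Gu's recursion $T(k,\ell)=\max\{T(k-1,\ell-1),\,T(k-2,\ell-m)\}$ for $(k,\ell)\neq(0,0),(1,1)$, together with the convention that out-of-range values are $0$, determines $T$ uniquely by induction on $k+\ell$, since both arguments on the right have strictly smaller coordinate sum. Hence it suffices to check that $\widetilde T$ satisfies the same two base cases and the same recursion. The base cases are immediate: $(0,0)$ is the slot $(a,b)=(0,0)$, so $\widetilde T(0,0)=2m$, and $(1,1)$ is the slot $(a,b)=(0,1)$, so $\widetilde T(1,1)=4m$. For the recursive step the key observation is that $(k,\ell)\mapsto(k-1,\ell-1)$ carries the slot $(a,b)$ to the slot $(a,b-1)$, and $(k,\ell)\mapsto(k-2,\ell-m)$ carries it to the slot $(a-1,b)$; so on $S$ the recursion reads $\widetilde T_{a,b}=\max\{\widetilde T_{a,b-1},\widetilde T_{a-1,b}\}$, and one checks the few cases by the signs of $a$ and $b$. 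The point is that when $b=0$ the bigrading $(k-1,\ell-1)=(2a-1,ma-1)$ falls off $S$ --- here $m\ge 3$ is needed, since it makes $2(ma-1)>(2a-1)m$ --- and that when $a=0$ the bigrading $(k-2,\ell-m)=(b-2,b-m)$ falls off $S$, having $\ell-k=2-m<0$; in those cases the corresponding term is $0$, and the maximum comes out as $2m$ (when $b=0$) or $4m$ (when $b\ge 1$), matching $\widetilde T$. Finally, for $(k,\ell)\notin S$ one checks that $S$ is closed upward under the two shifts, i.e.\ $(k-1,\ell-1)\in S$ or $(k-2,\ell-m)\in S$ forces $(k,\ell)\in S$; this is a one-line check on the three (in)equalities defining $S$, using $m\ge 3$ for $2\ell\le km$. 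Consequently both terms on the right are $0$ and the recursion returns $0=\widetilde T(k,\ell)$ there.

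The main obstacle is purely organizational: keeping the two index conventions --- bigrading $(k,\ell)$ versus diagonal slot $(a,b)$ --- aligned, and not overlooking any of the boundary cases $a=0$, $b=0$, or the base slots $(0,0)$ and $(0,1)$. Once the two shift formulas above are in hand there is no real difficulty, and nothing beyond Gu's theorem is required.
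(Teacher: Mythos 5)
Your proposal is correct, and it is worth noting that the paper itself offers no proof of this Observation at all: it is stated as a reformulation of Gu's theorem ``pointed out by Hepworth and Willerton,'' so you are supplying a verification the paper omits. Your argument does exactly what is needed: the change of coordinates $(a,b)=(i-1,j-1)\mapsto(k,\ell)=(2a+b,\,ma+b)$ identifies the support $S$ as $\{k\ge 0,\ \ell\ge k,\ (m-2)\mid(\ell-k),\ 2\ell\le km\}$ (injectively, since $m\ge 3$ makes $a=(\ell-k)/(m-2)$ and $b=k-2a$ recoverable); the two shifts in Gu's recursion become $(a,b)\mapsto(a,b-1)$ and $(a,b)\mapsto(a-1,b)$; and uniqueness of the solution of the recursion follows by strong induction on $k+\ell$, since both recursive terms strictly decrease that sum. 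I checked the three boundary verifications you flag --- that $(2a-1,ma-1)$ leaves $S$ when $b=0$ because $2(ma-1)>(2a-1)m$ for $m>2$, that $(b-2,b-m)$ leaves $S$ when $a=0$ because $\ell-k=2-m<0$, and that $S$ is closed under the two forward shifts (so the recursion returns $0$ off $S$, which gives the ``arranged into diagonals'' claim) --- and all three hold. Combined with item (1) of Gu's theorem for torsion-freeness, this is a complete proof; the only thing I would add in a written version is the explicit remark that the two recursive terms have coordinate sums $k+\ell-2$ and $k+\ell-m-2$, which is what licenses the induction.
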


\begin{table}
    \begin{center}
    \begin{tabular}{ |c|c|c|c|c|c|c|c|c|c|c|c|c } 
    \hline
$\rk MH(C_8)$& 0 & 1 & 2 & 3 & 4 & 5 & 6 & 7 & 8 & 9 & 10\\
\hline
0& 8 & & & & & & & & & &\\ 
\hline
1& & 16 & & & & & & & & &\\
\hline
2& & & 16 & & & & & & & &\\
\hline
3& & & & 16 & & & & & & &\\
\hline
4& & & 8 & & 16 & & & & & &\\
\hline
5& & & & 16 & & 16 & & & & &\\
\hline
6& & & & & 16 & & 16 & & & &\\
\hline
7& & & & & & 16 & & 16 & & &\\
\hline
8& & & & & 8 & & 16 & & 16 & &\\
\hline
9& & & & & & 16 & & 16 & & 16 &\\
\hline
10& & & & & & & 16 & & 16 & & 16\\
\hline
\end{tabular}
    \end{center}
    \caption{The ranks of the torsion-free magnitude homology groups of the cycle graph $C_8$ \cite{hepworth2017categorifying}.}
    \label{C8Homology}
\end{table}

Applying the Mayer-Vietoris sequence (\ref{MVSequence}) and an analogous induction argument to that given in the proof of Theorem \ref{evenouterplanardiagonal} yields the following explicit formula for not only the first diagonal of magnitude homology, but all magnitude homology groups for a family of outerplanar graphs constructed from even cycles.

\begin{thm}\label{Evenouterplanarconjecture}
Fix a positive integer $m \ge 3$. Let $G$ be an outer planar graph with $S$ components $C_4$, and $R$ component cycles $C_{2m}$, constructed using edge-gluings only. The magnitude homology groups of $G$ are torsion-free and arrange themselves in diagonals: let $S^m_{i,j}$ denote the rank of the magnitude homology group of $G$ in the $j^{th}$ entry of the $i^{th}$ diagonal, that is, $S_{i,j}^m = \rank(\textnormal{MH}_{2(i-1) + (j-1), m(i-1) + (j-1)}(G))$. Then, the magnitude homology groups $\textnormal{MH}_{k,\ell}$ of $G$ are all trivial groups except for the groups the aforementioned diagonals, and these satisfy for $i > 1$
\begin{align*}
\rank(S_{1,j}^m) =
\left\{
\begin{array}{ll}
2mR + 4S - 2(R + S - 1) & j=1,\\
4mR +4jS - 2(R + S - 1) & j>1,
\end{array}
\right.
\end{align*}
\begin{align*}
\rank(S_{i,j}^m) =
\left\{
\begin{array}{ll}
2mR +4S - 2(R + S -1) & j=1,\\
4mR - 2(R + S - 1) & j>1.
\end{array}
\right.
\end{align*}

\end{thm}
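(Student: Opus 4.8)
The plan is to induct on the number of components $t=R+S$, peeling one component off at a time via the split Mayer--Vietoris sequence~(\ref{MVSequence}), in the spirit of the proof of Theorem~\ref{evenouterplanardiagonal}. The inputs are: Gu's theorem together with the subsequent Observation (which gives $\textnormal{MH}(C_{2m})$ explicitly --- torsion-free, supported on the diagonals $i\ge1$ with ranks $T^m_{i,1}=2m$ and $T^m_{i,j}=4m$ for $j\ge2$, and zero elsewhere), the Hepworth--Willerton computation $\textnormal{MH}_{k,k}(C_4)\cong\Z^{4+4k}$ (torsion-free, supported only on the main diagonal --- the nominal higher diagonals collapse there because $C_4$ has $m=2$), and Theorem~\ref{treehomology} for the single edges along which components are glued. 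Together these also settle the base case $t=1$.

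For the inductive step, relabel so that $H_t$ is a leaf of the gluing tree of $G$, and set $Y=H_t$, $X=H_1\star\cdots\star H_{t-1}$; then $X$ is again an outerplanar graph of the allowed type with $t-1$ components, and $e:=X\cap Y$ is a single edge, i.e.\ a copy of $K_2$. First I would check that $(G;Y,X)$ is a projecting decomposition: $G=X\cup Y$; the edge $e$ is convex in $G$ (any single edge is); and the even cycle $Y$ projects onto its edge $e$ by the Example. The Mayer--Vietoris theorem then gives a \emph{split} short exact sequence, hence an isomorphism
\[
\textnormal{MH}_{*,*}(G)\oplus\textnormal{MH}_{*,*}(e) \cong \textnormal{MH}_{*,*}(Y)\oplus\textnormal{MH}_{*,*}(X).
\]
This immediately propagates torsion-freeness --- $\textnormal{MH}_{*,*}(G)$ is a direct summand of a torsion-free group, using the base cases for $Y$ and the inductive hypothesis for $X$ --- and yields the rank identity $\rank\textnormal{MH}_{k,\ell}(G)=\rank\textnormal{MH}_{k,\ell}(Y)+\rank\textnormal{MH}_{k,\ell}(X)-\rank\textnormal{MH}_{k,\ell}(e)$ in every bigrading.

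It then remains to assemble these identities into closed form. Organizing by diagonal: $\textnormal{MH}_{*,*}(e)$ and $\textnormal{MH}_{*,*}(C_4)$ contribute only on the main diagonal, while $\textnormal{MH}_{*,*}(C_{2m})$ contributes on all diagonals $i\ge1$ and nowhere else, so the inductive hypothesis forces $\textnormal{MH}_{*,*}(G)$ to vanish off the listed diagonals; on each diagonal the rank is obtained by summing the contributions of the $R$ copies of $C_{2m}$ and the $S$ copies of $C_4$ and subtracting $2$ for each of the $R+S-1$ gluing edges (which reach only the main diagonal), with the counts $|V|=2mR+4S-2(R+S-1)$ and $|E|=2mR+4S-(R+S-1)$ tracked along the way.

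The heart of the matter, and the main obstacle, is this last bookkeeping, because the two allowed components play asymmetric roles: the $C_4$'s feed only the main diagonal, so the genuinely $j$-dependent growth there (the $4jS$ term) is entirely due to them, whereas the higher diagonals are built from the $C_{2m}$'s alone. One has to verify that diagonals indexed by distinct $(i,j)$ never overlap --- the relation $\ell-k=(m-2)(i-1)$ recovers $i$, and then $j$, from a bigrading $(k,\ell)$, and the hypothesis $m\ge3$ keeps every $C_{2m}$-diagonal off the main diagonal --- and that the induction never creates homology away from them. Separating the cases $(k,\ell)=(0,0)$, $j=1$, $j>1$, and $i=1$ versus $i>1$, and checking each resulting arithmetic identity, is where the argument needs genuine care rather than a mechanical turn of the Mayer--Vietoris crank.
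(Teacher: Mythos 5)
Your approach is the same as the paper's: the paper gives no detailed argument for this theorem beyond the remark that the Mayer--Vietoris sequence and an induction analogous to Theorem \ref{evenouterplanardiagonal} yield the formula, and your proposal fills in exactly that outline — peel a leaf component off the gluing tree, verify the projecting decomposition onto the shared edge, and sum ranks componentwise while subtracting the contributions of the $R+S-1$ gluing edges.

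One substantive point, though. Your (correct) observation that the $C_4$ components and the gluing edges have magnitude homology supported only on the main diagonal $k=\ell$, which for $m\ge 3$ meets the $i$-th diagonal only when $i=1$, means that for $i>1$ your bookkeeping yields $S^m_{i,1}=2mR$ and $S^m_{i,j}=4mR$ for $j>1$. This disagrees with the displayed formula, which retains the $+4S$ and $-2(R+S-1)$ corrections on the higher diagonals. For instance, for $C_6$ glued to $C_4$ along a single edge ($m=3$, $R=S=1$), the split Mayer--Vietoris sequence gives $\rank(\textnormal{MH}_{2,3})=6+0-0=6$, whereas the printed formula predicts $6+4-2=8$. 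The discrepancy lies in the statement as printed, not in your argument; but you should then record the corrected values you actually derive for $i>1$ rather than claiming to have obtained the displayed ones.
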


\section{Conjectures and future directions}\label{lastsec}

As a relatively new invariant of graphs, not much is yet known about the relationship between structural properties of graphs and their magnitude homology groups. One such relationship investigated here was a connection between the building blocks (component cycles) of certain outer planar graphs and the ranks of magnitude homology groups. In Theorem \ref{Evenouterplanarconjecture}, we computed the magnitude homology groups of a family of outer planar graphs built from even cycle graphs. Despite the fact that Gu has also computed the magnitude homology groups of the odd cycle graphs, we cannot extend Theorem \ref{Evenouterplanarconjecture} result to the analogous family built instead from odd cycle graphs, or combinations of even and odd cycle graphs. This is due to the fact that the odd cycle graphs do not project onto their edges, and this means we cannot apply the Mayer-Vietoris sequence (Hepworth and Willerton showed that the the "projecting" condition in the statement of the Mayer-Vietoris sequence is strictly necessary \cite{hepworth2017categorifying}). Moving forward, we will attempt to extend Gu's algebraic Morse theory approach to compute the magnitude homology of the remaining outer planar graphs. We would then, for example, be in a better position to determine whether magnitude homology detects outer planarity.

In pursuit of other connections between graphical structure and magnitude homology, computations performed in Python are an excellent source of data from which we might derive hints. Below, we highlight a few such computations for members of some families of graphs, and offer conjectures for their magnitude homology groups. Many families investigated in this manner over the course of the research for this thesis pertained to graphs obtained by "gluing" cycle graphs of various sizes along edges, collections of edges, or vertices, for example the square polyominos of Definition \ref{SqPolyDefn}. We end by displaying some computations and accompanying conjectures for a small sampling of such families.

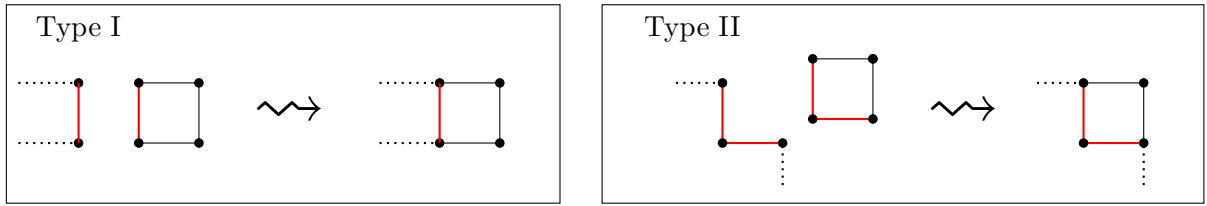
\begin{figure}[h]
\begin{center}
\begin{tikzpicture}[scale=.8]

\begin{scope}[xshift=-7.7cm]
\draw[dotted,thick] (0,5)--(1,5);
\draw[dotted,thick] (0,6)--(1,6);
\draw[fill=black] (1,5) circle (.7mm);
\draw[fill=black] (1,6) circle (.7mm);
\draw[red,thick] (1,5)--(1,6);

\draw[red,thick] (2,5)--(2,6);
\draw[fill=black] (2,5) circle (.7mm);
\draw[fill=black] (2,6) circle (.7mm);
\draw[fill=black] (3,5) circle (.7mm);
\draw[fill=black] (3,6) circle (.7mm);
\draw (2,5)--(3,5);
\draw (2,6)--(3,6);
\draw (3,5)--(3,6);

\node at (1,6.9){Type I};
\node[scale=2.5] at (4.5,5.5){$\rightsquigarrow$};

\draw[dotted,thick] (6,5)--(7,5);
\draw[dotted,thick] (6,6)--(7,6);
\draw[fill=black] (7,5) circle (.7mm);
\draw[fill=black] (7,6) circle (.7mm);
\draw[red,thick] (7,5)--(7,6);
\draw (7,5)--(8,5);
\draw(8,5)--(8,6);
\draw(8,6)--(7,6);
\draw[fill=black] (8,5) circle (.7mm);
\draw[fill=black] (8,6) circle (.7mm);

\draw (-.2,4)--(9,4)--(9,7.3)--(-0.2,7.3)--cycle;
\end{scope}
\begin{scope}[xshift=3cm, yshift=4.5cm]

\node at (.5,2.4){Type II};
\begin{scope}[yshift=-0.5cm]
\draw[dotted,thick] (2,0.3)--(2,1);
\draw[dotted,thick] (1,2)--(0.2,2);
\draw[red,thick] (2,1)--(1,1);
\draw[red,thick] (1,1)--(1,2);

\draw[red,thick] (2.5,1.4)--(3.5,1.4);
\draw[red,thick] (2.5,1.4)--(2.5,2.4);
\draw (3.5,1.4)--(3.5,2.4);
\draw (8,1)--(8,2);
\draw (2.5,2.4)--(3.5,2.4);
\draw[fill=black] (3.5,1.4) circle (.7mm);

\draw[fill=black] (2,1) circle (.7mm);
\draw[fill=black] (1,1) circle (.7mm);
\draw[fill=black] (1,2) circle (.7mm);
\draw[fill=black] (3.5,2.4) circle (.7mm);
\draw[fill=black] (2.5,1.4) circle (.7mm);
\draw[fill=black] (2.5,2.4) circle (.7mm);
\draw[fill=black] (3.5,2.4) circle (.7mm);
\node[scale=2.5] at (5,1.5){$\rightsquigarrow$};

\draw[dotted,thick] (8,0.3)--(8,1);
\draw[dotted,thick] (7,2)--(6.2,2);
\draw[red,thick] (8,1)--(7,1);
\draw[red,thick] (7,1)--(7,2);
\draw[fill=black] (8,1) circle (.7mm);
\draw[fill=black] (7,1) circle (.7mm);
\draw[fill=black] (7,2) circle (.7mm);
\draw (8,2)--(7,2);
\draw[fill=black] (8,2) circle (.7mm);
\end{scope}
\draw (9,-.5)--(9,2.8)--(-1,2.8)--(-1,-.5)--cycle;
\end{scope}
\end{tikzpicture}
\end{center}
\caption{Gluings of Type I and Type II.}
\label{SquarePolyominosConstructions}
\end{figure}

\begin{defn}\label{SqPolyDefn}
The set $G^4_S$ of square polyominos on $S$ copies of $C_4$ is defined inductively as follows. $G^4_1 = C_4$ and each member of $G^4_{S+1}$ is obtained from a member of $G^4_S$ by gluing a copy of $C_4$ via a move of Type I or Type II, shown in Figure \ref{SquarePolyominosConstructions}.
\end{defn}

\begin{figure}[h]

\begin{center}
\begin{tikzpicture}[scale=1]

\node at (3.5,.5){\usebox\squaregraph};
\node at (4.5,.5){\usebox\squaregraph};
\node at (5.5,.5){\usebox\squaregraph};
\node at (5.5,1.5){\usebox\squaregraph};
\node at (4.5,1.5){\usebox\squaregraph};

\node at (9.5,.5){\usebox\squaregraph};
\node at (10.5,.5){\usebox\squaregraph};
\node at (11.5,.5){\usebox\squaregraph};
\node at (10.5,2.5){\usebox\squaregraph};
\node at (10.5,1.5){\usebox\squaregraph};

\end{tikzpicture}
\end{center}
\caption{Two polyominos $P_1$ and $P_2$ of type $G_4^5$.}
\label{TwoPolyominos}
\end{figure}
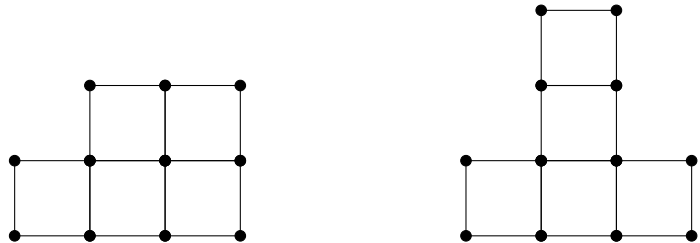

\begin{table}[H]
        \begin{minipage}{0.48\textwidth}
            \centering
            \begin{tabular}{|c|c|c|c|c|c|c|c|c| } 
            \hline
$P_1$& 0 & 1 & 2 & 3 & 4 & 5 & 6\\
\hline
0& 11 & & & & & &\\ 
\hline
1& & 30 & & & & &\\
\hline
2& & & 50 & & & &\\
\hline
3& & & & 70 & & &\\
\hline
4& & & & & 90 & &\\
\hline
5& & & & & & 110 &\\
\hline
6& & & & & & & 130\\
\hline
\end{tabular}
        \end{minipage}
        \hfill
        \begin{minipage}{0.48\textwidth}
            \centering
            \begin{tabular}{ |c|c|c|c|c|c|c|c|c| } 
            \hline
$P_2$& 0 & 1 & 2 & 3 & 4 & 5 & 6\\
\hline
0& 12 & & & & & &\\ 
\hline
1& & 32 & & & & &\\
\hline
2& & & 52 & & & &\\
\hline
3& & & & 72 & & &\\
\hline
4& & & & & 92 & &\\
\hline
5& & & & & & 112 &\\
\hline
6& & & & & & & 132\\
\hline
\end{tabular}
        \end{minipage}

\caption{The ranks of the magnitude homology computations for the two members of the set $G^4_5$ of polyominos given in Figure \ref{TwoPolyominos}.}
\label{fig:squarepolyominosexamples}
\end{table}

Based on the computations given in Table \ref{fig:squarepolyominosexamples} along with many others not displayed, we make the following conjecture.

\begin{conj}\label{Squarepolyominosconjecture} Let $S$ denote the number of  squares $C_4$ in the square polyomino $G^{4}_S.$ The main diagonal of the magnitude homology of a square polyomino is torsion-free and satisfies
\begin{align*}
\rank(\textnormal{MH}_{k,k}(G_S^4)) \cong
\left\{
\begin{array}{ll}
|V(G_S^4)| & k=0,\\
2|E(G_S^4)| + 4(i-1)S & k \ge 1.
\end{array}
\right.
\end{align*}
\end{conj}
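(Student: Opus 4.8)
The plan is to induct on the number $S$ of squares, running the split Mayer--Vietoris sequence (\ref{MVSequence}) exactly as in the proof of Theorem \ref{evenouterplanardiagonal}. Reading the index $i$ in the statement as the homological degree $k$, the target is $\rank(\textnormal{MH}_{k,k}(G_S^4)) = 2|E| + 4(k-1)S$ for $k\ge 1$ and $|V|$ for $k=0$. The base case $S=1$ is $G_1^4 = C_4$, where the Hepworth--Willerton computation $\textnormal{MH}_{k,k}(C_4)\cong\Z^{4+4k}$ gives $4+4k = 2\cdot 4 + 4(k-1)\cdot 1$ for $k\ge 1$. For the inductive step, write $G$ as the result of gluing a square $Y=C_4$ to some $X\in G_S^4$. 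If $(G;H_1,H_2)$ is a projecting decomposition for a suitable labelling $\{H_1,H_2\}=\{X,Y\}$, then (\ref{MVSequence}) splits to give $\textnormal{MH}_{*,*}(X)\oplus\textnormal{MH}_{*,*}(Y)\cong\textnormal{MH}_{*,*}(X\cap Y)\oplus\textnormal{MH}_{*,*}(G)$; since $Y=C_4$ and the intersection $X\cap Y$ (a tree) have torsion-free homology and $X$ does by induction, $\textnormal{MH}_{k,k}(G)$ is a direct summand of a torsion-free group and hence torsion-free, and the ranks add as in (\ref{MVrankequation}).

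For a Type I gluing the intersection $X\cap Y$ is a single edge $K_2$, convex in $G$, and $Y=C_4$ projects onto it because every even cycle projects onto any of its edges; taking $H_1=Y$ makes the decomposition projecting, precisely as in Theorem \ref{evenouterplanardiagonal}. With $\rank(\textnormal{MH}_{k,k}(K_2))=2$ for $k\ge 1$ from Theorem \ref{treehomology}, the base value for $C_4$, and $|E(G)|=|E(X)|+3$, equation (\ref{MVrankequation}) yields for $k\ge 1$
\begin{align*}
\rank(\textnormal{MH}_{k,k}(G)) &= \big(2|E(X)|+4(k-1)S\big) + (4+4k) - 2 \\
&= 2|E(G)| + 4(k-1)(S+1),
\end{align*}
while the $k=0$ term is $|V(X)|+2=|V(G)|$.

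The Type II move is where the argument departs from Theorem \ref{evenouterplanardiagonal}. Writing the new square as $Y=C_4$ with consecutive vertices $a,b,c,d$, a Type II gluing identifies the two edges $ab,bc$ with a boundary path of $X$, so $X\cap Y$ is the path $P_3$ on $a,b,c$. Crucially, $Y$ does \emph{not} project onto $X\cap Y$: the apex $d$ satisfies $d(d,a)=d(d,c)=1$ but $d(d,b)=2$, and no vertex of $\{a,b,c\}$ serves as a gate for $d$. We must therefore take $H_1=X$ and verify that $X$ projects onto $X\cap Y$. Granting this, Theorem \ref{treehomology} gives $\rank(\textnormal{MH}_{k,k}(P_3))=4$ for $k\ge 1$, and with $|E(G)|=|E(X)|+2$ equation (\ref{MVrankequation}) gives for $k\ge 1$
\begin{align*}
\rank(\textnormal{MH}_{k,k}(G)) &= \big(2|E(X)|+4(k-1)S\big) + (4+4k) - 4 \\
&= 2|E(G)| + 4(k-1)(S+1),
\end{align*}
and $|V(X)|+1=|V(G)|$ for $k=0$, completing the induction.

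The main obstacle is thus the lemma that $X$ projects onto the shared $L$-path in a Type II move, i.e.\ that this path is a \emph{gated} subgraph of $X$. This is geometrically natural: a Type II move fills an inner corner, so $X$ already contains the two squares flanking $ab$ and $bc$, and the expected gate for a vertex $x$ is the corner of the $L$ nearest $x$ in the grid. The difficulty is that square polyominos assembled by Type I and Type II moves can be globally intricate, and graph distance in a non-convex polyomino need not coincide with the ambient $\ell^1$ metric, so a distant vertex might reach $a$ and $c$ along incomparable geodesics and fail to have a gate. I would handle this by strengthening the induction hypothesis to assert, alongside the rank formula, that every boundary $L$-path bounding a fillable notch of a member of $G_S^4$ is gated, and then check that both gluing moves preserve this property. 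Proving this gatedness invariant is the real content; once it is in hand, the Mayer--Vietoris bookkeeping above turns the conjecture into a theorem.
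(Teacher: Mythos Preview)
The statement you are attempting to prove is labelled \emph{Conjecture}~\ref{Squarepolyominosconjecture} in the paper, and the paper does not prove it. Immediately after stating it the authors remark that for polyominos built using only Type~I moves the Mayer--Vietoris argument of Theorem~\ref{evenouterplanardiagonal} goes through---your Type~I computation is exactly this, with the Hepworth--Willerton value $\rank\textnormal{MH}_{k,k}(C_4)=4+4k$ inserted in place of Theorem~\ref{nosquaresortriangles}. They then write that ``we cannot appeal to the Mayer-Vietoris sequence in any simple manner when moves of type~II are used for the simple reason that $C_4$ does not project onto a pair of its adjacent edges.'' That sentence is the entirety of the paper's discussion of the Type~II case; there is no proof for you to be compared against.

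Your idea of swapping the roles and taking $H_1=X$ rather than $H_1=Y=C_4$ is not considered in the paper and is the genuinely new content of your proposal. The bookkeeping you do with it is correct: convexity of $P_3=\{a,b,c\}$ in $G$ follows from bipartiteness (so $d_G(a,c)=2$), the split sequence~(\ref{MVSequence}) then forces $\textnormal{MH}_{k,k}(G)$ to be a summand of a torsion-free group, and your rank identity $\big(2|E(X)|+4(k-1)S\big)+(4+4k)-4=2|E(G)|+4(k-1)(S+1)$ is right. But, as you yourself say, everything hinges on the unproved claim that $X$ projects onto the $L$-path, and you do not establish it. Your closing paragraph proposes to strengthen the induction to carry a gatedness invariant, but this is a plan, not a proof; note also that after a move new concave corners can appear, so the invariant you must propagate is not only about the corner just filled.

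It may help to record precisely what must be ruled out. Since every $G_S^4$ is bipartite, for any $x\in X$ one has $d(x,a),d(x,c)\in\{d(x,b)\pm 1\}$, and a gate for $x$ in $\{a,b,c\}$ exists in every case except
\[
d(x,a)=d(x,c)=d(x,b)-1.
\]
So your lemma is equivalent to: no vertex of $X$ is strictly closer to both ends of the $L$ than to its corner. In the ambient $\ell^1$ metric this would place $x$ in the quadrant of the notch, but graph distance in a non-convex polyomino need not agree with $\ell^1$, and the inductive definition of $G_S^4$ via gluings does not obviously preclude such a vertex arising relative to some later corner. Until that gatedness statement is either proved to be preserved by both moves or shown to fail, your argument remains a reduction of one open statement to another, and the conjecture stays open---exactly as the paper leaves it.
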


In other words, magnitude homology is counting the number of vertices, edges and squares.

By Theorem \ref{evenouterplanardiagonal}, note that Conjecture \ref{Squarepolyominosconjecture} holds true for polyominos constructed using exclusively moves of type I. However, we cannot appeal to the Mayer-Vietoris sequence in any simple manner when moves of type II are used for the simple reason that $C_4$ does not project onto a pair of its adjacent edges.

Based on the computations in Table \ref{fig:squareplustwotriangles} and others, we posit the following.

\begin{conj}
The magnitude homology of a graph obtained by gluing two cycle graphs $C_3$ along single edges to a single cycle graph $C_4$ has diagonal magnitude homology provided those triangles are not attached to opposite sides of the $4$-cycle.
\end{conj}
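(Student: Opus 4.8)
The plan is to prove the slightly stronger statement that such a $G$ has torsion-free magnitude homology concentrated on the main diagonal, by peeling the two triangles off one at a time with the Mayer--Vietoris sequence (\ref{MVSequence}) and reducing everything to the known magnitude homologies of $K_2$, $C_3$ and $C_4$. Fix notation: let the central square be $C_4 = abcd$. Since the symmetry group of $C_4$ acts transitively on its pairs of adjacent edges, we may assume the two triangles $T_1$ and $T_2$ are glued along the edges $ab$ and $bc$ respectively, with apexes $e$ (joined to $a$ and $b$) and $f$ (joined to $b$ and $c$); write $G_1 = C_4 \cup T_1$, so that $G = G_1 \cup T_2$.

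First I would compute $\MH_{*,*}(G_1)$. Consider the decomposition $(G_1; C_4, T_1)$; its intersection is the edge $ab \cong K_2$, which is convex in $G_1$, and $C_4$, being an even cycle, projects onto each of its edges, so $(G_1; C_4, T_1)$ is a projecting decomposition. By the splitting Mayer--Vietoris sequence, $\MH_{*,*}(G_1) \oplus \MH_{*,*}(K_2) \cong \MH_{*,*}(C_4) \oplus \MH_{*,*}(C_3)$. Each summand on the right is torsion-free and supported on the main diagonal --- by Theorem \ref{treehomology} for $K_2$, by Proposition \ref{C3homology} for $C_3$, and by the computation of Hepworth and Willerton \cite{hepworth2017categorifying} for $C_4$ --- so $\MH_{*,*}(G_1)$, being a direct summand of a finitely generated torsion-free diagonal bigraded group, is itself torsion-free and diagonal.

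The crux is the second step, peeling off $T_2$. Consider the decomposition $(G; G_1, T_2)$; its intersection is the edge $bc \cong K_2$, which is convex in $G$. The key claim is that $G_1$ projects onto $bc$: defining $\pi$ by $a, e \mapsto b$, $d \mapsto c$, $b \mapsto b$ and $c \mapsto c$, one checks the projection identity $d(x,h) = d(x,\pi(x)) + d(\pi(x),h)$ for $x \in V(G_1)$ and $h \in \{b,c\}$ case by case; the only nontrivial case is $x = e$, and it works precisely because $ab$ and $bc$ meet at $b$, giving $d(e,c) = d(e,b) + d(b,c) = 2$. This is exactly where the hypothesis enters: if $T_1$ and $T_2$ were glued to the opposite edges $ab$ and $cd$, then $d(e,c) = d(e,d) = 2$ while $d(c,d) = 1$, so no choice of $\pi(e) \in \{c,d\}$ satisfies the identity and this reduction collapses. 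Granting the claim, $(G; G_1, T_2)$ is a projecting decomposition, and the splitting Mayer--Vietoris sequence gives $\MH_{*,*}(G) \oplus \MH_{*,*}(K_2) \cong \MH_{*,*}(G_1) \oplus \MH_{*,*}(C_3)$. Both summands on the right are now known to be torsion-free and diagonal, hence so is $\MH_{*,*}(G)$, which is the assertion; rank additivity in the two split sequences moreover recovers $\rank \MH_{0,0}(G) = |V(G)|$ and $\rank \MH_{1,1}(G) = 2|E(G)|$.

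I expect the main obstacle to be this projection verification in the second step --- checking that $C_4$ with one triangle attached projects onto the far edge of the second triangle. It is the sole place the ``not on opposite sides'' hypothesis is used and the geometric heart of the argument; everything else is formal bookkeeping with the splitting Mayer--Vietoris sequence together with the catalogue of magnitude homologies of small graphs already assembled in the paper.
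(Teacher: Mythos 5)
Your argument is correct, and the key point to realize is that the paper offers \emph{no} proof of this statement: it is recorded only as a conjecture supported by the machine computations in Table \ref{fig:squareplustwotriangles}. Your two-step Mayer--Vietoris argument therefore upgrades it to a theorem. Both of your decompositions do satisfy the paper's definition of a projecting decomposition: in $(G_1;C_4,T_1)$ the even cycle $C_4$ projects onto the edge $ab$, and in $(G;G_1,T_2)$ the assignment $a,b,e\mapsto b$, $c,d\mapsto c$ is easily checked to be a projection of $G_1$ onto the single edge $bc$, the case $x=e$ working precisely because $d(e,c)=d(e,b)+d(b,c)=2$; for opposite edges no such $\pi(e)$ exists, which is consistent with the genuine off-diagonal entries the paper records for $Sq_2$. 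Note that the obstruction the authors cite for the polyomino conjecture --- that $C_4$ does not project onto a \emph{pair} of adjacent edges --- does not apply here, since each of your intersections is a single edge; this is presumably why the argument was overlooked. A strong consistency check: the split sequences give $\rank\MH_{k,k}(G)=(4+4k)+2\cdot 3\cdot 2^{k}-2\cdot 2=4k+6\cdot 2^{k}$ for all $k\ge 0$, which reproduces the $Sq_1$ column $6,16,32,60,112,212,408,796$ exactly. One small caveat: ``not attached to opposite sides'' literally also permits gluing both triangles to the \emph{same} edge of $C_4$, a configuration your symmetry reduction excludes and for which your second step fails ($C_4\cup T_1$ does not project onto the shared edge); you are proving the conjecture for the adjacent-distinct-edges configuration of Figure \ref{TwoTrianglesOneSquare}, which is clearly the intended reading, but you should say so explicitly.
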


\begin{conj}
Gluing any number of cycle graphs $C_3$ along single edges to a cycle graph $C_4$ results in a graph with diagonal magnitude homology provided no pair of triangles is glued to opposite faces of the cycle graph $C_4$.
\end{conj}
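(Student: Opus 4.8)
The plan is to strip the attached triangles off the central $4$-cycle with the Mayer--Vietoris sequence (\ref{MVSequence}), reducing the conjecture to a statement about one auxiliary graph, and then to attack that graph directly. The hypothesis that no pair of triangles is glued to opposite faces forces every attached triangle to lie on one of (at most) two \emph{adjacent} edges of the $C_4$; writing $C_4=abcd$, we may assume these are $ab$ and $bc$, so $G$ is obtained from $C_4$ by gluing a fan of $p\ge 0$ triangles along $ab$ and a fan of $q\ge 0$ triangles along $bc$. Let $B_n=K_{1,1,n}$ be the \emph{book graph}, i.e. $n$ triangles sharing a common edge. Put $X=C_4\cup(ab\text{-fan})$ and $Y=B_q$ (the $bc$-fan together with the edge $bc$). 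A direct check shows $X\cap Y$ is the single edge $bc$, that $bc$ is convex in $G$, and --- the crucial point --- that $X$ projects onto $bc$: every vertex of $X$ has a \emph{unique} closest point in $\{b,c\}$, because each fan vertex on $ab$ lies at distance $1$ from $b$ and $2$ from $c$, whereas it would be equidistant from the two ends of the edge it is glued to. Hence $(G;X,Y)$ is a projecting decomposition and (\ref{MVSequence}) yields a \emph{split} short exact sequence
\[
0 \to \textnormal{MH}_{*,*}(K_2) \to \textnormal{MH}_{*,*}(X)\oplus \textnormal{MH}_{*,*}(B_q) \to \textnormal{MH}_{*,*}(G) \to 0 .
\]
Decomposing $X$ in turn into the pieces $C_4$ and $B_p$ glued along $ab$ --- where one now only needs that the even cycle $C_4$ projects onto a single edge --- gives a second split sequence expressing $\textnormal{MH}_{*,*}(X)$ through $\textnormal{MH}_{*,*}(C_4)$, $\textnormal{MH}_{*,*}(K_2)$ and $\textnormal{MH}_{*,*}(B_p)$. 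Since $\textnormal{MH}(K_2)$ is diagonal and torsion-free (Theorem \ref{treehomology}) and $\textnormal{MH}(C_4)$ is diagonal and torsion-free \cite{hepworth2017categorifying}, the two split sequences make $\textnormal{MH}(G)$ a direct summand of $\textnormal{MH}(C_4)\oplus\textnormal{MH}(B_p)\oplus\textnormal{MH}(B_q)$, and it remains only to prove the following.

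\emph{Key Lemma.} For every $n\ge 1$ the magnitude homology of $B_n$ is supported on the main diagonal and is torsion-free. (For $n=1$ this is Proposition \ref{C3homology}.) To prove it I would fix the polynomial degree $\ell$ and study the length-$\ell$ complex $(\MC_{*,\ell}(B_n),\partial)$; since $B_n$ has diameter $2$ this complex is concentrated in homological degrees $0\le k\le\ell$, so it is enough to show it is \emph{exact in every degree $k<\ell$}. Granting this, $\textnormal{MH}_{k,\ell}(B_n)=0$ for $k\ne\ell$ (diagonality), while $\textnormal{MH}_{\ell,\ell}(B_n)=\ker\big(\partial\colon\MC_{\ell,\ell}(B_n)\to\MC_{\ell-1,\ell}(B_n)\big)$ is a subgroup of a free abelian group, hence free (torsion-freeness). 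The structural feature that should force the exactness is that the only distance-$2$ pairs in $B_n$ are pairs of ``page'' vertices $\{x_i,x_j\}$, every geodesic between them passes through one of the two ``spine'' vertices $a,b$, and inserting a spine vertex into such a step leaves the length unchanged ($d(x_i,x_j)=2=d(x_i,a)+d(a,x_j)$), so insertion/deletion of a spine vertex stays within a fixed $\ell$. The natural tool is an acyclic matching in the sense of algebraic Morse theory --- the method Gu used for cycle graphs \cite{gu2018graph} --- pairing a $k$-path that detours through a spine vertex sandwiched between two pages with the $(k-1)$-path obtained by deleting that spine vertex; if this matching can be made acyclic with all critical cells in degree $\ell$, the required exactness follows. (It may also help that $B_n$ is the cone over the tree $K_{1,n}$.)

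The main obstacle is exactly the construction of that acyclic matching. A naive contracting homotopy --- ``insert $a$ into the first distance-$2$ step'' --- does \emph{not} satisfy $\partial h+h\partial=\mathrm{id}$, because inserting a spine vertex can create, elsewhere along the path, a new sandwiched-spine pattern whose contribution to $\partial h$ has no partner in $h\partial$. The source of the trouble is that there are two spine vertices $a$ and $b$, each equally usable as a detour vertex, so the matching must break this symmetry (for instance by a lexicographic rule on positions together with the ordering $a\prec b$), after which one verifies that it contains no closed zig-zags and that its critical cells all lie in degree $\ell$. This is the kind of bookkeeping Gu carried out for $C_{2m}$, and adapting it to $B_n$ is, I expect, the only genuinely nontrivial step. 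Once the Key Lemma is established, the two split Mayer--Vietoris sequences exhibit $\textnormal{MH}(G)$ as a summand of a direct sum of diagonal torsion-free groups, so $\textnormal{MH}_{k,\ell}(G)=0$ whenever $k\ne\ell$; the hypothesis forbidding triangles on opposite faces is used precisely to keep all fans on two adjacent edges, which is what makes the first projecting decomposition legitimate.
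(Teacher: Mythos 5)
Your reduction is sound, and it is genuinely more than the paper offers: the paper states this result only as a conjecture supported by machine computations, with no proof. Your first step is correct --- any three edges of $C_4$ contain an opposite pair, so the hypothesis really does confine all attached triangles to (at most) two adjacent edges $ab$ and $bc$ --- and both projecting decompositions check out. The piece $X=C_4\cup(ab\text{-fan})$ does project onto the edge $bc$, since every vertex of $X$ has a unique closest endpoint of $bc$, and for a single-edge subgraph that is equivalent to the projection condition (if $d(x,b)=m$ is strictly smaller than $d(x,c)$, then $m<d(x,c)\le m+1$ forces $d(x,c)=d(x,b)+d(b,c)$); likewise $C_4$ projects onto $ab$. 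The split sequence (\ref{MVSequence}) therefore applies twice and exhibits $\textnormal{MH}_{*,*}(G)$ as a direct summand of $\textnormal{MH}_{*,*}(C_4)\oplus\textnormal{MH}_{*,*}(B_p)\oplus\textnormal{MH}_{*,*}(B_q)$, and a summand of a diagonal torsion-free bigraded group is diagonal and torsion-free. This also correctly sidesteps the obstruction the paper itself flags for polyominoes ($C_4$ failing to project onto a pair of adjacent edges), because you only ever peel a fan off along one edge at a time.

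The genuine gap is your Key Lemma: after the reduction, the entire content of the conjecture lives in the claim that $\textnormal{MH}_{k,\ell}(B_n)=0$ for $k\neq\ell$, and you do not prove it. You correctly diagnose that the naive contracting homotopy fails and that what is needed is an acyclic matching in the sense of algebraic Morse theory with all critical cells in degree $\ell$, but you stop at the expectation that Gu's bookkeeping for $C_{2m}$ can be adapted; no matching is written down, and no acyclicity or critical-cell verification is carried out. (The lemma is very plausible --- $B_n$ is the join of an edge with $n$ isolated vertices, small cases such as the diamond $B_2$ can be checked by hand, and diagonality for joins is the kind of statement one would hope to prove --- but nothing in this paper supplies it.) As it stands your argument is a correct and useful reduction of the conjecture to the diagonality of the book graphs $B_n$, not a proof; finishing the acyclic matching for $B_n$ would settle the conjecture.
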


\begin{figure}[H]

\begin{center}
\begin{tikzpicture}[scale=1]

\node at (1.5,.5){\usebox\squaregraph};
\draw[fill=black] (1.5,2) circle (.7mm);
\draw[fill=black] (3,.5) circle (.7mm);
\draw (1,1)--(1.5,2);
\draw (1.5,2)--(2,1);
\draw (2,0)--(3,.5);
\draw (3,.5)--(2,1);

\node at (6.5,.5){\usebox\squaregraph};
\draw[fill=black] (5,.5) circle (.7mm);
\draw[fill=black] (8,.5) circle (.7mm);
\draw (6,0)--(5,.5);
\draw (5,.5)--(6,1);
\draw (7,0)--(8,.5);
\draw (8,.5)--(7,1);

\end{tikzpicture}
\end{center}
\caption{Graphs $Sq_1$ and $Sq_2$ obtained by gluing two triangle graphs to a single square graph.}
\label{TwoTrianglesOneSquare}
\end{figure}
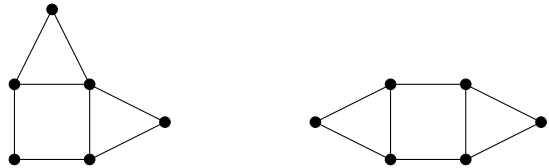

\begin{table}[H]
        \begin{minipage}{0.45\textwidth}
            \centering
            \begin{tabular}{ |c|c|c|c|c|c|c|c|c|c|c| } 
            \hline
$Sq_1$& 0 & 1 & 2 & 3 & 4 & 5 & 6 & 7\\
\hline
0& 6 & & & & & & &\\ 
\hline
1& & 16 & & & & & &\\
\hline
2& & & 32 & & & & &\\
\hline
3& & & & 60 & & & &\\
\hline
4& & & & & 112 & & &\\
\hline
5& & & & & & 212 & &\\
\hline
6& & & & & & & 408 &\\
\hline
7& & & & & & & & 796\\
\hline
\end{tabular}
        \end{minipage}
        \hfill
        \begin{minipage}{0.45\textwidth}
            \centering
            \begin{tabular}{ |c|c|c|c|c|c|c|c|c|c|c| } 
            \hline
$Sq_2$& 0 & 1 & 2 & 3 & 4 & 5 & 6 & 7\\
\hline
0& 6 & & & & & & &\\ 
\hline
1& & 16 & & & & & &\\
\hline
2& & & 32 & & & & &\\
\hline
3& & & 2 & 60 & & & &\\
\hline
4& & & & 12 & 112 & & &\\
\hline
5& & & & & 44 & 212 & &\\
\hline
6& & & & & 2 & 132 & 408 &\\
\hline
7& & & & & & 16 & 356 & 796\\
\hline
\end{tabular}
        \end{minipage}
\caption{The ranks of the magnitude homology of two graphs $Sq_1$ and $Sq_2$ given in Figure \ref{TwoTrianglesOneSquare}.}
\label{fig:squareplustwotriangles}
\end{table}

Based on the computations in Table \ref{fig:wheelsfiveandeight} we conjecture the following.

\begin{conj}
Wheel graphs $W_n$ on $n$ vertices have diagonal magnitude homology which is torsion-free and satisfies
\begin{align*}
\rank(\textnormal{MH}_{k,k}(W_n)) \cong
\left\{
\begin{array}{ll}
|V(W_n)| & k=0,\\
2|E(W_n)| \cdot 3^{k-1} & k \ge 1.
\end{array}
\right.
\end{align*}
\end{conj}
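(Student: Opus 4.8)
\medskip
\noindent\emph{Proposed approach to the Wheel Conjecture.} The plan splits into two essentially independent parts: compute the magnitude $\#W_n(q)$, and prove that $\MH(W_n)$ is concentrated on the main diagonal. Once both are in hand, the stated ranks come out of the categorification identity by comparing coefficients.

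\emph{The magnitude.} Because $\mathrm{Aut}(W_n)$ fixes the hub $h$ and permutes the $n-1$ rim vertices transitively, the (unique) weighting $w = Z_{W_n}(q)^{-1}\mathbf 1$ has $w_h = \alpha$ and $w_v = \beta$ for every rim vertex $v$. A rim vertex sits at distance $0$ from itself, $1$ from its two rim neighbours and from $h$, and $2$ from the other $n-4$ rim vertices, so $Z_{W_n}(q)\,w = \mathbf 1$ reduces to
\[
\alpha + (n-1)q\beta = 1, \qquad q\alpha + \big(1 + 2q + (n-4)q^2\big)\beta = 1.
\]
Eliminating $\alpha$ and using $1 + 2q - 3q^2 = (1+3q)(1-q)$ gives $\beta = (1+3q)^{-1}$ and $\alpha = \big(1-(n-4)q\big)(1+3q)^{-1}$, hence $\#W_n(q) = \alpha + (n-1)\beta = \dfrac{n-(n-4)q}{1+3q}$. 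Expanding, $[q^0]\#W_n(q) = n$ while $[q^k]\#W_n(q) = (-1)^k\,4(n-1)\,3^{k-1}$ for $k \ge 1$.

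\emph{Diagonality.} The real work is to show $\MH_{k,\ell}(W_n) = 0$ whenever $k < \ell$. For $n=4$ this is trivial ($W_4 = K_4$ has all nonzero distances $1$, so $\MC_{k,\ell}(K_4) = 0$ unless $\ell = k$), so assume $n \ge 5$. I would run algebraic Morse theory on the fixed-length complexes $\MC_{*,\ell}(W_n)$, following the template of Gu's computation for cycle graphs \cite{gu2018graph}. The matching should use the dominating hub: a length-$2$ step, which necessarily joins two non-adjacent rim vertices $u,v$, can be ``expanded'' into $u\to h\to v$, and conversely an $h$ flanked by two non-adjacent rim vertices can be ``contracted''; scanning each tuple from the left and performing the first admissible expansion-or-contraction is the natural candidate. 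The task is then to arrange the left-to-right bookkeeping so the matching is acyclic and to check that every critical cell has homological degree equal to its length, that is, lies on the diagonal; that forces $\MH_{k,\ell}(W_n) = 0$ for $k < \ell$. Morally this should work because the only geodesics between two distance-$2$ vertices of $W_n$ are $u\to h\to v$ and (when $u,v$ are at rim-distance exactly $2$) $u\to w\to v$, and these are swapped by the diamond on $\{u,h,v,w\}$, so the long induced rim cycle contributes no geodesic holes; an alternative route worth trying is the long exact sequence of the subcomplex of hub-avoiding chains.

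\emph{Putting it together.} Granting the vanishing of the off-diagonal groups, the rest is bookkeeping. The homology is automatically torsion-free: off the diagonal it is zero, and on the diagonal $\MH_{k,k}(G) = \ker\big(\partial\colon\MC_{k,k}(G)\to\MC_{k-1,k}(G)\big)$ is a subgroup of the free abelian group $\MC_{k,k}(G)$ --- since $\MC_{k+1,k}(G)=0$ --- hence free. Diagonality also yields $\chi_q(\MC(W_n)) = \sum_{k\ge0}(-1)^k\rank\MH_{k,k}(W_n)\,q^k$, and this equals $\#W_n(q)$ by the categorification identity (lines~(\ref{line2})--(\ref{line3})). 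Matching coefficients with the magnitude above gives $\rank\MH_{0,0}(W_n) = n = |V(W_n)|$ and $\rank\MH_{k,k}(W_n) = 4(n-1)\,3^{k-1} = 2|E(W_n)|\cdot 3^{k-1}$ for $k \ge 1$, as claimed. The one genuinely delicate point I foresee is constructing the acyclic matching of the previous paragraph and pinning down its critical cells; this is where the hub-and-rim combinatorics must be exploited carefully, and it is a more involved version of the matchings Gu built for the cycle graphs.
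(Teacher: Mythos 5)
You should first note that this statement appears in the paper only as a \emph{conjecture}, supported by machine computations (Table \ref{fig:wheelsfiveandeight}); the authors give no proof, so there is no argument of theirs to compare yours against. Your proposal does make real progress on the part of the problem that is mere bookkeeping. The weighting computation is correct: the distance profile of a rim vertex is as you state, the factorization $1+2q-3q^2=(1+3q)(1-q)$ gives $\beta=(1+3q)^{-1}$, and $\#W_n(q)=\bigl(n-(n-4)q\bigr)/(1+3q)$ expands to $[q^k]\#W_n(q)=(-1)^k\,4(n-1)\,3^{k-1}=(-1)^k\,2|E(W_n)|\,3^{k-1}$ for $k\ge1$, matching the conjectured ranks. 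Your two structural observations are also sound: given diagonality, the categorification identity forces the ranks coefficient by coefficient, and the diagonal groups are automatically free since $\MC_{k+1,k}(W_n)=0$ makes $\MH_{k,k}$ a subgroup of the free group $\MC_{k,k}$. This correctly isolates the entire content of the conjecture in the single assertion $\MH_{k,\ell}(W_n)=0$ for $k<\ell$.

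That assertion is exactly where your argument stops being a proof. The Morse-theoretic paragraph is a plan, not an argument: no matching is actually defined (you do not resolve the tie between expanding a distance-$2$ step $u\to v$ through the hub versus through a common rim neighbour, nor the treatment of occurrences of $h$ flanked by \emph{adjacent} rim vertices, where contraction changes the length and so is not a legal pairing), acyclicity is never checked, and the critical cells are not identified, so nothing rules out off-diagonal critical cells. Since diagonality is precisely the point the authors could not establish --- it is why the statement is a conjecture rather than a theorem --- your proposal proves the claim only conditionally on its hardest step. If you want to close the gap, the most economical route is probably not to build a matching from scratch but to check whether $W_n$ falls under (or can be reduced to) the class of diameter-two graphs that Gu \cite{gu2018graph} already proves to be diagonal by exactly this kind of matching; alternatively, your suggested comparison with the subcomplex of hub-avoiding chains would need to be turned into an actual long exact sequence argument with the quotient complex identified explicitly.
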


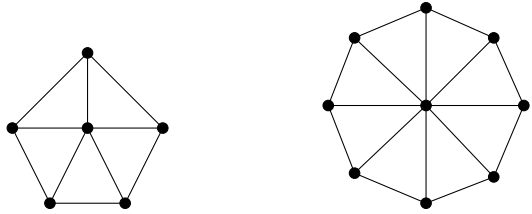
\begin{figure}[H]
\begin{center}
\begin{tikzpicture}[scale=1]

\draw[fill=black] (1,0) circle (.7mm);
\draw[fill=black] (2,0) circle (.7mm);
\draw[fill=black] (.5,1) circle (.7mm);
\draw[fill=black] (2.5,1) circle (.7mm);
\draw[fill=black] (1.5,2) circle (.7mm);
\draw[fill=black] (1.5,1) circle (.7mm);

\draw (1,0)--(2,0);
\draw (2,0)--(2.5,1);
\draw (2.5,1)--(1.5,2);
\draw (1.5,2)--(.5,1);
\draw (.5,1)--(1,0);

\draw (1.5,1)--(1,0);
\draw (1.5,1)--(2,0);
\draw (1.5,1)--(2.5,1);
\draw (1.5,1)--(1.5,2);
\draw (1.5,1)--(.5,1);

\draw[fill=black] (6,1.3) circle (.7mm);
\draw[fill=black] (6,0) circle (.7mm);
\draw[fill=black] (6,2.6) circle (.7mm);
\draw[fill=black] (7.3,1.3) circle (.7mm);
\draw[fill=black] (4.7,1.3) circle (.7mm);
\draw[fill=black] (6.9,2.2) circle (.7mm);
\draw[fill=black] (6.9,.35) circle (.7mm);
\draw[fill=black] (5.05,.4) circle (.7mm);
\draw[fill=black] (5.05,2.2) circle (.7mm);

\draw (6,0)--(6.9,.35);
\draw (6.9,.35)--(7.3,1.3);
\draw (7.3,1.3)--(6.9,2.2);
\draw (6.9,2.2)--(6,2.6);
\draw (6,2.6)--(5.05,2.2);
\draw (5.05,2.2)--(4.7,1.3);
\draw (4.7,1.3)--(5.05,.4);
\draw (5.05,.4)--(6,0);

\draw (6,1.3)--(6,0);
\draw (6,1.3)--(6.9,.35);
\draw (6,1.3)--(7.3,1.3);
\draw (6,1.3)--(6.9,2.2);
\draw (6,1.3)--(6,2.6);
\draw (6,1.3)--(5.05,2.2);
\draw (6,1.3)--(4.7,1.3);
\draw (6,1.3)--(5.05,.4);

\end{tikzpicture}
\end{center}
\caption{Wheel graphs $W_5$ and $W_8$.}
\end{figure}

\begin{table}[H]
        \begin{minipage}{0.45\textwidth}
            \centering
            \begin{tabular}{ |c|c|c|c|c|c|c|c|c| } 
            \hline
$W_5$& 0 & 1 & 2 & 3 & 4 & 5 & 6\\
\hline
0& 6 & & & & & &\\ 
\hline
1& & 20 & & & & &\\
\hline
2& & & 60 & & & &\\
\hline
3& & & & 180 & & &\\
\hline
4& & & & & 540 & &\\
\hline
5& & & & & & 1620 &\\
\hline
6& & & & & & & 4860\\
\hline
\end{tabular}
        \end{minipage}
        \hfill
        \begin{minipage}{0.45\textwidth}
            \centering
\begin{tabular}{|c|c|c|c|c|c|c|c|c|}
\hline
$W_6$& 0 & 1 & 2 & 3 & 4 & 5 & 6\\
\hline
0& 9 & & & & & &\\ 
\hline
1& & 32 & & & & &\\
\hline
2& & & 96 & & & &\\
\hline
3& & & & 288 & & &\\
\hline
4& & & & & 864 & &\\
\hline
5& & & & & & 2592 &\\
\hline
6& & & & & & & 7776\\
\hline
\end{tabular}
\end{minipage}
\caption{The ranks of the magnitude homology of the wheel graphs $W_5$ and $W_8$.}
\label{fig:wheelsfiveandeight}
\end{table}

During the course of research, we came to suspect a relationship between pairwise geodesic counts and permissible types of torsion in magnitude homology. By a geodesic in a graph $G$, we mean a $k$-path $(x_0,x_1,\ldots,x_k)$ in satisfying $\ell(x_0,x_1,\ldots,x_k) = d(x_0,x_k)$. In other words, a geodesic is a path of shortest length connecting two vertices. We suspect that the quantity $g(G) = \max_{x,y \in V{G}}g(x,y)$, where $g(x,y)$ is the number of geodesics connecting $x$ to $y$, plays a role in determining possible types of torsion in the magnitude homology of the graph $G$. Although this has not yet come to fruition, we will investigate this further in future work.

\clearpage
\bibliographystyle {amsalpha}
\bibliography{bib.bib}

\providecommand{\bysame}{\leavevmode\hbox to3em{\hrulefill}\thinspace}
\providecommand{\MR}{\relax\ifhmode\unskip\space\fi MR }
\providecommand{\MRhref}[2]{%
  \href{http://www.ams.org/mathscinet-getitem?mr=#1}{#2}
}
\providecommand{\href}[2]{#2}
\begin{thebibliography}{Wac07}

\bibitem[Gu18]{gu2018graph}
Yuzhou Gu, \emph{Graph magnitude homology via algebraic morse theory}, arXiv
  preprint arXiv:1809.07240 (2018).

\bibitem[Hep18]{hepworth2018magnitude}
Richard Hepworth, \emph{Magnitude cohomology}, arXiv preprint arXiv:1807.06832
  (2018).

\bibitem[HW17]{hepworth2017categorifying}
Richard Hepworth and Simon Willerton, \emph{Categorifying the magnitude of a
  graph}, Homology, Homotopy and Applications \textbf{19} (2017), no.~2,
  31--60.

\bibitem[KY18]{YoshinagaKaneta2018}
Ryuki Kaneta and Masahiko Yoshinaga, \emph{Magnitude homology of metric spaces
  and order complexes}, arXiv preprint arXiv:1803.04247 (2018).

\bibitem[Lei08]{Leinster2008EulerChar}
Tom Leinster, \emph{The {E}uler characteristic of a category}, Documenta
  Mathematica \textbf{13} (2008), 21--49.

\bibitem[Lei13]{leinster2013magnitudematricspaces}
\bysame, \emph{The magnitude of metric spaces}, Documenta Mathematica
  \textbf{18} (2013), 857--905.

\bibitem[Lei19]{leinster2019magnitude}
\bysame, \emph{The magnitude of a graph}, 2019, pp.~247--264.

\bibitem[Li07]{li2007fundamental}
Chenchuan Li, \emph{On the fundamental group of a generalized lens space}.

\bibitem[LW13]{Leinster2013Asymptotic}
Tom Leinster and Simon Willerton, \emph{On the asymptotic magnitude of subsets
  of euclidean space}, Geometriae Dedicata \textbf{164} (2013), 287--310.

\bibitem[Pac91]{Pachner1991Moves}
U.~Pachner, \emph{P.l. homeomorphic manifolds are equivalent by elementary
  shellings}, European Journal of Combinatorics \textbf{12} (1991), no.~2,
  129--145.

\bibitem[RT18]{rubinstein2018generalized}
J~Hyam Rubinstein and Stephan Tillmann, \emph{Generalized trisections in all
  dimensions}, Proceedings of the National Academy of Sciences \textbf{115}
  (2018), no.~43, 10908--10913.

\bibitem[Wac07]{Wachs2006PosetsOrderComplexes}
Michelle~L Wachs, \emph{Poset topology: tools and applications, “{G}eometric
  {C}ombinatorics”, {IAS}/{P}ark {C}ity {M}athematics {S}eries, vol. 13},
  Amer. Math. Soc, 2007.

\end{thebibliography}

\end{document}